% SIAM Article Template
\documentclass[final]{siamart1116}

% Information that is shared between the article and the supplement
% (title and author information, macros, packages, etc.) goes into
% ex_shared.tex. If there is no supplement, this file can be included
% directly.

%\input{ex_shared}
% Packages and macros go here
\usepackage{lipsum}
\usepackage{amsfonts}
\usepackage{graphicx}
\usepackage{epstopdf}
\usepackage[noend]{algorithmic}
\usepackage{bm}
\usepackage{fourier}
\usepackage{mathtools}
% For adding notes and comments to text:
\usepackage[colorinlistoftodos]{todonotes}
\usepackage{changes} % For track-changes
\definechangesauthor[name={Elvar}, color=red]{ekb}
% Add new text: \added[id=ekb,remark={added this because}]{new text}
% Add new text: \deleted[id=ekb,remark=removed this because]{removed text}
% Replace text: \replaced[id=ekb]{new text}{old text}
\ifpdf
  \DeclareGraphicsExtensions{.eps,.pdf,.png,.jpg}
\else
  \DeclareGraphicsExtensions{.eps}
\fi

%strongly recommended
\numberwithin{theorem}{section}
% Also number the equations by section:
\numberwithin{equation}{section}

% Declare title and authors, without \thanks
\newcommand{\TheTitle}{Pass-Efficient Randomized Algorithms for Low-Rank Matrix Approximation Using Any Number of Views} 
% Replace \TheTitle by shorter version if full title is too long for running header:
\newcommand{\ShortTitle}{Pass-Efficient Randomized Algorithms for Low-Rank Matrix Approximation}  
\newcommand{\TheAuthors}{Elvar K. Bjarkason}

% Sets running headers as well as PDF title and authors
\headers{\ShortTitle}{\TheAuthors}
% \headers{\TheTitle}{\TheAuthors}

% Title. If the supplement option is on, then "Supplementary Material"
% is automatically inserted before the title.
\title{{\TheTitle}\thanks{Date: May 24, 2018.
\funding{This work was supported by the NZ Ministry of Business, Innovation and Employment through grant C05X1306, as well as an AUEA Braithwaite-Thompson Graduate Research Award and a scholarship from Landsbankinn.}}}

% Authors: full names plus addresses.
\author{
  Elvar K. Bjarkason\thanks{Department of Engineering Science, University of Auckland, Auckland, New Zealand (\email{ebja558@aucklanduni.ac.nz}).}
}

\usepackage{amsopn}
% New math operations declared here:
\DeclareMathOperator{\diag}{diag} % diagonal matrix
\DeclareMathOperator{\qr}{qr} % QR decomposition
\DeclareMathOperator{\orth}{orth} % orthogonalization
 % SVD
\DeclareMathOperator{\tsvd}{tsvd} % TSVD
\DeclareMathOperator{\tevd}{tevd} % TSVD
\DeclareMathOperator{\randn}{randn} % Random Gaussian matrix
\DeclareMathOperator{\chol}{chol} % Cholesky factorization
\DeclareMathOperator{\range}{range} % Range of a matrix
 % Number of nonzeros in a matrix
\DeclareMathOperator{\QrQc}{constructQrOrQc} % Function that construct an approximate orthonormal basis for range or co-range of the given input matrix 
% New commands declared here:
\newcommand{\norm}[1]{\left\lVert#1\right\rVert} % 2- norm / spectral norm

%%% Local Variables: 
%%% mode:latex
%%% TeX-master: "ex_article"
%%% End: 

% Optional PDF information
\ifpdf
\hypersetup{
  pdftitle={\TheTitle},
  pdfauthor={\TheAuthors}
}
\fi

% The next statement enables references to information in the
% supplement. See the xr-hyperref package for details.

%\externaldocument{ex_supplement}

% FundRef data to be entered by SIAM
%<funding-group>
%<award-group>
%<funding-source>
%<named-content content-type="funder-name"> 
%</named-content> 
%<named-content content-type="funder-identifier"> 
%</named-content>
%</funding-source>
%<award-id> </award-id>
%</award-group>
%</funding-group>

\begin{document}

\maketitle

% REQUIRED
\begin{abstract}
This paper describes practical randomized algorithms for low-rank matrix approximation that accommodate any budget for the number of views of the matrix. The presented algorithms, which are aimed at being as pass efficient as needed, expand and improve on popular randomized algorithms targeting efficient low-rank reconstructions. First, a more flexible subspace iteration algorithm is presented that works for any views $v \geq 2$, instead of only allowing an even $v$. Secondly, we propose more general and more accurate single-pass algorithms. In particular, we propose a more accurate memory efficient single-pass method and a more general single-pass algorithm which, unlike previous methods, does not require prior information to assure near peak performance. Thirdly, combining ideas from subspace and single-pass algorithms, we present a more pass-efficient randomized block Krylov algorithm, which can achieve a desired accuracy using considerably fewer views than that needed by a subspace or previously studied block Krylov methods. However, the proposed accuracy enhanced block Krylov method is restricted to large matrices that are either accessed a few columns or rows at a time. Recommendations are also given on how to apply the subspace and block Krylov algorithms when estimating either the dominant left or right singular subspace of a matrix, or when estimating a normal matrix, such as those appearing in inverse problems. Computational experiments are carried out that demonstrate the applicability and effectiveness of the presented algorithms.
\end{abstract}

% REQUIRED
\begin{keywords}
Singular value decomposition, SVD, randomized, low rank, subspace iteration, power iteration, single pass, streaming models, block Krylov
\end{keywords}

% REQUIRED
\begin{AMS}
  65F30, 68W20, 15A18, 35R30, 86A22
\end{AMS}

\section{Introduction}

Low-rank matrix approximation methods are important tools for improving computational performance when dealing with large data sets and highly parameterized computational models. Recent years have seen a surge in randomized methods for low-rank matrix approximation which are designed for high-performance computing on modern computer architectures and have provable accuracy bounds \cite{halko2011structure,mahoney2011,martinsson2016rsvd,liberty2007,martinsson2006,martinsson2011,rokhlin2009,tropp2017,woodruff2014,woolfe2008}. The main computational tasks of modern randomized matrix approximation algorithms involve multiplying the data matrix under consideration with a small number of tall, thin matrices. Since accessing a large matrix is expensive, limiting the number of times it is multiplied with another matrix is key to computational efficiency. With this in mind, randomized algorithms have been devised that only need to view the data matrix once \cite{halko2011structure,martinsson2006,tropp2017,woodruff2014,woolfe2008}. This is unlike classical matrix factorization algorithms which typically involve a serial process where the data matrix is multiplied numerous times with a vector. Randomized methods are therefore more suitable for high-performance computing than classical algorithms.

Randomized algorithms have quickly gained popularity because of their ability to accelerate many expensive linear algebra tasks. Another important reason is that many randomized algorithms are easy to implement, often requiring only a few lines of code in their most basic form. This study presents improved randomized algorithms for estimating a low-rank factorization of a matrix using an arbitrary number of views or passes over the information contained in the matrix. The algorithms presented here follow the trend of being simple and easy to implement. The focus of the study is on factorizing matrices based on truncated singular value decompositions. However, the ideas presented can also be extended to other matrix factorization methods. Though, the present study focuses on matrices with real value entries, the algorithms can be extended to the complex case like other related algorithms \cite{halko2011structure,tropp2017}.

First, we consider generalizing a popular randomized subspace iteration algorithm which uses an even number of matrix views. The more practical subspace iteration algorithm presented here works for any number of views $v \geq 2$. Next, to accommodate applications that can only afford a single matrix view, we expand upon and improve current state-of-the-art randomized single-pass methods. Finally, using ideas from our generalized subspace iteration method and single-pass methods, we present modified and pass-efficient randomized block Krylov methods. We discuss how the block Krylov approach can be made even more efficient when dealing with large matrices that, because of their size, are accessed a few rows at a time. These are all practical methods aimed at generating approximate, but high-quality, low-rank factorizations of large matrices. We also discuss subtle differences between applying randomized algorithms to a given matrix ${\bm J}$ or its transpose ${\bm J}^*$. This can be of interest when trying to estimate dominant singular subspaces or approximating so-called normal matrices, such as those appearing in inverse problems.

\subsection{Motivation}

The primary motivation for this study was to develop algorithms to speed up inverse methods used to estimate parameters in models describing subsurface flow in geothermal reservoirs \cite{osullivan2001state,bjarkason2017randomized}. Inverting models describing complex geophysical processes, such as fluid flow in the subsurface, frequently involves matching a large data set using highly parameterized computational models. Running the model commonly involves solving an expensive and nonlinear forward problem. Despite the possible nonlinearity of the forward problem, the link between the model parameters and simulated observations is often described in terms of a Jacobian matrix ${\bm J} \in \mathbb{R}^{N_d \times N_m}$, which locally linearizes the relationship between the parameters and observations. The size of ${\bm J}$ is therefore determined by the (large) parameter and observation spaces. In this case, explicitly forming ${\bm J}$ is out of the question since at best it involves solving $N_m$ direct problems (linearized forward simulations) or $N_d$ adjoint problems (linearized backward simulations) \cite{carrera2005,hinze2009,oliver2008,rodrigues2006adjoint}. Nevertheless, the information contained in ${\bm J}$ can be helpful for the purpose of inverting the model using nonlinear inversion methods such as a Gauss-Newton or Levenberg-Marquardt approach, and for quantifying uncertainty. 

Using adjoint simulation, direct simulation and randomized algorithms, the necessary information can be extracted from ${\bm J}$ without ever explicitly forming the large matrix ${\bm J}$. Bjarkason et al. \cite{bjarkason2017randomized} showed that inversion of a nonlinear geothermal reservoir model can be accelerated by using randomized low-rank methods coupled with adjoint and direct methods. Bjarkason et al. \cite{bjarkason2017randomized} presented a modified Levenberg-Marquardt approach which, at each inversion iteration, updates model parameters based on an approximate truncated singular value decomposition (TSVD) of ${\bm J}$. The TSVD of ${\bm J}$ was approximated using either a 1-view or 2-view randomized method. This involves evaluating ${\bm J}$ times a thin matrix and ${\bm J}^*$ times a thin matrix at every iteration. ${\bm J}$ times a thin matrix ${\bm H}$ is evaluated efficiently by solving a direct problem (linearized forward solve) for each column in ${\bm H}$. However, the advantage of the method is that each of these direct problems can be solved simultaneously. Similarly, ${\bm J}^*$ times a thin matrix is evaluated efficiently using an adjoint method (solving linearized backward problems). The advantage of using a 1-view method over a 2-view one is that all the direct and adjoint problems can be solved simultaneously in parallel. However, the 1-view method gives less accurate results when the randomized sampling is comparable in size to that used by the 2-view approach.

In \cite{bjarkason2017randomized}, randomized 1-view and 2-view methods were chosen since those methods use the fewest matrix accesses possible. However, in some cases it may be necessary to use more accurate low-rank approximation methods. This may be the case if the singular spectrum of ${\bm J}$ does not decay rapidly enough for the 1-view or 2-view methods to be suitably accurate. Then a more accurate randomized power or subspace iteration method \cite{erichson2017,gu2015,halko2011structure,rokhlin2009} can be used. Another option is to consider pass-efficient randomized block Krylov methods \cite{drineas2017,halko2011pca,martinsson2010normalized,musco2015krylov,rokhlin2009}. Standard randomized subspace iteration or block Krylov methods use $2(q + 1)$ views to form a low-rank approximation, where $q$ is the number of iterations. However, this leaves out the option of choosing an odd number of views. A more desirable algorithm would allow the user to specify a budget of $v$ views, which could be either odd or even. This is especially important for nonlinear inverse or uncertainty quantification problems, as the matrix views dominate the computational cost and $4$ or $6$ views could be considered too costly.

\subsection{Contributions and outline of the paper}

Inspired by the similarities between a subspace iteration method presented in the 1990's by \cite{vogel1994}, which uses an odd number of views $v \geq 3$, and modern randomized subspace iteration methods, we have developed a more general subspace iteration algorithm, see \cref{alg:gensubit}, which works for any number of views $v \geq 2$. \cref{alg:gensubit} returns an estimated rank-$p$ TSVD of the input matrix ${\bm A}$, given a target rank $p$ and number of views $v \geq 2$. The basic idea behind \cref{alg:gensubit} is that each additional application of the matrix ${\bm A}$ improves the randomized approximation. The expected gains in accuracy for each additional view are described by \cref{thm:powit,thm:genpowit}. These theorems show that the absolute gain in accuracy achieved by using an additional view is expected to decline exponentially with the number of views. Therefore, stopping at an odd number of matrix views may be sufficient and desirable to lower cost. For an even number of views, \cref{alg:gensubit} is equivalent to using a standard subspace iteration method. For an odd number of views it is similar to a modified randomized power iteration scheme proposed by \cite[Sect. 4.6]{rokhlin2009}, the difference being that \cref{alg:gensubit} applies a subspace iteration approach and this gives flexibility in terms of the number of matrix views.

Section \ref{sec:standardsubspace} briefly reviews the state-of-the-art for standard subspace iteration methods. Section \ref{sec:gensubspace} presents the more general subspace iteration method and gives theoretical bounds for the accuracy of the method. Section \ref{sec:apply2normalmat} discusses the difference between applying \cref{alg:gensubit} to a matrix ${\bm J}$ or ${\bm J}^*$, both from a perspective of computational cost and accuracy. By considering the low-rank approximation of a normal matrix ${\bm J}^* {\bm J}$, section \ref{sec:link2nystrompinched} also gives insight into the relationship between \cref{alg:gensubit} and algorithms designed for low-rank approximation of positive-semidefinite matrices. Applying \cref{alg:gensubit} one way can be shown to be algebraically equivalent to applying a Nystr\"{o}m type method \cite{gittens2016revisiting,halko2011structure} to ${\bm J}^* {\bm J}$, see section \ref{sec:link2nystrompinched}. Another way of using \cref{alg:gensubit} is algebraically equivalent to applying a pinched sketching method \cite{gittens2016revisiting,halko2011structure} to ${\bm J}^* {\bm J}$, see also section \ref{sec:link2nystrompinched}.

Section \ref{sec:lstsqinversion} outlines how low-rank approximations can be used to solve the type of nonlinear inverse problem motivating this study. Three variants for approximating Levenberg-Marquardt updates are discussed as well as the consequences of choosing certain combinations of model update strategies and low-rank approximation schemes.

To address applications that may only admit a budget of one view, section \ref{sec:1view} presents modified and improved randomized 1-view algorithms. The 1-view algorithms are mainly based on state-of-the-art algorithms proposed and discussed by \cite{tropp2017} but also draw upon algorithms suggested by \cite{woolfe2008,halko2011structure,martinsson2016rsvd,boutsidis2016,upadhyay2016,yu2017single}. Randomized 1-view algorithms are based on the idea that information gathered by simultaneously sketching parts of the column and row spaces of a matrix can suffice to form a reasonable low-rank approximation. 

The general 1-view algorithm proposed and recommended by \cite[Alg. 7]{tropp2017} has the drawback that the sketches for the column and row spaces cannot be chosen to have the same size without sacrificing robustness. Section \ref{sec:1ivewmodified} presents modified 1-view algorithms that enable the user to choose the same sketch size for the column and row sampling, without sacrificing robustness. We also discuss how these modified 1-view methods open up the possibility of post-processing the results from the matrix sketching to optimize the accuracy of the low-rank approximation. For the test matrices we considered, our modified algorithms and a simple post-processing scheme gives better overall performance than using the 1-view algorithm and sampling schemes recommended by \cite{tropp2017}. Additionally, based on the work of \cite{boutsidis2016,tropp2017,upadhyay2016}, section \ref{sec:1viewextended} provides a new and improved version of an extended 1-view algorithm, which for some applications may perform the best out of the presented 1-view methods in terms of memory use.

Section \ref{sec:blockKrylovMethods} outlines improved and more pass-efficient randomized block Krylov algorithms. The algorithms are especially aimed at problems where the multiplication with the data matrix and its transpose are expensive. This can be the case when dealing with large matrices stored out-of-core or Jacobian matrices appearing in large-scale inverse problems. Like the generalized subspace iteration method, we propose a Krylov algorithm that works for $v \geq 2$. We also suggest improvements for problems dealing with large matrices which are accessed a few rows at a time from out-of-core memory. In this case the Krylov approach can be up to twice as pass-efficient as previously proposed block Krylov methods \cite{drineas2010lstsq,halko2011structure,martinsson2010normalized,musco2015krylov,rokhlin2009}, by simultaneously sketching the column and row spaces of the input matrix whenever the matrix is viewed.

Section \ref{sec:results} gives experimental results to demonstrate the accuracy of the presented randomized algorithms and to support some of the claims made in previous sections. The randomized algorithms presented in this study have been coded using Python. The experimental Python code is available at https://github.com/ebjarkason/RandomSVDanyViews and includes code for the experiments considered in section \ref{sec:results}. Some of the Python algorithms were designed with adjoint and direct methods in mind. That is, they allow the user to specify functions for evaluating the action of the matrix of interest and its transpose on other matrices.

\section{Preliminaries}

Most of the notation used in the following sections follows standard practices. However, some of the notation needs clarifying. Methods are considered for finding a low-rank matrix that accurately approximates ${\bm A}$ in some sense. We use the spectral norm, denoted by $\norm{ \cdot }$, and the Frobenius norm, denoted by $\norm{ \cdot }_\text{F}$, to quantify the accuracy of the low-rank approximations.

This study focuses on approximation methods by way of singular value decomposition (SVD) of thin, low-rank matrices. The SVD factorization of a matrix ${\bm A} \in \mathbb{R}^{n_r \times n_c}$ can be written as
\begin{equation}
	{\bm A} = {\bm U} {\bm \Lambda} {\bm V}^* = \sum_{i=1}^N \lambda_i {\bm u}_i {\bm v}_i^*   ,
\end{equation}
where $N = \min(n_r , n_c)$. The matrices ${\bm U} = [ {\bm u}_1 \,  {\bm u}_2 \, \dots \, {\bm u}_N ]$ and ${\bm V} = [ {\bm v}_1 \,  {\bm v}_2 \, \dots \, {\bm v}_N ]$ have orthonormal columns, where ${\bm u}_i$ and ${\bm v}_i$ are the left and right singular vectors belonging to the $i$th singular value $\lambda_i$. The singular values $\lambda_1 \geq \lambda_2  \geq \dots \geq \lambda_N \geq 0$ are contained in the diagonal matrix ${\bm \Lambda} = \text{diag} [ \lambda_1 \, ,  \lambda_2 \, , \, \dots \, , \lambda_N ]$. We let ${\bm X}^*$ denote the conjugate transpose of the matrix ${\bm X}$. Almost all of the following discussion pertains to real valued matrices, and in that case the conjugate transpose is the same as the transpose.

For a matrix $\hat{\bm A}$ that is a rank-$p$ approximate TSVD of ${\bm A}$ we use
\begin{equation}
	\hat{\bm A} =\sum_{i=1}^p \lambda_i {\bm u}_i {\bm v}_i^*  = {\bm U}_p {\bm \Lambda}_p {\bm V}_p^*   .
\end{equation} 
Here $\lambda_i$, ${\bm u}_i$ and ${\bm v}_i$ can denote either the approximate or exact $i$th singular value and vectors of ${\bm A}$. It should be clear from the context which is being used. When using the exact values then $\hat{\bm A}$ is the optimal rank-$p$ approximation (in terms of the spectral and Frobenius norms), which is denoted by $\llbracket {\bm A} \rrbracket_p$. $[{{\bm U}}_p,\, {\bm \Lambda}_p,\, {\bm V}_p] = \tsvd({\bm A},\, p)$ is used to denote a function returning the SVD of $\llbracket {\bm A} \rrbracket_p$, i.e. the exact rank-$p$ TSVD of ${\bm A}$. Similarly, when $n_r = n_c$ and ${\bm A}$ is Hermitian positive-semidefinite then $[{\bm \Lambda}_p,\, {\bm V}_p] = \tevd({\bm A},\, p)$ is used to denote finding the eigenvalue decomposition of $\llbracket {\bm A} \rrbracket_p = {\bm V}_p {\bm \Lambda}_p {\bm V}_p^*$.

In what follows, the QR decomposition of ${\bm A} \in \mathbb{R}^{n_r \times n_c}$, with $n_r \geq n_c$, is defined as ${\bm Q} {\bm R} := {\bm A}$, where ${\bm Q}$ is an $n_r \times n_c$ matrix with orthonormal columns and ${\bm R}$ is an $n_c \times n_c$ upper triangular matrix. Finding such a thin or economic QR-factorization is expressed by $[{\bm Q},\, {\bm R}] = \qr({\bm A})$. Similarly, an $n_r \times r$ matrix ${\bm Q}$ which has orthonormal columns which form a basis for the range of a matrix ${\bm A}$ of rank $r$ is denoted by ${\bm Q} = \orth ({\bm A})$. In the presented algorithms we use $\randn(m, \, n)$ to denote an $m \times n$ Gaussian random matrix. For a Hermitian positive-definite matrix ${\bm A}$ we use ${\bm C}_L = \chol({\bm A},\, \mathit{LOWER})$ to express finding a lower-triangular Cholesky matrix ${\bm C}_L$ such that ${\bm A} = {\bm C}_L {\bm C}_L^*$.

\section{Standard randomized subspace iteration} \label{sec:standardsubspace}

A low-rank approximation of an $n_r \times n_c$ matrix ${\bm A}$ can be found by applying a simple two-stage randomized procedure \cite{erichson2017,halko2011structure,martinsson2016rsvd}. The first stage is a randomized range finder and the second stage uses the approximate range of ${\bm A}$ to construct the desired low-rank approximation.
\begin{enumerate}
\item (\textit{Randomized stage}) Find an $n_r \times k$ matrix ${\bm Q}_c$ whose columns are an approximate orthonormal basis for the range (column space) of ${\bm A}$, such that ${\bm A} \approx {\bm Q}_c {\bm Q}_c^* {\bm A}$.
\item (\textit{Deterministic stage}) Given the orthonormal matrix ${\bm Q}_c$, constructed during the first stage, evaluate ${\bm B} = {\bm Q}_c^* {\bm A}$. Then ${\bm Q}_c {\bm B}$, is a rank-$k$ approximation of ${\bm A}$. This approximate QB-decomposition can be used to construct other approximate low-rank factorizations of ${\bm A}$, such as the TSVD. 
\end{enumerate}

\begin{algorithm}[b!]
\caption{Randomized SVD using standard subspace iteration.} \label{alg:subit}
\begin{flushleft}
\textbf{INPUT:} Matrix ${\bm A} \in {\mathbb{R}}^{n_r \times n_c}$, integers $p > 0$, $l \geq 0$ and $q \geq 0$.\\
\textbf{RETURNS:} Approximate rank-$p$ SVD, ${\bm U}_p {\bm \Lambda}_p {\bm V}_p^*$, of ${\bm A}$.
\end{flushleft}
\begin{algorithmic}[1]
\STATE{${\bm \Omega}_r = \randn(n_c,\, p+l)$.}
\STATE{$[{\bm Q}_c,\, \sim] = \qr({\bm A} {\bm \Omega}_r)$.}
\FOR{$j = 1$ \textbf{to} $q$}
	\STATE{$[{\bm Q}_r,\, \sim] = \qr({\bm A}^* {\bm Q}_c)$.}
    \STATE{$[{\bm Q}_c,\, \sim] = \qr({\bm A} {\bm Q}_r)$.}
\ENDFOR
\STATE{$[{\bm Q}_r,\, {\bm R}_r] = \qr({\bm A}^* {\bm Q}_c)$.}
\STATE{$[\hat{{\bm V}}_p,\, {\bm \Lambda}_p,\, \hat{\bm U}_p] = \tsvd({\bm R}_r,\, p)$.}
\STATE{${\bm U}_p = {\bm Q}_c \hat{\bm U}_p$ and ${\bm V}_p = {\bm Q}_r \hat{\bm V}_p$.}
\end{algorithmic}
\end{algorithm}

The matrix ${\bm Q}_c$ can be generated cheaply by accessing or viewing the matrix ${\bm A}$ only once. Simply form a random matrix ${\bm \Omega}_r \in {\mathbb{R}}^{n_c \times k}$ and evaluate ${\bm Y}_c = {\bm A} {\bm \Omega}_r$, which sketches the range of ${\bm A}$. For the present study each element of the sampling matrix ${\bm \Omega}_r$ is drawn independently at random from a standard Gaussian distribution. However, other types of randomized sampling matrices can also be considered \cite{erichson2017,halko2011structure,szlam2014,tropp2017}. For robust outcomes $k$ should be chosen larger than the desired target rank $p$. That is, choose $k = p + l$, where $l$ is an oversampling factor required for increased accuracy (e.g., $l=10$ \cite{gu2015,halko2011structure,martinsson2016rsvd}). 

Subsequently, QR-decomposition can, for instance, be used to find the orthonormal matrix ${\bm Q}_c$, i.e. ${\bm Y}_c = {\bm Q}_c {\bm R}_c$. With this basis in hand, we can form the thin matrix ${\bm B}$ in Stage 2 by one additional view of ${\bm A}$. Taking the SVD of ${\bm B} = \hat{\bm U}_k {\bm \Lambda}_k {\bm V}_k^*$ and evaluating ${\bm U}_k = {\bm Q}_c \hat{\bm U}_k$, then ${\bm U}_k {\bm \Lambda}_k {\bm V}_k^*$ is an approximate rank-$k$ TSVD of ${\bm A}$. For the desired rank-$p$ TSVD, the $l$ smallest singular values of ${\bm B}$, which are estimated least accurately, can be truncated away to give ${\bm A} \approx {\bm Q}_c \llbracket {\bm B} \rrbracket_p = {\bm U}_p {\bm \Lambda}_p {\bm V}_p^*$. This gives the basic randomized SVD algorithm \cite{gu2015,martinsson2016rsvd}, which is a 2-view method.

The basic randomized 2-view method works well when the singular spectrum decays rapidly. However, it may lack accuracy for some applications. In that case power or subspace iteration can be used to improve the randomized approach \cite{erichson2017,gu2015,halko2011structure,rokhlin2009}. The key idea is that classic power iteration can be used to find an improved orthonormal basis ${\bm Q}_c$ during Stage 1 of the random procedure. Using $q$ power iterations an orthonormal matrix ${\bm Q}_c$ is found using $({\bm A} {\bm A}^*)^q {\bm A} {\bm \Omega}_r$ instead of ${\bm A} {\bm \Omega}_r$. A drawback is that for each iteration the matrix ${\bm A}$ is accessed two additional times. The spectral norm error of the randomized approximation ${\bm A} \approx {\bm Q}_c {\bm Q}_c^* {\bm A}$ is expected to improve exponentially with $q$ \cite{halko2011structure,rokhlin2009}, see the following \cref{thm:powit} from \cite{halko2011structure}.

\begin{theorem}[Average spectral error for the power scheme \cite{halko2011structure}]\label{thm:powit}
Let ${\bm A} \in {\mathbb{R}}^{n_r \times n_c}$ with nonnegative singular values $\lambda_1 \geq \lambda_2 \geq \cdots$, let $p \geq 2$ be the target rank and let $l \geq 2$ be an oversampling parameter, with $p + l \leq \min (n_r, n_c)$. Draw a Gaussian random matrix ${\bm \Omega}_r \in {\mathbb{R}}^{n_c \times (p+l)}$ and set ${\bm Y}_c = ({\bm A} {\bm A}^*)^q {\bm A} {\bm \Omega}_r$ for an integer $q \geq 0$. Let ${\bm Q}_c \in {\mathbb{R}}^{n_r \times (p+l)}$ be an orthonormal matrix which forms a basis for the range of $\,{\bm Y}_c$. Then
\begin{displaymath}
    \mathbb{E} \left[ \norm{ {\bm A} - {\bm Q}_c {\bm Q}_c^* {\bm A} } \right] 
    \leq
    \left[ 
    \left( 1 + \sqrt{\frac{p}{l-1}} \right) \lambda_{p+1}^{2q+1} +
    \frac{\mathrm{e} \sqrt{p+l}}{l} 
    \left( \sum_{j>p} \lambda_j^{2(2q+1)} \right)^{1/2}
    \right]^{1/(2q+1)}
    \, .
\end{displaymath}
\end{theorem}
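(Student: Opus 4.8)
The plan is to reduce the powered range finder to the basic (no-iteration) case through a purely deterministic norm inequality, apply the known average-case spectral bound for the basic randomized range finder to the powered matrix, and then pull the expectation through a concave power by Jensen's inequality. Write $\bm{B} = (\bm{A}\bm{A}^*)^q \bm{A}$, so that $\bm{Y}_c = \bm{B}\bm{\Omega}_r$ and $\bm{Q}_c$ spans $\range(\bm{B}\bm{\Omega}_r)$; let $\bm{P} = \bm{Q}_c\bm{Q}_c^*$ denote the associated orthogonal projector, so that $\norm{\bm{A} - \bm{Q}_c\bm{Q}_c^*\bm{A}} = \norm{(\bm{I}-\bm{P})\bm{A}}$.

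First I would establish the deterministic ``power trick'': for any orthogonal projector $\bm{P}$ and any integer $q \geq 0$,
\begin{displaymath}
\norm{(\bm{I} - \bm{P})\bm{A}} \leq \norm{(\bm{I} - \bm{P})\bm{B}}^{1/(2q+1)} .
\end{displaymath}
Setting $\bm{C} = \bm{I} - \bm{P}$ and $\bm{G} = \bm{A}\bm{A}^*$, repeated use of $\norm{\bm{M}}^2 = \norm{\bm{M}\bm{M}^*}$ gives $\norm{\bm{C}\bm{A}}^2 = \norm{\bm{C}\bm{G}\bm{C}}$ and $\norm{\bm{C}\bm{B}}^2 = \norm{\bm{C}\bm{G}^{2q+1}\bm{C}}$, so the claim reduces to the positive-semidefinite inequality $\norm{\bm{C}\bm{G}\bm{C}}^{2q+1} \leq \norm{\bm{C}\bm{G}^{2q+1}\bm{C}}$. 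The latter follows from a power-mean (Jensen) inequality applied to the spectral measure of $\bm{G}$ restricted to $\range(\bm{C})$: for each unit vector $\bm{x}$ one has $(\bm{x}^*\bm{C}\bm{G}\bm{C}\bm{x})^{2q+1} \leq \bm{x}^*\bm{C}\bm{G}^{2q+1}\bm{C}\bm{x}$ by convexity of $t \mapsto t^{2q+1}$, and maximizing over $\bm{x}$ gives the norm bound. This is exactly Proposition 8.6 of \cite{halko2011structure}, which I would cite rather than reprove. The crucial point is that the inequality holds pathwise for the specific random projector $\bm{P}$, since $\bm{Q}_c$ is by definition a basis for $\range(\bm{B}\bm{\Omega}_r)$ and hence $\bm{P}$ is determined by the draw of $\bm{\Omega}_r$.

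Next I would apply the average-case spectral error bound for the basic (single-sketch) randomized range finder to the matrix $\bm{B}$. The key observation is that $\bm{B}$ shares the left singular vectors of $\bm{A}$ but has singular values $\lambda_j^{2q+1}$; hence $\bm{Y}_c = \bm{B}\bm{\Omega}_r$ is precisely a basic Gaussian sketch of $\bm{B}$ with the same $\bm{\Omega}_r$, and $\bm{Q}_c$ is the corresponding range basis. The basic average spectral bound (Theorem 10.6 of \cite{halko2011structure}), after substituting the singular values of $\bm{B}$, then yields
\begin{displaymath}
\mathbb{E}\left[ \norm{(\bm{I} - \bm{P})\bm{B}} \right] \leq \left(1 + \sqrt{\tfrac{p}{l-1}}\right) \lambda_{p+1}^{2q+1} + \frac{\mathrm{e}\sqrt{p+l}}{l}\left( \sum_{j>p} \lambda_j^{2(2q+1)} \right)^{1/2} .
\end{displaymath}

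Finally I would take expectations in the deterministic inequality and invoke Jensen: since $x \mapsto x^{1/(2q+1)}$ is concave on $[0,\infty)$, $\mathbb{E}[\norm{(\bm{I}-\bm{P})\bm{B}}^{1/(2q+1)}] \leq (\mathbb{E}[\norm{(\bm{I}-\bm{P})\bm{B}}])^{1/(2q+1)}$, and chaining the three displays gives the claimed bound. The main obstacle is neither the power trick nor the Jensen step, both of which are short, but the basic average spectral bound itself: it packages substantial Gaussian random-matrix analysis — a deterministic bound on the sketching error in terms of the pseudoinverse of the ``interior'' Gaussian block, together with sharp expectations of the spectral and Frobenius norms of Gaussian matrices and their pseudoinverses, combined through H\"{o}lder's inequality. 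Since the statement is quoted from \cite{halko2011structure}, I would treat that bound as the cited input and present only the reduction above.
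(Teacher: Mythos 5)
Your proposal is correct and follows essentially the same route as the source of this quoted theorem and as this paper's own proof of the analogous half-power result (\cref{thm:genpowit} in \cref{sec:proof}): a deterministic power-trick inequality reducing the error for ${\bm A}$ to that for the powered matrix (the analogue of \cref{thm:halfpowaid}, resting on \cite[Prop. 8.6]{halko2011structure}), the basic average spectral bound \cite[Thm. 10.6]{halko2011structure} applied to a matrix with singular values $\lambda_j^{2q+1}$, and a H\"older/Jensen step to pull the expectation through the concave root. No gaps; the argument is the intended one.
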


In floating point arithmetic, subspace iteration is a numerically more robust extension of the power iteration scheme. Subspace iteration is typically recommended for this reason, while it is equivalent to power iteration in exact arithmetic.
\cref{alg:subit} gives a pseudocode for a standard randomized subspace iteration method for estimating a TSVD of a matrix. \cref{alg:subit} is based on algorithms proposed by \cite{voronin2016}. During the subspace iteration, renormalization is performed using QR-decomposition after each application of the input matrix ${\bm A}$ or its transpose. In \cref{alg:subit} the TSVD of ${\bm B} = {\bm Q}_c^* {\bm A}$ is found by QR-decomposition of its transpose ${\bm B}^* = {\bm Q}_r {\bm R}_r$. Taking the TSVD of the small $(p + l) \times (p + l) $ matrix ${\bm R}_r$ then $\llbracket {\bm B} \rrbracket_p = \llbracket {\bm R}_r^* \rrbracket_p {\bm Q}_r^*$.

Computational efficiency can be improved by using LU-decomposition instead of QR-decomposition to renormalize after each of the first $q$ applications of ${\bm A}$ and the first $q$ applications of ${\bm A}^*$ \cite{erichson2017,szlam2014}. Another option is to apply a subspace/power iteration hybrid and skip some of the QR-factorizations \cite{szlam2014,voronin2016}. These options may sacrifice some accuracy for improved efficiency. The algorithms presented here use QR decomposition after each application of ${\bm A}$ or ${\bm A}^*$ during the subspace iteration, because the main topic of the present study concerns the number of times the matrix ${\bm A}$ is viewed.

The standard subspace iteration method, discussed in this section, always views the input matrix ${\bm A}$ an even $2(q+1)$ times. To suit any given budget of matrix views, a randomized low-rank approximation algorithm that works for any positive integer number of matrix views is appealing. The following section presents a more general randomized subspace iteration algorithm, which gives an approximate TSVD of a matrix for any number of views greater than one. Section \ref{sec:1view} discusses algorithms for low-rank approximation of a matrix when the budget is only one matrix view.

\section{Generalized randomized subspace iteration} \label{sec:gensubspace}

The basic idea of the generalized subspace iteration method is that the standard subspace iteration process can be halted halfway through an iteration to give an orthonormal matrix ${\bm Q}_r$ which approximates the co-range (row space) of the matrix of interest ${\bm A}$. In terms of the power iteration process this equates to evaluating the co-range sketch ${\bm Y}_r = ({\bm A}^* {\bm A})^q {\bm \Omega}_r$ for $q \geq 1$, which is like performing $q-1/2$ power iterations. Then, with an orthonormal basis ${\bm Q}_r$ for the range of ${\bm Y}_r$, the matrix ${\bm A}$ can be approximated as ${\bm A} \approx {\bm A} {\bm Q}_r {\bm Q}_r^*$. This approach gives a rank-$p$ approximation $\llbracket {\bm A} \rrbracket_p \approx \llbracket {\bm A} {\bm Q}_r \rrbracket_p {\bm Q}_r^*$ using $2q + 1$ views.

\begin{algorithm}[b!]
\caption{Randomized SVD using generalized subspace iteration.} \label{alg:gensubit}
\begin{flushleft}
\textbf{INPUT:} Matrix ${\bm A} \in {\mathbb{R}}^{n_r \times n_c}$, integers $p > 0$, $l \geq 0$ and $v \geq 2$.\\
\textbf{RETURNS:} Approximate rank-$p$ SVD, ${\bm U}_p {\bm \Lambda}_p {\bm V}_p^*$, of ${\bm A}$.
\end{flushleft}
\begin{algorithmic}[1]
\STATE{${\bm Q}_r = \randn(n_c,\, p+l)$.} \label{line:subiterRandn}
\FOR{$j = 1$ \textbf{to} $v$}
	\IF{$j$ is odd}
    	\STATE{$[{\bm Q}_c,\, {\bm R}_c] = \qr({\bm A} {\bm Q}_r)$.} \label{line:contentiousOddViews}
    \ELSE
        \STATE{$[{\bm Q}_r,\, {\bm R}_r] = \qr({\bm A}^* {\bm Q}_c)$.}
     \ENDIF
\ENDFOR
\IF{$v$ is even}
	\STATE{$[\hat{{\bm V}}_p,\, {\bm \Lambda}_p,\, \hat{\bm U}_p] = \tsvd({\bm R}_r,\, p)$.}
\ELSE
	\STATE{$[\hat{{\bm U}}_p,\, {\bm \Lambda}_p,\, \hat{\bm V}_p] = \tsvd({\bm R}_c,\, p)$.}
\ENDIF
\STATE{${\bm U}_p = {\bm Q}_c \hat{\bm U}_p$ and ${\bm V}_p = {\bm Q}_r \hat{\bm V}_p$.}
\end{algorithmic}
\end{algorithm}

The generalized subspace iteration algorithm is given by \cref{alg:gensubit}. Given a budget of $v \geq 2$ views, \cref{alg:gensubit} returns an approximate TSVD ${\bm U}_p {\bm \Lambda}_p {\bm V}_p^*$ of ${\bm A}$ by one of the following approaches:
\begin{enumerate}
\item (\textit{If $v$ is even}) Find an orthonormal matrix ${\bm Q}_c$ whose columns form a basis for the range of $({\bm A} {\bm A}^*)^{(v - 2)/2} {\bm A} {\bm \Omega}_r$. Then find the rank-$p$ TSVD ${\bm V}_p {\bm \Lambda}_p \hat{\bm U}_p^*$ of ${\bm A}^* {\bm Q}_c$ and set ${\bm U}_p = {\bm Q}_c \hat{\bm U}_p$.
\item (\textit{If $v$ is odd}) Find an orthonormal matrix ${\bm Q}_r$ whose columns form a basis for the range of $({\bm A}^* {\bm A})^{(v - 1)/2} {\bm \Omega}_r$. Then find the rank-$p$ TSVD ${\bm U}_p {\bm \Lambda}_p \hat{\bm V}_p^*$ of ${\bm A} {\bm Q}_r$ and set ${\bm V}_p = {\bm Q}_r \hat{\bm V}_p$.
\end{enumerate}
For an even number of matrix views \cref{alg:gensubit} simply proceeds by applying the standard subspace iteration method outlined in the previous section. This is easy to verify by comparing \cref{alg:subit} and \cref{alg:gensubit} for $v = 2(q + 1)$. \cref{alg:gensubit} is a slight modification of \cref{alg:subit} to allow an odd number of matrix views. For an odd number of matrix views \cref{alg:gensubit} is similar to and uses the same number of matrix views as a modified power iteration method proposed by \cite[Sect. 4.6]{rokhlin2009}. The main difference is that the algorithm presented here uses the subspace iteration framework to give a practical algorithm that works for any budget of matrix views $v \geq 2$.

The generalized subspace iteration algorithm presented here was motivated by a subspace iteration algorithm proposed by \cite{vogel1994} for estimating a TSVD of a rectangular matrix using an odd number of matrix views. The subspace iteration method presented by \cite{vogel1994} has all the ingredients needed for a modern randomized subspace iteration algorithm and only minor modifications are needed to arrive at the subspace iteration methods presented here. Similarities between the subspace iteration method of \cite{vogel1994} and a modern randomized 2-view method are discussed in \cite{bjarkason2017randomized}.

\subsection{Accuracy of \cref{alg:gensubit}}

For an even number of views, \cref{alg:gensubit} equates to using the standard subspace iteration method. Then the expected approximation error is given by \cref{thm:powit}. For an odd number of matrix views (greater than one) the expected approximation error of \cref{alg:gensubit} is given by the following \cref{thm:genpowit}. A proof for \cref{thm:genpowit} is given in \cref{sec:proof}. The proof is based on the approach used in \cite{halko2011structure} to prove \cref{thm:powit}.

\begin{theorem}[Average spectral error for half-power scheme]\label{thm:genpowit}
Let ${\bm A} \in {\mathbb{R}}^{n_r \times n_c}$ with nonnegative singular values $\lambda_1 \geq \lambda_2 \geq \cdots$, let $p \geq 2$ be the target rank and let $l \geq 2$ be an oversampling parameter, with $p + l \leq \min (n_r, n_c)$. Draw a Gaussian random matrix ${\bm \Omega}_r \in {\mathbb{R}}^{n_c \times (p+l)}$ and set ${\bm Y}_r = ({\bm A}^* {\bm A})^q {\bm \Omega}_r$ for an integer $q \geq 1$. Let ${\bm Q}_r \in {\mathbb{R}}^{n_c \times (p+l)}$ be an orthonormal matrix which forms a basis for the range of $\,{\bm Y}_r$. Then
\begin{displaymath}
    \mathbb{E} \left[ \norm{ {\bm A} - {\bm A} {\bm Q}_r {\bm Q}_r^* } \right]
    \leq
    \left[ 
    \left( 1 + \sqrt{\frac{p}{l-1}} \right) \lambda_{p+1}^{2q} +
    \frac{\mathrm{e} \sqrt{p+l}}{l} 
    \left( \sum_{j>p} \lambda_j^{4q} \right)^{1/2}
    \right]^{1/(2q)}
    \, .
\end{displaymath}
\end{theorem}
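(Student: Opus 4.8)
The plan is to follow the template that \cite{halko2011structure} uses for \cref{thm:powit}, but to replace the odd exponent $2q+1$ there (arising from the matrix $({\bm A}{\bm A}^*)^q {\bm A}$) with the even exponent $2q$ coming from $({\bm A}^*{\bm A})^q$; this is the sense in which \cref{thm:genpowit} is a ``half-power'' analogue. Throughout, write ${\bm \Pi} = {\bm I} - {\bm Q}_r {\bm Q}_r^*$ for the orthogonal projector onto the orthogonal complement of $\range({\bm Y}_r)$, so that the error to be bounded is $\norm{{\bm A} - {\bm A}{\bm Q}_r {\bm Q}_r^*} = \norm{{\bm A}{\bm \Pi}}$. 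Set ${\bm C} = {\bm A}^* {\bm A}$ and $\bar{\bm A} = {\bm C}^q = ({\bm A}^*{\bm A})^q$; both are Hermitian positive-semidefinite, with eigenvalues $\lambda_j^2$ and $\lambda_j^{2q}$ respectively, and ${\bm Q}_r$ is precisely the basis returned by the basic (two-view) randomized range finder applied to $\bar{\bm A}$, since ${\bm Y}_r = \bar{\bm A}{\bm \Omega}_r$.

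The first and key step is a deterministic ``half-power'' reduction that holds for every realization of ${\bm \Omega}_r$. Using ${\bm \Pi}^* = {\bm \Pi} = {\bm \Pi}^2$, I would write $\norm{{\bm A}{\bm \Pi}}^2 = \norm{{\bm \Pi}{\bm C}{\bm \Pi}}$ and then invoke the core estimate underlying the power scheme of \cite[Prop. 8.6]{halko2011structure}: for Hermitian positive-semidefinite ${\bm C}$, an orthogonal projector ${\bm \Pi}$, and any $s \geq 1$, $\norm{{\bm \Pi}{\bm C}{\bm \Pi}} \leq \norm{{\bm \Pi}{\bm C}^{s}{\bm \Pi}}^{1/s}$. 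This follows because $\norm{{\bm \Pi}{\bm C}{\bm \Pi}} = \max\{\langle {\bm u}, {\bm C}{\bm u}\rangle : \norm{{\bm u}} = 1,\ {\bm u} \in \range({\bm \Pi})\}$ and the scalar power-mean (Jensen) inequality gives $\langle {\bm u}, {\bm C}{\bm u}\rangle \leq \langle {\bm u}, {\bm C}^{s}{\bm u}\rangle^{1/s}$ for each such ${\bm u}$. Taking $s = 2q$ and using that $\bar{\bm A} = {\bm C}^q$ is Hermitian so that $\norm{{\bm \Pi}{\bm C}^{2q}{\bm \Pi}} = \norm{\bar{\bm A}{\bm \Pi}}^2$, this yields the pointwise bound $\norm{{\bm A}{\bm \Pi}} \leq \norm{\bar{\bm A}{\bm \Pi}}^{1/(2q)}$. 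Here $q \geq 1$ is exactly what guarantees $2q \geq 1$, so this inequality and the concavity used below are both legitimate.

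The remaining steps are routine. Since $t \mapsto t^{1/(2q)}$ is concave on $[0,\infty)$, Jensen's inequality gives $\mathbb{E}\,\norm{{\bm A}{\bm \Pi}} \leq \mathbb{E}\big[\norm{\bar{\bm A}{\bm \Pi}}^{1/(2q)}\big] \leq \big(\mathbb{E}\,\norm{\bar{\bm A}{\bm \Pi}}\big)^{1/(2q)}$. Because $\bar{\bm A}$ is Hermitian, $\norm{\bar{\bm A}{\bm \Pi}} = \norm{{\bm \Pi}\bar{\bm A}}$, which is exactly the quantity controlled by the average-case spectral bound for the basic randomized scheme (the $q=0$ case of \cref{thm:powit}, i.e. \cite[Thm. 10.6]{halko2011structure}) applied to $\bar{\bm A}$, whose singular values are $\lambda_j^{2q}$. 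Substituting $\lambda_{p+1}^{2q}$ and $\sum_{j>p}\lambda_j^{4q}$ for the leading and tail terms and then raising the result to the power $1/(2q)$ reproduces the stated bound. The main obstacle is the deterministic reduction of the first step: one must set up the projector identities and the positive-semidefinite power inequality with the correct (even) exponent, and verify that the co-range projector ${\bm Q}_r{\bm Q}_r^*$, rather than a column-space projector, is the right object. Once this is in place, the probabilistic content is inherited from \cite{halko2011structure} through a single application of Jensen's inequality.
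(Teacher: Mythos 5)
Your proposal is correct and follows essentially the same route as the paper: the paper isolates your deterministic reduction as a separate lemma (\cref{thm:halfpowaid}, proved via \cite[Prop.~8.6]{halko2011structure} exactly as you argue), then combines H\"older's inequality with \cite[Thm.~10.6]{halko2011structure} applied to $({\bm A}^*{\bm A})^q$, whose singular values are $\lambda_j^{2q}$. Your use of Jensen's inequality for the concave map $t \mapsto t^{1/(2q)}$ in place of the paper's H\"older step is an immaterial reordering of the same two inequalities.
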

The bounds in \cref{thm:genpowit} are what we could expect by extrapolating from \cref{thm:powit}. That is, the expected error of the randomized approximation decays exponentially with the number of matrix views. \cref{thm:powit} states that the expected approximation error for ${\bm A} \approx {\bm Q}_c {\bm Q}_c^* {\bm A}$ depends on $2q+1$ when using $2q+1$ views to generate an approximate basis ${\bm Q}_c$ for the range of ${\bm A}$. \cref{thm:genpowit} states similarly that the expected error for ${\bm A} \approx {\bm A} {\bm Q}_r {\bm Q}_r^*$ depends on $2q$ when using $2q$ views to form an approximate basis ${\bm Q}_r$ for the co-range of ${\bm A}$. \cref{thm:powit,thm:genpowit} show that it is important for the accuracy of \cref{alg:gensubit} that the spectrum of ${\bm A}$ decays rapidly.

\subsection{Apply \cref{alg:gensubit} to a matrix or its transpose?} \label{sec:apply2normalmat}

Given a budget of views $v$, an approximate TSVD of some matrix ${\bm J}$ can be found using \cref{alg:gensubit} with either ${\bm J}$ or ${\bm J}^*$ as input. However, it may be worth considering that these two approaches can differ in computational cost. Another thing to consider is that, depending on the application, the choice of method can have different impacts on the accuracy of downstream applications, which apply information from the low-rank approximation. The latter may seem surprising considering that the expected quality for an approximate TSVD of ${\bm J}$ is the same whether we apply \cref{alg:gensubit} to ${\bm J}$ or ${\bm J}^*$.

\subsubsection{Computational cost} \label{sec:gensubcost}

Considering computational cost, the choice between applying \cref{alg:gensubit} to ${\bm J}$ or ${\bm J}^*$ depends on a few factors. For an even number of views the two options only differ when it comes to generating the random Gaussian sampling matrix in Line \ref{line:subiterRandn} of \cref{alg:gensubit}. The cost of generating the sampling matrix is $(p + l) n_c T_\text{rand}$, where $T_\text{rand}$ is the cost of generating a Gaussian random number. In that case, to reduce computational cost ${\bm J}^*$ should be used as input when ${\bm J}$ has more columns than rows, otherwise use ${\bm J}$.

However, for an odd number of views the computational cost that should be considered when choosing between applying \cref{alg:gensubit} to ${\bm J}$ or ${\bm J}^*$ is $\mathcal{O}([p + l] n_c T_\text{rand} + (p + l) T_\text{mult} + [p + l]^2 n_r)$, where $T_\text{mult}$ indicates the cost of multiplying the input matrix by a vector and the last term comes from the QR-factorization on Line \ref{line:contentiousOddViews} in \cref{alg:gensubit}. For the type of problem motivating this study the dominant cost is from the matrix multiplication and in some cases there may be a noticeable difference between the cost of evaluating ${\bm J}$ or ${\bm J}^*$ times a matrix. For physics-based simulations ${\bm J}$ can represent a Jacobian matrix, which gives a local linear mapping between the model inputs (parameters) and outputs (e.g., observations or predictions). Then ${\bm J}$ times a matrix can be evaluated efficiently using a direct method (linearized forward simulations) and ${\bm J}^*$ times a matrix can be evaluated using an adjoint method (linearized backward simulations). The adjoint runs require more memory and can therefore be more costly for large simulation applications. In that case it might be best to apply \cref{alg:gensubit} to ${\bm J}$, when $v$ is odd, to lower the cost. This is likewise the case when using automatic differentiation \cite{bucker2006automatic,griewank2008autodiffbook} where the forward and backward modes correspond to the direct and adjoint method, respectively. The memory costs associated with tracking backwards (using a reverse mode or adjoint method) are commonly reduced somewhat by applying a method called checkpointing \cite{griewank1992}.

\subsubsection{Computational accuracy} \label{sec:JorJTaccuracy}

To evaluate accuracy we can look at the bounds in \cref{thm:powit,thm:genpowit}. For an even number of matrix views $v$ \cref{alg:gensubit} gives matrices ${\bm Q}_c$ and ${\bm Q}_r$ which form approximate bases for the range and co-range of the input matrix ${\bm A}$, such that
\begin{align}
    \mathbb{E} \left[ \norm{ {\bm A} - {\bm Q}_c {\bm Q}_c^* {\bm A} } \right] 
&    \leq
    \left[ 
    \left( 1 + \sqrt{\frac{p}{l-1}} \right) \lambda_{p+1}^{v-1} +
    \frac{\mathrm{e} \sqrt{p+l}}{l} 
    \left( \sum_{j>p} \lambda_j^{2(v-1)} \right)^{1/2}
    \right]^{1/(v-1)} ;
\\
    \mathbb{E} \left[ \norm{ {\bm A} - {\bm A} {\bm Q}_r {\bm Q}_r^* } \right]
&   \leq
    \left[ 
    \left( 1 + \sqrt{\frac{p}{l-1}} \right) \lambda_{p+1}^{v} +
    \frac{\mathrm{e} \sqrt{p+l}}{l} 
    \left( \sum_{j>p} \lambda_j^{2v} \right)^{1/2}
    \right]^{1/v} .  
\end{align}
However, for an odd number of matrix views $v$ the error bounds for the final matrices ${\bm Q}_c$ and ${\bm Q}_r$ in \cref{alg:gensubit} are
\begin{align}
    \mathbb{E} \left[ \norm{ {\bm A} - {\bm Q}_c {\bm Q}_c^* {\bm A} } \right] 
&    \leq
    \left[ 
    \left( 1 + \sqrt{\frac{p}{l-1}} \right) \lambda_{p+1}^{v} +
    \frac{\mathrm{e} \sqrt{p+l}}{l} 
    \left( \sum_{j>p} \lambda_j^{2v} \right)^{1/2}
    \right]^{1/v} ;
\\
    \mathbb{E} \left[ \norm{ {\bm A} - {\bm A} {\bm Q}_r {\bm Q}_r^* } \right]
&   \leq
    \left[ 
    \left( 1 + \sqrt{\frac{p}{l-1}} \right) \lambda_{p+1}^{v-1} +
    \frac{\mathrm{e} \sqrt{p+l}}{l} 
    \left( \sum_{j>p} \lambda_j^{2(v-1)} \right)^{1/2}
    \right]^{1/(v-1)}   .
\end{align}
Noting that the left and right singular vectors are estimated as a linear combination of the columns of ${\bm Q}_r$ and ${\bm Q}_c$ according to ${\bm U}_p = {\bm Q}_c \hat{\bm U}_p$ and ${\bm V}_p = {\bm Q}_r \hat{\bm V}_p$, the above error bounds suggest that the accuracy of the left-singular vectors may differ to that of the right-singular vectors. When $v$ is even ${\bm V}_p$ should be more accurate than or as accurate as ${\bm U}_p$, but when $v$ is odd ${\bm U}_p$ should be more accurate than or as accurate as ${\bm V}_p$.

This distinction may be important where the goal is not only the low-rank approximation of a matrix ${\bm J}$, but for instance if we are mainly interested in its right-singular vectors and do not care (or care less) about the left-singular vectors. Section \ref{sec:lstsqinversion} discusses an area of application where this can be of interest. One such application is estimating a truncated eigen-decomposition (TEVD) of the normal matrix ${\bm J}^* {\bm J}$. In that case we need the leading right-singular vectors and singular values of ${\bm J}$. With an even number of views $v$, then it can be more accurate to apply \cref{alg:gensubit} to ${\bm J}$. However, for an odd $v$ it is better to input ${\bm J}^*$. The following subsection shows that this strategy equates to applying a popular low-rank approximation method, meant for positive-semidefinite (psd) matrices, to the normal matrix ${\bm J}^* {\bm J}$. Recent studies by \cite{drineas2017,saibaba2018} consider the accuracy of approximate principal subspaces generated by the standard randomized subspace \cite{saibaba2018} and block Krylov \cite{drineas2017} methods.

\begin{algorithm}[b!]
\caption{Construct approximate range or co-range.} \label{alg:qrorqc}
\begin{flushleft}
\textbf{INPUT:} Matrix ${\bm A} \in {\mathbb{R}}^{n_r \times n_c}$, integers $p > 0$, $l \geq 0$ and $v \geq 1$.\\
\textbf{RETURNS:} Approximate basis for the column, ${\bm Q}_c$ ($v$ is odd), or row space, ${\bm Q}_r$ ($v$ is even), of ${\bm A}$. \\
\textbf{function} $\QrQc ( {\bm A},\, p,\, l,\, v )$
\end{flushleft}
\begin{algorithmic}[1]
%\Function{name}{params}
\STATE{${\bm Q}_r = \randn(n_c,\, p+l)$.}
\FOR{$j = 1$ \textbf{to} $v$}
	\IF{$j$ is odd}
    	\STATE{$[{\bm Q}_c,\, \sim] = \qr({\bm A} {\bm Q}_r)$.}
    \ELSE
        \STATE{$[{\bm Q}_r,\, \sim] = \qr({\bm A}^* {\bm Q}_c)$.}
     \ENDIF
\ENDFOR
\IF{$v$ is even}
	\STATE{{\bf return} ${\bm Q}_r$}
\ELSE
	\STATE{{\bf return} ${\bm Q}_c$}
\ENDIF
%\EndFunction
\end{algorithmic}
\end{algorithm}

\begin{algorithm}[b!]
\caption{Nystr\"{o}m approach for a truncated eigen-decomposition of ${\bm J}^* {\bm J}$.} \label{alg:nystrom}
\begin{flushleft}
\textbf{INPUT:} Matrix ${\bm J} \in {\mathbb{R}}^{n_r \times n_c}$, integers $p > 0$, $l \geq 0$ and $v \geq 2$.\\
\textbf{RETURNS:} Approximate rank-$p$ EVD, ${\bm V}_p {\bm \Lambda}^2_p {\bm V}_p^*$, of ${\bm J}^* {\bm J}$.
\end{flushleft}
\begin{algorithmic}[1]
\IF{$v==2$}
	\STATE{${\bm Q}_r = \orth( \randn(n_c,\, p+l) )$.}
\ELSIF{$v$ is even}
	\STATE{${\bm Q}_r = \QrQc ({\bm J},\, p,\, l,\, v-2)$.}
\ELSE
	\STATE{${\bm Q}_r = \QrQc ({\bm J}^*,\, p,\, l,\, v-2)$.}
\ENDIF
\STATE{${\bm Y}_r = {\bm J}^* [{\bm J} {\bm Q}_r]$.}
\STATE{$\nu = 2.2 \cdot 10^{-16} \norm{{\bm Y}_r}$.}
\STATE{${\bm Y}_r \leftarrow {\bm Y}_r + \nu {\bm Q}_r$.}
\STATE{${\bm B} = {\bm Q}_r^* {\bm Y}_r$.}
\STATE{${\bm C}_L = \chol( [{\bm B} + {\bm B}^*]/2,\, \mathit{LOWER})$.}
\STATE{${\bm F} = {\bm C}_L^{-1} {\bm Y}_r^*$.}
\STATE{$[\sim ,\, \hat{\bm \Lambda}_p,\, {\bm V}_p,] = \tsvd({\bm F},\, p)$.}
\STATE{${\bm \Lambda}_p^2 = \hat{\bm \Lambda}_p^2 - \nu {\bm I}$ and set all negative elements of ${\bm \Lambda}_p^2$ to $0$.}
\end{algorithmic}
\end{algorithm}

\subsubsection{Link to standard methods for approximating a normal matrix} \label{sec:link2nystrompinched}

The so-called prolonged or Nystr\"{o}m type, and pinched sketching methods have been discussed extensively in the literature for approximating a psd matrix ${\bm A}_\text{psd}$, see, e.g., \cite{gittens2016revisiting,halko2011structure}. Given an approximate range ${\bm Q}_r$ for ${\bm A}_\text{psd}$, the pinched method gives the following approximation
\begin{align} \label{eq:basicPinched}
{\bm A}_\text{psd} \approx {\bm Q}_r ( {\bm Q}_r^* {\bm A}_\text{psd} {\bm Q}_r ) {\bm Q}_r^*
.
\end{align}
The prolonged method, on the other hand, uses
\begin{equation} \label{eq:basicProlonged}
{\bm A}_\text{psd}
\approx 
( {\bm A}_\text{psd}  {\bm Q}_r )
[ {\bm Q}_r^* {\bm A}_\text{psd}  {\bm Q}_r ]^{-1}
( {\bm A}_\text{psd}  {\bm Q}_r )^*
.
\end{equation}
Assuming ${\bm Q}_r$ and a straightforward implementation then the pinched and prolonged approaches have the same cost when it comes to multiplications with ${\bm A}_\text{psd}$, i.e. both use ${\bm A}_\text{psd}  {\bm Q}_r$. Though, in this case they have about the same computational cost, the prolonged method is more accurate than the pinched one \cite{gittens2016revisiting,halko2011structure}.

A psd matrix can generally be written as a normal matrix ${\bm A}_\text{psd} = {\bm J}^* {\bm J}$. Then the pinched sketch can be written as
\begin{align} \label{eq:genpinched}
{\bm J}^* {\bm J} \approx {\bm Q}_r ( {\bm Q}_r^* {\bm J}^* {\bm J} {\bm Q}_r ) {\bm Q}_r^* 
= ({\bm Q}_r  {\bm Q}_r^* {\bm J}^*) ({\bm J} {\bm Q}_r  {\bm Q}_r^*)
.
\end{align}
Let us, assume for now that ${\bm Q}_r$ is formed in a standard fashion by applying $q$ power iterations to ${\bm A}_\text{psd}$, with ${\bm Q}_r = \orth ({\bm A}_\text{psd}^q {\bm \Omega}_r)$. Then \eqref{eq:genpinched} is algebraically (but not numerically) equivalent to applying \cref{alg:gensubit} to ${\bm J}$ using $2q + 1$ views to find ${\bm J} \approx {\bm J} {\bm Q}_r  {\bm Q}_r^*$ and then forming the low-rank TEVD of ${\bm J}^* {\bm J}$.

For the prolonged sketch we can write
\begin{align} \label{eq:gennystrom}
{\bm J}^* {\bm J}
&\approx 
( {\bm J}^* {\bm J}  {\bm Q}_r )
[ {\bm Q}_r^* {\bm J}^* {\bm J} {\bm Q}_r ]^{-1}
( {\bm J}^* {\bm J} {\bm Q}_r )^*
\\
&=
{\bm J}^* ( {\bm J}  {\bm Q}_r )
[ ( {\bm J} {\bm Q}_r )^* ( {\bm J}  {\bm Q}_r )]^{-1}
( {\bm J} {\bm Q}_r )^* {\bm J}
\\
&=
{\bm J}^* {\bm Q}_c {\bm Q}_c^*  {\bm J}
=
({\bm J}^* {\bm Q}_c {\bm Q}_c^*) ({\bm Q}_c   {\bm Q}_c^*  {\bm J})  ,  \label{eq:gennystrom2}
\end{align}
where ${\bm Q}_c$ is an orthonormal basis for ${\bm J} {\bm Q}_r$. Therefore, the standard prolonged method is like using the generalized subspace iteration method for the approximation ${\bm J}  \approx {\bm Q}_c {\bm Q}_c^*  {\bm J}$. Assuming again that ${\bm Q}_r$ is generated by applying $q$ power iterations to ${\bm A}_\text{psd}$, then the standard prolonged scheme is algebraically equivalent to applying \cref{alg:gensubit} to ${\bm J}$ using $2q + 2$ views. This is a generalization of the observation made by \cite{gittens2016revisiting}, that for ${\bm Q}_r = \orth ({\bm A}_\text{psd} {\bm \Omega}_r)$ and $q=1$ the prolonged method is like applying a four view method to ${\bm A}_\text{psd}^{1/2}$ and the pinched method is like viewing ${\bm A}_\text{psd}^{1/2}$ three times. Therefore, it is clear why the standard prolonged (or Nystr\"{o}m) type method is considered more accurate than the pinched method as it is like applying one extra view or half iteration with the generalized subspace iteration method. Furthermore, the standard prolonged method corresponds to using \cref{alg:gensubit} to estimate ${\bm J}^* {\bm J}$ in the more accurate way using $2q+2$ views (see section \ref{sec:JorJTaccuracy}). The pinched method, however, corresponds to using \cref{alg:gensubit} to estimate ${\bm J}^* {\bm J}$ in the less accurate way with $2q+1$ views.

For the above discussion on the standard pinched and prolonged methods the factorization ${\bm A}_\text{psd} = {\bm J}^* {\bm J}$ was mainly used to demonstrate differences between \eqref{eq:basicPinched} and \eqref{eq:basicProlonged}, and it was not assumed that ${\bm J}$ could be applied separately. However, if we can multiply ${\bm J}^*$ and ${\bm J}$ separately with matrices, then the pinched and prolonged methods can be implemented more generally given a budget of views $v$ for ${\bm J}$. 

\begin{algorithm}[b!]
\caption{Using pinched sketch for a truncated eigen-decomposition of ${\bm J}^* {\bm J}$.} \label{alg:pinched}
\begin{flushleft}
\textbf{INPUT:} Matrix ${\bm J} \in {\mathbb{R}}^{n_r \times n_c}$, integers $p > 0$, $l \geq 0$ and $v \geq 2$.\\
\textbf{RETURNS:} Approximate rank-$p$ EVD, ${\bm V}_p {\bm \Lambda}^2_p {\bm V}_p^*$, of ${\bm J}^* {\bm J}$.
\end{flushleft}
\begin{algorithmic}[1]
\IF{$v$ is even}
	\STATE{${\bm Q}_r = \QrQc ({\bm J}^*,\, p,\, l,\, v-1)$.}
\ELSE
	\STATE{${\bm Q}_r = \QrQc ({\bm J},\, p,\, l,\, v-1)$.}
\ENDIF
\STATE{${\bm B}_c = {\bm J} {\bm Q}_r$.}
\STATE{$[{\bm \Lambda}_p^2,\, \hat{\bm V}_p] = \tevd({\bm B}_c^* {\bm B}_c,\, p)$.}
\STATE{${\bm V}_p = {\bm Q}_r \hat{\bm V}_p$.}
\end{algorithmic}
\end{algorithm}

The modified prolonged or Nystr\"{o}m approach is given in \cref{alg:nystrom}, which is based on a single-pass Nystr\"{o}m algorithm proposed by \cite{tropp2017psd} for approximating psd matrices. \cref{alg:nystrom} broadens their algorithm to work for our purposes. \cref{alg:nystrom} uses $v-2$ views of ${\bm J}$ to generate an approximate basis ${\bm Q}_r$ for the co-range of ${\bm J}$, by calling \cref{alg:qrorqc}. Then, a TEVD is formed based on \eqref{eq:gennystrom}, which costs an additional 2 views. Considering these steps for the modified Nystr\"{o}m approach and \eqref{eq:gennystrom2}, we see that this is like using the recommended accurate way of using \cref{alg:gensubit} for approximating a normal matrix, see section \ref{sec:JorJTaccuracy}. 

Similarly, a modified pinched algorithm which uses $v$ views for approximating a normal matrix is given by \cref{alg:pinched}. \cref{alg:pinched} uses $v-1$ views to form ${\bm Q}_r$ instead of $v-2$ views. Using the final view an approximation is formed based on \eqref{eq:genpinched}. Notice that this approach equates to not following the recommendations given in section \ref{sec:JorJTaccuracy} for using \cref{alg:gensubit} to approximate ${\bm J}^* {\bm J}$. Section \ref{sec:resultsnormalmatrix} demonstrates that the modified prolonged method is more accurate than the pinched approach. As mentioned, we can use these methods interchangeably with the subspace iteration \cref{alg:gensubit} to estimate ${\bm J}^* {\bm J}$, when ${\bm J}$ and ${\bm J}^*$ can be multiplied separately with matrices. We have preferred using \cref{alg:gensubit} since it allows the flexibility of generating an approximate TSVD of ${\bm J}$ or ${\bm J}^* {\bm J}$ as needed.

\subsection{Modifications of the generalized subspace iteration algorithm}

\subsubsection{Normalizing the half iterations}

The generalized subspace iteration method (\cref{alg:gensubit}) can be varied in many of the same ways as the standard subspace iteration method (\cref{alg:subit}). Like \cref{alg:subit}, \cref{alg:gensubit} can be modified to use LU-factorizations instead of QR-factorizations to normalize the algorithm after each of the first $v-2$ views. Another option may be to skip some of the renormalization steps. These types of alternative renormalization schemes are presented for the standard subspace or power iteration approaches in \cite{erichson2017,szlam2014,voronin2016}.

\subsubsection{Different post-processing}

\cref{alg:subit} was based on an algorithm proposed by \cite{voronin2016} and the same algorithm inspired in part \cref{alg:gensubit}. Voronin and Martinsson \cite{voronin2016} proposed another subspace iteration algorithm that avoids finding the TSVD of the long matrix ${\bm B}$ from the QB-factorization approximating ${\bm A}$. That approach involves finding a TEVD of the smaller normal matrix ${\bm B} {\bm B}^*$ and a few more steps give the desired approximate TSVD. The generalized subspace iteration method can be modified to use this type of post-processing, see \cref{alg:gensubitV2}. \cref{alg:gensubitV2} can be compared with \cref{alg:pinched} to better understand the parallels between the pinched sketching scheme and the generalized subspace iteration approach. Note that \cref{alg:gensubitV2} requires extra care when zero valued singular values appear.

\begin{algorithm}[b!]
\caption{Randomized SVD using generalized subspace iteration V2} \label{alg:gensubitV2}
\begin{flushleft}
\textbf{INPUT:} Matrix ${\bm A} \in {\mathbb{R}}^{n_r \times n_c}$, integers $p > 0$, $l \geq 0$ and $v \geq 2$.\\
\textbf{RETURNS:} Approximate rank-$p$ SVD, ${\bm U}_p {\bm \Lambda}_p {\bm V}_p^*$, of ${\bm A}$.
\end{flushleft}
\begin{algorithmic}[1]
\IF{$v$ is even}
    \STATE{${\bm Q}_c = \QrQc ({\bm A},\, p,\, l,\, v-1)$.}
    \STATE{${\bm B}_r = {\bm A}^* {\bm Q}_c$.}
	\STATE{$[{\bm \Lambda}_p^2 ,\, \hat{{\bm U}}_p] = \tevd({\bm B}_r^* {\bm B}_r,\, p)$.}
    \STATE{${\bm U}_p = {\bm Q}_c \hat{\bm U}_p$ and ${\bm V}_p = {\bm B}_r \hat{\bm U}_p {\bm \Lambda}_p^{-1}$.}
\ELSE
	\STATE{${\bm Q}_r = \QrQc ({\bm A},\, p,\, l,\, v-1)$.}
    \STATE{${\bm B}_c = {\bm A} {\bm Q}_r$.}
	\STATE{$[{\bm \Lambda}_p^2 ,\, \hat{{\bm V}}_p] = \tevd({\bm B}_c^* {\bm B}_c,\, p)$.}
    \STATE{${\bm V}_p = {\bm Q}_r \hat{\bm V}_p$ and ${\bm U}_p = {\bm B}_c \hat{\bm V}_p {\bm \Lambda}_p^{-1}$.}
\ENDIF
\end{algorithmic}
\end{algorithm}

\subsubsection{Check for convergence}

\cref{alg:gensubit} was also inspired by a subspace iteration method proposed in the 1990's by \cite{vogel1994}. For deciding when to halt their subspace iteration method, \cite{vogel1994} considered the change in estimated singular values between iterations. Their algorithm is halted when the singular values have not perceivably changed (given some tolerance) relative to the ones estimated at the previous iteration. A similar stopping condition could be considered for \cref{alg:gensubit}. All it would need is to evaluate the singular values of ${\bm R}_r$ or ${\bm R}_c$ at every (half) iteration (not costly since they are small matrices).

\subsubsection{Other random sampling matrices}

Other randomized sampling matrices can be considered instead of the Gaussians used here for initializing the generalized subspace iterations. Alternative types of random matrices have been discussed in detail in the literature \cite{erichson2017,gittens2016revisiting,halko2011structure,szlam2014,tropp2017}. The type of sampling matrix impacts the computational cost. Then a cost examination like the one in section \ref{sec:gensubcost} can be considered, but with modifications. For example, for a dense matrix ${\bm A}$ a subsampled randomized Fourier transform (SRFT) matrix can be used instead of a Gaussian to reduce the cost of evaluating ${\bm A} {\bm \Omega}_r$ from $\mathcal{O}(n_r n_c [p+l])$ to $\mathcal{O}(n_r n_c \log{[p+l]})$ \cite{halko2011structure, liberty2007, woolfe2008}. However, choosing an SRFT typically requires that ${\bm \Omega}_r$ have more columns than would be needed when using a Gaussian sampling matrix. Therefore, we have not considered using SRFTs since the cost of applying a Jacobian to a sampling matrix using adjoint and direct methods depends on the number of columns the sampling matrix has. We have not considered using sparse random sampling matrices for the same reason, though they cost less to store and generate.

\subsection{Other matrix factorization methods}

The present study focuses on methods for generating approximate TSVD factorizations. Nevertheless, the general ideas presented here can also be applied to other low-rank methods that use an approximate range or co-range to form a factorization. A partial pivoted QR-factorization can, for instance, be formed using a QB-factorization \cite{martinsson2016qrblocked}. Instead of using $q$ standard power or subspace iterations to form the approximate basis ${\bm Q}_c$ using $2q + 1$ views, ${\bm Q}_c$ can be formed using $v-1$ views. Then ${\bm B} = {\bm Q}_c^* {\bm A}$ is formed with the last view. Erichson et al. \cite{erichson2018nonnegative} used such a matrix ${\bm B}$, generated by standard subspace iteration, to initialize their proposed method for generating a nonnegative matrix factorization. For more flexibility, the half subspace iteration approach could also be used within their scheme. Martinsson \cite{martinsson2016rsvd} presented randomized CUR and interpolative decomposition (ID) algorithms that use power iterations. Those power iterations could also be replaced more generally by a half subspace iteration process. 

In \cite{martinsson2017utv}, a randomized algorithm is described for generating a UTV-factorization ${\bm A} = {\bm U} {\bm T} {\bm V}^*$ (where ${\bm T}$ is a triangular matrix, and ${\bm U}$ and ${\bm V}$ are unitary matrices). An inner loop of their algorithm, contains a power-iteration procedure to generate a co-range sketch ${\bm Y}_r$ of ${\bm A}$. A more general half power or subspace iteration approach could also be used in this case for generating ${\bm Y}_r$. Martinsson et al. \cite{martinsson2017utv} noted the parallels between their UTV procedure and a randomized power iteration method. In fact, the inner loop of their UTV algorithm involves $2q + 3$ views and the accuracy of their procedure can be considered in terms of half power iterations \cite[Thm. 1]{martinsson2017utv}. By applying a half power/subspace iteration method, their procedure could be generalized to use $v \geq 3$ views, though \cite{martinsson2017utv} did not propose this option.

\section{Application: nonlinear least-squares inversion} \label{sec:lstsqinversion}

Solving linear matrix equations is a routine task in scientific computing which becomes computationally more demanding as the matrices involved increase in dimension. Various randomized methods have been presented for solving linear matrix equations or linear regression \cite{avron2010blendenpik,clarkson2013,drineas2010lstsq,gower2015randomized,meng2014,rokhlin2008regression,sarlos2006,xiang2013}. The algorithms and methods presented in this study were motivated by a study \cite{bjarkason2017randomized} which considered randomized methods for updating model parameters of a nonlinear reservoir model using a modified Levenberg-Marquardt (LM) approach. Another topic of interest is using randomized methods to estimate the posterior probability distribution for the parameters of a nonlinear model using a quadratic or Laplace approximation \cite{buithanh2012,cui2014,cui2016}. Assuming Gaussian statistics for the observation noise and prior, this involves approximating a normal matrix ${\bm J}^* {\bm J}$.

\subsection{Levenberg-Marquardt update}

An assumption commonly used for inversion of reservoir models is that the distributions for the observations and the prior of the model parameters can be described by Gaussian statistics. That is, the statistics for the observations and prior can be described by expected mean values and a covariance matrix. Without loss of generality, we apply a linear whitening transformation such that both the model parameters ${\bm x} \in \mathbb{R}^{N_m}$ and observation error ${\bm d} ({\bm x}) \in \mathbb{R}^{N_d}$, are associated with a zero mean and a covariance matrix that is an appropriately sized identity matrix. ${\bm d} ({\bm x})$ quantifies how much the simulated outputs, given ${\bm x}$, depart from the observations. Then the inversion task is to solve
\begin{equation} \label{eq:lstsqproblem}
\operatorname*{argmin}_{\bm x} 
\, 
\norm{ {\bm d} ({\bm x}) }^2
+
\mu \norm{ {\bm x} }^2
,
\end{equation}
where $\mu$ is a regularization weight.

Using the LM approach, $\eqref{eq:lstsqproblem}$ is tackled by an iterative procedure where model updates are found by
\begin{equation} \label{eq:LMupdate}
	\left[ {\bm J}^* {\bm J}  + (\mu + \gamma ) {\bm I}\right]  \delta {\bm x}  =  - {\bm J}^* {\bm d}  - \mu {\bm x}   
    ,
\end{equation}
and new models are generated by ${\bm x} \leftarrow {\bm x} + \delta {\bm x}$. Here $\gamma > 0$ is the adjustable LM damping factor and ${\bm J} \in \mathbb{R}^{N_d \times N_m}$ is the Jacobian matrix which defines an approximate local linear mapping from ${\bm x}$ to ${\bm d} ({\bm x})$. In \cite{bjarkason2017randomized,shirangi2014, shirangi2016,tavakoli2010} ${\bm J}$ is referred to as the dimensionless sensitivity matrix. For a large number of observations $N_d$ and $N_m$, generating the often dense Jacobian and solving \eqref{eq:LMupdate} becomes costly. However, for a nonlinear problem an exact solution to \eqref{eq:LMupdate} is not necessarily needed or desired, and approximate solution procedures are typically used to save time. After applying a randomized TSVD method to find ${\bm J} \approx {\bm U}_p {\bm \Lambda}_p {\bm V}_p^*$, then that low-rank factorization can be used to approximately solve \eqref{eq:LMupdate} \cite{bjarkason2017randomized}. It makes sense to apply randomized low-rank approximation methods to ${\bm J}$ since for many problems it has a rapidly decaying spectrum.

\subsection{Approximate truncated updates}

Here we discuss three approximate ways of solving the LM update based on a TSVD of the Jacobian matrix. Some of the analysis is based on the work of \cite{voronin2015lstsq} who looked at using randomized low-rank approximation methods for solving a seismic tomography problem, which is a linear inverse problem. Some of the theoretical analysis in \cite{voronin2015lstsq} assumes that the TSVD is exact. Here we try to provide analysis that is also applicable to an approximate TSVD, but indicate when the conclusions use the exact TSVD.

The work of \cite{bjarkason2017randomized} discussed advantages of using randomized 2-view or 1-view methods for speeding up the LM approach for inverting geothermal reservoir models. The approaches presented in \cite{bjarkason2017randomized} are based on the work of \cite{shirangi2014,shirangi2016,tavakoli2010}, who used an iterative Lanczos approach to generate a TSVD of ${\bm J}$. After forming an approximate TSVD of ${\bm J}$, an approximate LM update can be found according to \cite{bjarkason2017randomized,shirangi2014,shirangi2016,tavakoli2010}
\begin{equation} \label{eq:LMupdatedx1}
\delta {\bm x}_1 = - {\bm V}_p [{\bm \Lambda}_p^2 + (\mu + \gamma) {\bm I}_p]^{-1}  ({\bm \Lambda}_p {\bm U}_p^* {\bm d} + \mu {\bm V}_p^* {\bm x})
= \sum_{i=1}^p \alpha_i {\bm v}_i
,
\end{equation}
where 
\begin{equation}
\alpha_i = - \frac{1}{\mu + \gamma + \lambda_i^2}  \left[  {\lambda}_i {\bm u}_i^*   {\bm d} +  \mu {\bm v}_i^* {\bm x} \right]
.
\end{equation}
Following \cite{shirangi2014,shirangi2016,tavakoli2010}, \cite{bjarkason2017randomized} chose to gradually increase $p$ between LM iterations, since using a large $p$ at early iterations can be computationally wasteful and can result in bad convergence characteristics. The study by \cite{bjarkason2017randomized} demonstrated that using randomized low-rank approximations can work well and can be considerably faster than using a standard iterative Lanczos method. 

The LM method is better suited to the randomized paradigm than using the corresponding Gauss-Newton method ($\gamma = 0$), since the LM damping factor acts as a fail-safe mechanism. That is, when a LM update fails because of an inaccurate low-rank approximation, then a new update is attempted using a larger $\gamma$. Increasing $\gamma$ reduces contributions from the least accurately estimated 
singular vectors, which are associate with small singular values. A benefit of using \eqref{eq:LMupdatedx1}
is that no extra views of ${\bm J}$ are needed for additional damping factors $\gamma$ within a LM iteration, since the TSVD of ${\bm J}$ is independent of $\gamma$.

A second option similar to \eqref{eq:LMupdatedx1} is to use \cite{tavakoli2011}
\begin{equation} \label{eq:LMupdatedx2}
\delta {\bm x}_2 = - {\bm V}_p [{\bm \Lambda}_p^2 + (\mu + \gamma) {\bm I}_p]^{-1} {\bm V}_p^* ({\bm g}_\text{obs} + \mu {\bm x}) 
= \sum_{i=1}^p \beta_i {\bm v}_i
,
\end{equation}
where
\begin{equation}
\beta_i = 
- \frac{1}{\mu + \gamma + \lambda_i^2}  \left[  {\bm v}_i^*   {\bm g}_\text{obs} +  \mu {\bm v}_i^* {\bm x} \right]
\end{equation}
and ${\bm g}_\text{obs} = {\bm J}^* {\bm d}$ is the observation gradient. Generating the additional observation gradient does not require much additional computational cost since, with minor modifications, it can be evaluated along with the first pass for ${\bm J}^*$ within the randomized scheme. Equation \eqref{eq:LMupdatedx2} needs estimates for the principal right-singular vectors and singular values of ${\bm J}$. These can be evaluated by approximating a TSVD of ${\bm J}$ or a TEVD of ${\bm J}^* {\bm J}$, see discussion on this in section \ref{sec:apply2normalmat}.

When using the exact TSVD then $\delta {\bm x}_1 = \delta {\bm x}_2$, since
\begin{align}
\delta {\bm x}_2 &= 
- {\bm V}_p [{\bm \Lambda}_p^2 + (\mu + \gamma) {\bm I}_p]^{-1} {\bm V}_p^* ({\bm V} {\bm \Lambda} {\bm U}^* {\bm d} + \mu {\bm x}) 
\\
&= 
- {\bm V}_p [{\bm \Lambda}_p^2 + (\mu + \gamma) {\bm I}_p]^{-1}  ({\bm \Lambda}_p {\bm U}_p^* {\bm d} + \mu {\bm V}_p^* {\bm x}) 
= \delta {\bm x}_1
.
\end{align}
Likewise, when using a randomized approximation according to ${\bm J} \approx \hat{\bm J} = \llbracket {\bm J} {\bm Q}_r \rrbracket_p {\bm Q}_r^* = {\bm U}_p {\bm \Lambda}_p \hat{\bm V}_p^* {\bm Q}_r^* = {\bm U}_p {\bm \Lambda}_p {\bm V}_p^*$, with ${\bm V}_p = {\bm Q}_r \hat{\bm V}_p$, then
\begin{align}
\delta {\bm x}_2 &= 
- {\bm V}_p [{\bm \Lambda}_p^2 + (\mu + \gamma) {\bm I}_p]^{-1} {\bm V}_p^* ( {\bm J}^* {\bm d} + \mu {\bm x}) 
\label{eq:dx1dx2same}
\\
&= 
- {\bm V}_p [{\bm \Lambda}_p^2 + (\mu + \gamma) {\bm I}_p]^{-1} (\hat{\bm V}_p^* {\bm Q}_r^* {\bm J}^* {\bm d} + \mu {\bm V}_p^* {\bm x}) 
\\
&= 
- {\bm V}_p [{\bm \Lambda}_p^2 + (\mu + \gamma) {\bm I}_p]^{-1} (\hat{\bm V}_p^* [{\bm J} {\bm Q}_r]^* {\bm d} + \mu {\bm V}_p^* {\bm x}) 
\\
&= 
- {\bm V}_p [{\bm \Lambda}_p^2 + (\mu + \gamma) {\bm I}_p]^{-1} (\hat{\bm V}_p^* \hat{\bm V}_{p} {\bm \Lambda}_{p} {\bm U}_{p}^* {\bm d} + \mu {\bm V}_p^* {\bm x}) 
\\
&= 
- {\bm V}_p [{\bm \Lambda}_p^2 + (\mu + \gamma) {\bm I}_p]^{-1} ({\bm \Lambda}_p {\bm U}_p^* {\bm d} + \mu {\bm V}_p^* {\bm x}) 
= \delta {\bm x}_1
.
\end{align}
In this case it is not worth generating the observation gradient explicitly and using $\delta {\bm x}_1$ should be preferred. However, when using ${\bm J} \approx \hat{\bm J}  = {\bm Q}_c \llbracket {\bm Q}_c^* {\bm J} \rrbracket_p = {\bm Q}_c \hat{\bm U}_p {\bm \Lambda}_p {\bm V}_p^*$ then $\delta {\bm x}_1$ uses ${\bm g}_\text{obs} \approx \hat{\bm J}^* {\bm d} = \llbracket {\bm J}^* {\bm Q}_c \rrbracket_p  {\bm Q}_c^* {\bm d}$ and $\delta {\bm x}_1$ can differ from $\delta {\bm x}_2$. Finding $\delta {\bm x}_2$ with ${\bm J} \approx {\bm Q}_c \llbracket {\bm Q}_c^* {\bm J} \rrbracket_p$ uses the recommended way (when considering accuracy) of applying \cref{alg:gensubit} or \cref{alg:nystrom} when approximating ${\bm J}^* {\bm J}$, see section \ref{sec:apply2normalmat}. Using $\delta {\bm x}_2$ with \cref{alg:gensubit} to generate $\bm J \approx \llbracket {\bm J} {\bm Q}_r \rrbracket_p {\bm Q}_r^*$ is like using $\delta {\bm x}_2$ with \cref{alg:pinched} to approximate ${\bm J}^* {\bm J}$.

A third approximate solution to \eqref{eq:LMupdate} can be found by using ${\bm J} \approx \hat{\bm J} = {\bm U}_p {\bm \Lambda}_p {\bm V}_p^*$ and
\begin{equation} \label{eq:approxPosteriorLaplace}
\left[ {\bm J}^* {\bm J}  + (\mu + \gamma ) {\bm I}\right]^{-1}
\approx 
\left[ \hat{\bm J}^* \hat{\bm J} + (\mu + \gamma ) {\bm I}\right]^{-1}
=
\frac{1}{\mu + \gamma} \left[ {\bm I} - {\bm V}_p {\bm D}_p {\bm V}_p^* \right]
,
\end{equation}
where ${\bm D}_p \in \mathbb{R}^{p \times p}$ is a diagonal matrix with $[{\bm D}_p]_{i,i} = \lambda_i^2 /(\mu + \gamma + \lambda_i^2)$. 
Taking $\gamma = 0$, then \eqref{eq:approxPosteriorLaplace} can be used to estimate the posterior parameter covariance matrix and can therefore be used to estimate parameter uncertainty \cite{buithanh2012,petra2014}. Using \eqref{eq:approxPosteriorLaplace} for the LM update gives
\begin{align} \label{eq:LMupdatedx3}
\delta {\bm x}_3 = - \frac{1}{\mu + \gamma} \left[ {\bm I} - {\bm V}_p {\bm D}_p {\bm V}_p^* \right] ({\bm g}_\text{obs} + \mu {\bm x})  
= - \frac{1}{\mu + \gamma} \left[ ({\bm g}_\text{obs} + \mu {\bm x}) + \sum_{i=1}^p \lambda_i^2 \beta_i {\bm v}_i \right]
.
\end{align}
\Cref{eq:LMupdatedx3} was proposed by \cite{vogel1993} with ${\bm J}^* {\bm J}$ approximated using a a subspace iteration procedure applied to ${\bm J}^* {\bm J}$. Similarly, \cite{wang2016spsd} proposed using this type of approximation and randomized methods to solve linear equations arising in Gaussian process regression and classification in the machine learning context.

The approximation error of \eqref{eq:approxPosteriorLaplace} is 
$
\mathcal{O} \left( \sum_{p+1}^{N_m} \frac{\lambda_i^2}{\mu + \gamma + \lambda_i^2}  \right)
$
\cite{buithanh2012,petra2014}. Therefore, a good approximation can be maintained when only truncating away singular values that are negligible compared to $\sqrt{\mu + \gamma}$. This matches our experience with using \eqref{eq:LMupdatedx3}, where updates typically fail unless a LM damping factor is used such that $\mu + \gamma$ is about or greater than an order of magnitude above that of $\lambda_p$.

\cref{sec:appendixTSVDLMupdates} compares the properties of the three approximate LM solution methods presented here, based on an analysis given by \cite{voronin2015lstsq}. A comparison is given for the length of the model updates and some insight is given for the expected approximation error. The end of \cref{sec:appendixTSVDLMupdates} also gives a correction for a proposition given by \cite[Prop. 5.6]{voronin2015lstsq}. As shown in \cref{sec:appendixTSVDLMupdates}, for a given low-rank approximation the lengths of $\delta {\bm x}_1$ and $\delta {\bm x}_2$ increase monotonically with the truncation $p$. Therefore, when using $\delta {\bm x}_1$ or $\delta {\bm x}_2$, the truncation point $p$ can be used to regularize each model update. However, the third scheme $\delta {\bm x}_3$ does not have this property since lowering $p$ is expected to result in a longer model update $\delta {\bm x}_3$. Experimenting with synthetic inverse problems we have found that this is the case in practice. Because of the regularizing properties of $\delta {\bm x}_1$ and $\delta {\bm x}_2$ we have found that those methods work well at early iterations. However, $\delta {\bm x}_3$ can work better at later iterations since the model update $\delta {\bm x}_3$ is in the full parameter space, unlike $\delta {\bm x}_1$ and $\delta {\bm x}_2$ which only base the model update on the leading $p$ right-singular vectors. We have found that choosing $\delta {\bm x}_3$ at late inversion iterations can help to reduce both the overall objective function \eqref{eq:lstsqproblem} and the regularization term. We have attributed this to the fact that $\delta {\bm x}_3$ works in the full space, which can help smooth out deviations from the prior model that may build up during the nonlinear inversion, as well as improving matches to observations.

\section{Single-pass methods for rectangular matrices} \label{sec:1view}

The work of 
\cite{bjarkason2017randomized} also considered using a 1-view method to generate a TSVD of a dimensionless sensitivity matrix and then solving the LM update equations approximately using \eqref{eq:LMupdatedx1}. This approach speeds up each LM iteration, since all the adjoint and direct equations can in this case be solved in parallel. 1-view methods are broadly of interest for problems where a standard 2-view approach is considered too costly, or where the information in the data matrix is only accessible a single time and a 1-view method is the only option. The following subsections consider advancing state-of-the-art 1-view methods.

\subsection{Streaming models}
In recent years, low-rank approximation methods have been developed that only need to access the matrix of interest once \cite{halko2011structure,tropp2017,woolfe2008,boutsidis2016,upadhyay2016,cohen2015,clarkson2009,woodruff2014}. These single-pass or 1-view methods assemble range and co-range sketches with one pass of the data matrix ${\bm A}$ and then use the information contained in the sketches to generate a low-rank approximation. This section discusses 1-view methods for rectangular matrices. Note that when the matrix of interest is Hermitian or positive-semidefinite, then 1-view algorithms that are optimized for such matrices \cite{halko2011structure,tropp2017,tropp2017psd} should be considered. 

1-view methods are of interest in a streaming setting where the data matrix ${\bm A}$ is never stored fully in random-access memory but is presented as a finite stream of linear updates \cite{tropp2017}: 
\begin{equation} \label{eq:streammat}
{\bm A} = \sum_{i=1}^N {\bm H}_i
.
\end{equation}
Noting that we discard each ${\bm H}_i$ after its use; as each innovation matrix ${\bm H}_i$ is briefly made available, we can gradually sample from the column and row space of ${\bm A}$ by
\begin{equation} \label{eq:streammatmult}
{\bm A} {\bm \Omega}_r = \sum_{i=1}^N {\bm H}_i {\bm \Omega}_r  
\,\,\,
\text{and}
\,\,\,
{\bm A}^* {\bm \Omega}_c = \sum_{i=1}^N {\bm H}_i^* {\bm \Omega}_c ,
\end{equation}
where ${\bm \Omega}_r \in  {\mathbb{R}}^{n_c \times k_r}$ and ${\bm \Omega}_c \in  {\mathbb{R}}^{n_r \times k_c}$ are random sampling matrices. For example, we might want a low-rank approximation of a large matrix that needs to be stored out of core and because of time-constraints the elements of the out-of-core matrix can only be accessed once. In that case ${\bm H}_i$ could represent a sparse matrix containing a few elements of ${\bm A}$.

For the application motivating the present study, we seek a low-rank approximation of an $n_r$ by $n_c$ Jacobian matrix ${\bm A} = {\bm J}$, where ${\bm J} {\bm \Omega}_r$ and ${\bm J}^* {\bm \Omega}_c$ can be evaluated simultaneously in parallel by solving $k_c$ direct and $k_r$ adjoint problems \cite{bjarkason2017randomized}. Similar to the streaming model \cref{eq:streammat,eq:streammatmult}, for a transient simulation problem (omitting some details) the direct and adjoint methods essentially boil down to evaluating
\begin{equation} \label{eq:directadjointstream}
{\bm J} {\bm \Omega}_r = \sum_{i=1}^{N_t} {\bm C}_i \left[ {\bm J}_x^i \right]^{-1} {\bm B}_i 
\quad 
\text{and} 
\quad
{\bm J}^* {\bm \Omega}_c = \sum_{i=0}^{N_t - 1} [{\bm G}_{N_t -i}]^* \left( \left[ {\bm J}_x^{N_t -i} \right]^* \right)^{-1} \hat{\bm B}_{N_t -i} ,
\end{equation}
where $N_t$ is the number of simulation time-steps and ${\bm J}_x^i$ is the Jacobian matrix for the forward equations at the $i$th simulation time. Unlike the general \cref{eq:streammat,eq:streammatmult}, the direct and adjoint methods are restricted to tracking forward and backward in time, respectively.

\subsection{The baseline 1-view method} \label{sec:1viewbaselinealg}

Recently, \cite{tropp2017} presented a new 1-view algorithm for fixed-rank approximation of rectangular matrices and compared it with other state-of-the-art randomized 1-view methods. Tropp et al. \cite{tropp2017} compared their 1-view method \cite[Alg. 7]{tropp2017} with two similar 1-view sketching methods based on the work of \cite{cohen2015,woodruff2014} and a more complicated, extended 1-view method based on the work of \cite{boutsidis2016,upadhyay2016}, which uses an additional sketch generated by applying subsampled randomized Fourier transform (SRFT) sampling matrices. The approaches based on \cite{cohen2015} and \cite{woodruff2014} performed the worst. Tropp et al. \cite{tropp2017} concluded that their 1-view algorithm \cite[Alg. 7]{tropp2017} is the preferred method for matrices that have a rapidly decaying spectrum and can therefore be approximated accurately with a low-rank approximation. However, for a small amount of oversampling or matrices with flat singular spectra, then the extended method based on \cite{boutsidis2016,upadhyay2016} performed the best, for a given memory budget. However, the extended SRFT sketching method \cite{tropp2017} can be improved so that it performs as well as the 1-view method suggested by \cite{tropp2017} for matrices with rapidly decaying spectra, see sections \ref{sec:1viewextended} and \ref{sec:1viewresults}.

Here we consider the algorithm recommended by \cite{tropp2017} as the baseline 1-view method. The following subsections look at possible improvements of the baseline algorithm. The baseline 1-view algorithm \cite[Alg. 7]{tropp2017} is as follows:
\begin{enumerate}
\item Given oversampling parameters $l_1$ and $l_2$ with $0 \leq l_1 \leq l_2$, form random sampling matrices ${\bm \Omega}_r \in  {\mathbb{R}}^{n_c \times (p + l_1)}$ and ${\bm \Omega}_c \in  {\mathbb{R}}^{n_r \times (p + l_2)}$ for the range and co-range.
\item Find the range and co-range sketches ${\bm Y}_c = {\bm A} {\bm \Omega}_r$ and ${\bm Y}_r = {\bm A}^* {\bm \Omega}_c$.
\item Find an orthonormal basis for the range ${\bm Q}_c = \orth ({\bm Y}_c)$.
\item Find the least-squares solution ${\bm X}$ to $({\bm \Omega}_c^* {\bm Q}_c) {\bm X} = {\bm Y}_r^*$, i.e. ${\bm X} = ({\bm \Omega}_c^* {\bm Q}_c)^{\dagger} {\bm Y}_r^*$.
\item Then ${\bm A} \approx {\bm Q}_c \llbracket {\bm X} \rrbracket_p$.
\end{enumerate}
The above 1-view method is based on the same elements as the basic 2-view method. The main difference is that in the 1-view method the co-range sketch ${\bm Y}_r$ is constructed independent of the range sketch ${\bm Y}_c$. Therefore, the range and co-range sketches, which are used to form the low-rank approximation, can be found using one pass of the data matrix ${\bm A}$. For the type of Jacobian matrix motivating the present study, this allows us to form a low-rank approximation by solving all the necessary adjoint and direct problems simultaneously in parallel \cite{bjarkason2017randomized}. However, the convenience of constructing an approximation with only one view impacts accuracy. Like \cite{tropp2017}, we generally recommend more accurate methods, such as \cref{alg:gensubit}, for applications that can afford more than one view.

\subsection{Baseline oversampling schemes} \label{sec:baselineoversampling}

For practical applications we need schemes for determining oversampling parameters that work well in practice. For a sketch size budget $T = 2p + l_1 + l_2$, \cite{tropp2017} suggested three oversampling schemes based on empirical evidence and theoretical bounds for the accuracy of their 1-view approach. Tropp et al. \cite{tropp2017} presented oversampling schemes to suit both real valued matrices and complex matrices. However, for the following discussion we assume that we are dealing with a real matrix and $T \geq 2p + 6$.

First, for a budget $T$ and a relatively flat singular spectrum \cite{tropp2017} recommended selecting the oversampling parameters as
\begin{equation} \label{eq:overflat}
l_1 = 
\max \left\{
2 ,
\left\lfloor
(T - 1)
\frac{\sqrt{p (T - p - 2)(1 - 2/(T - 1))} - (p - 1)}{T - 2p - 1}
\right\rfloor
- p
\right\}
\,\,\, \text{and} \,\,\,
l_2 = T - 2p - l_1 .
\end{equation}
Secondly, for a moderately decaying spectrum \cite{tropp2017} recommended using
\begin{equation} \label{eq:overmed}
l_1 = 
\max \left\{
2 ,
\left\lfloor
(T - 1)/3
\right\rfloor
- p
\right\}
\,\,\, \text{and} \,\,\,
l_2 = T - 2p - l_1 .
\end{equation}
Thirdly, when dealing with a matrix that has a rapidly decaying spectrum \cite{tropp2017} recommended choosing 
\begin{equation} \label{eq:overrapid}
l_1 = 
\left\lfloor
(T - 2)/2
\right\rfloor
- p
\,\,\, \text{and} \,\,\,
l_2 = T - 2p - l_1 .
\end{equation}
When using the baseline 1-view method, it is generally not advisable to choose $l_1$ too close to $l_2$ ($l_1 \approx l_2$), e.g. by following \cref{eq:overrapid}. Using $l_1 \approx l_2$ and especially $l_1 =l_2$ is a bad idea, unless the spectral decay is rapid. For peak performance, some prior knowledge about the matrix at hand is needed to determine which of the above oversampling schemes should be chosen. This can be the case for certain applications. However, when reliable prior ideas are unavailable for choosing the oversampling scheme then \cref{eq:overmed} is the favored all-around choice \cite{tropp2017}.

\subsection{Disadvantages of the baseline 1-view method}

Needing prior insights into the data matrix to achieve peak performance and not being able to use $l_1 \approx l_2$ without the risk of losing accuracy are drawbacks which would be good to remedy. Furthermore, using the most versatile oversampling scheme \cref{eq:overmed} is suboptimal for matrices that have rapidly decaying spectra. To obtain the best performance of the 1-view method we should choose $l_1$ close to $l_2$ in cases where ${\bm A}$ has sharp spectral decay, but we need an escape mechanism for cases where the spectral decay is not sharp.

Another reason for wanting to fix the drawbacks of choosing $l_1 = l_2$ is that for some applications $l_2$ can be a computational bottleneck. In that case it is tempting to set $l_1 = l_2$ to try to get as much information as possible given the computational constraints. However, in terms of memory the 1-view method is limited by the need to store the random sampling matrices and the sketching matrices. Tropp et al. \cite{tropp2017} focused on the storage aspect and therefore compared methods in terms of the sketch size budget $T = 2p + l_1 + l_2$, since the storage cost is proportional to $T$. 

The following subsections discuss modifications to the baseline 1-view method to enable good performance for a range of problems with the choice $l_1 = l_2$. In particular, the modifications involve a parameter which can be chosen by post-processing the information contained in the randomized sketches to get close to peak algorithm performance for a range of matrices.

\subsection{A more flexible 1-view method} \label{sec:1ivewmodified}

Choosing $l_1 \approx l_2$ commonly results in solving a badly posed or badly conditioned least squares problem $({\bm \Omega}_c^* {\bm Q}_c) {\bm X} = {\bm Y}_r^*$ and as a result the low-rank approximation can suffer. The coefficient matrix ${\bm \Omega}_c^* {\bm Q}_c$ has $p + l_2$ rows and $p + l_1$ columns, and therefore a way to avoid a badly conditioned coefficient matrix is to sample more for the co-range ($l_2 > l_1$) to give an overdetermined and better conditioned problem. Another option is to follow the methods presented in \cite{woolfe2008,halko2011structure,martinsson2016rsvd} and select ${\bm Q}_c$ based on a TSVD of the range sketch. 

Instead of choosing ${\bm Q}_c$ as an orthonormal basis for the full range of ${\bm Y}_c$, we can choose $[{\bm Q}_c, \sim, \sim] = \tsvd({\bm Y}_c,\, p + l_\text{c})$, where $0 \leq l_\text{c} \leq l_1$. That is, ${\bm Q}_c \in {\mathbb{R}}^{n_r \times (p + l_\text{c})}$ contains the $p + l_\text{c}$ leading left singular vectors of ${\bm Y}_c$. If $l_\text{c}$ is smaller than $l_1$, then using $({\bm \Omega}_c^* {\bm Q}_c) \in {\mathbb{R}}^{(p + l_2) \times (p + l_\text{c})}$ can be expected to give a better conditioned problem, even for $l_1 = l_2$. Finding the left-singular vectors of the range sketch ${\bm Y}_c$ instead of just finding an orthonormal basis for the range sketch does not add much to the cost. Especially considering that applying ${\bm A}$ (to find range and co-range sketches) typically contributes the most to the computational cost and time for applications using the 1-view approach. 

\begin{algorithm}[b!]
  \caption{Randomized 1-view method for TSVD: Tropp variant.}\label{alg:1view}
  \begin{flushleft}
  \textbf{INPUT:} {Matrix ${\bm A} \in {\mathbb{R}}^{n_r \times n_c}$, integers $p > 0$ and $l_2 \geq l_1 \geq l_\text{c} \geq 0$}.\\
  \textbf{RETURNS:} {Approximate rank-$p$ SVD, ${\bm U}_p {\bm \Lambda}_p {\bm V}_p^*$, of ${\bm A}$.}
  \end{flushleft}
  \begin{algorithmic}[1]
  	\STATE{\textbf{a}) ${\bm \Omega}_r = \randn(n_c,\, p+l_1)$ and \textbf{b}) ${\bm \Omega}_c = \randn(n_r,\, p+l_2)$.}
    \STATE{Optional: \textbf{a}) ${\bm \Omega}_r =\text{orth}({\bm \Omega}_r)$ and \textbf{b}) ${\bm \Omega}_c =\text{orth}({\bm \Omega}_c)$.}
    \STATE{\textbf{a}) ${\bm Y}_c = {\bm A} {\bm \Omega}_r$ and \textbf{b}) ${\bm Y}_r = {\bm A}^* {\bm \Omega}_c$.}
    \IF{($l_\text{c} < l_1$)}
    	\STATE{$[{\bm Q}_c,\, {\bm R}_c] = \qr({\bm Y}_c)$.}
    	\STATE{$[\hat{\bm Q}_c,\, \sim,\, \sim] = \tsvd({\bm R}_c,\, p+l_\text{c})$.}
    	\STATE{${\bm Q}_c = {\bm Q}_c \hat{\bm Q}_c$.}
    \ELSE
    	\STATE{$[{\bm Q}_c,\, \sim] = \qr({\bm Y}_c)$.}
    \ENDIF
    \STATE{${[\hat{\bm Q}},\, {\hat{\bm R}}] = \qr({\bm \Omega}_c^* {\bm Q}_c)$.}
    \STATE{${\bm X} = {\hat{\bm R}}^{-1} {\hat{\bm Q}}^* {\bm Y}_r^*$.} 
    \STATE{$[\hat{\bm U}_p,\, {\bm \Lambda}_p,\, {\bm V}_p] = \tsvd({\bm X},\, p)$.}
    \STATE{${\bm U}_p = {\bm Q}_c \hat{\bm U}_p$.}
  \end{algorithmic}
\end{algorithm}

Incorporating this modification into the baseline 1-view method gives \cref{alg:1view}. Choosing $l_\text{c} = l_1$ gives the baseline algorithm as proposed by \cite{tropp2017}. In \cref{alg:1view} we use \textbf{a}) and \textbf{b}) to denote and group together steps that can be performed independently in parallel. \cref{alg:1view} includes an optional step, proposed by \cite{tropp2017}, that orthonormalizes the random test matrices ${\bm \Omega}_r$ and ${\bm \Omega}_c$. This option improves numerical accuracy of the 1-view method when using large oversampling \cite{tropp2017}, but this option was not used for the experiments considered here. Using $l_\text{c}$ and $l_1 = l_2$ can work just as well as and in some cases better than the schemes suggested by \cite{tropp2017}, see section \ref{sec:1viewresults}. For the test matrices we have considered, we have found that using $l_1 = l_2$ and $l_\text{c} \approx l_1 / 2$ is a good all-round parameter choice, similar to using \eqref{eq:overmed} and $l_\text{c} = l_1$.

Similar algorithms were presented by \cite{woolfe2008,halko2011structure,martinsson2016rsvd}; however, in the notation used here the methods presented by \cite{woolfe2008,tropp2017,martinsson2016rsvd} used $l_\text{c} = 0$ and $l_1 = l_2$. The algorithms presented here are more general in terms of the oversampling parameters and though the focus here is on choosing $l_1 = l_2$, the presented algorithms can use oversampling parameters fulfilling $0 \leq l_\text{c} \leq l_1 \leq l_2$. 

The algorithm proposed by \cite[Sect. 5.2]{woolfe2008} finds a low-rank approximation by solving a similar least-squares problem
\begin{equation} \label{eq:lstsqWoolfe}
({\bm \Omega}_c^* {\bm Q}_c) \hat{\bm X} = {\bm Y}_r^* {\bm Q}_r
,
\end{equation}
where ${\bm Q}_c$ and ${\bm Q}_r$ are formed, respectively, by the leading left-singular vectors of ${\bm Y}_c$ and ${\bm Y}_r$. Then the low-rank approximation is taken as ${\bm A} \approx {\bm Q}_c \llbracket \hat{\bm X} \rrbracket_p {\bm Q}_r^*$. Adopting aspects from \cref{alg:1view}, \cref{alg:1viewWoolfe} presents an extension of the Algorithm suggested by \cite[Sect. 5.2]{woolfe2008}. \cref{alg:1viewWoolfe} uses the same notation as \cref{alg:1view} to denote elements that can be evaluated independently in parallel. For instance, dealing with the matrix ${\bm Q}_r$, that does not appear in \cref{alg:1view}, does not necessarily increase computational time by much since it can be dealt with in parallel with other core tasks. For the matrices we considered, \cref{alg:1view} and \cref{alg:1viewWoolfe} performed similarly in terms of accuracy. For the tests in section \ref{sec:1viewresults} we present results using \cref{alg:1view} but omit showing results for \cref{alg:1viewWoolfe} as they are comparable. 

\begin{algorithm}[t!]
  \caption{Randomized 1-view method for TSVD: Woolfe variant.}\label{alg:1viewWoolfe}
  \begin{flushleft}
  \textbf{INPUT:} {Matrix ${\bm A} \in {\mathbb{R}}^{n_r \times n_c}$, integers $p > 0$ and $l_2 \geq l_1 \geq l_\text{c} \geq 0$}.\\
  \textbf{RETURNS:} {Approximate rank-$p$ SVD, ${\bm U}_p {\bm \Lambda}_p {\bm V}_p^*$, of ${\bm A}$.}
  \end{flushleft}
  \begin{algorithmic}[1]	
    \STATE{\textbf{a}) ${\bm \Omega}_r = \randn(n_c,\, p+l_1)$ and \textbf{b}) ${\bm \Omega}_c = \randn(n_r,\, p+l_2)$.}
    \STATE{Optional: \textbf{a}) ${\bm \Omega}_r =\text{orth}({\bm \Omega}_r)$ and \textbf{b}) ${\bm \Omega}_c =\text{orth}({\bm \Omega}_c)$.}
    \STATE{\textbf{a}) ${\bm Y}_c = {\bm A} {\bm \Omega}_r$   and   \textbf{b}) ${\bm Y}_r = {\bm A}^* {\bm \Omega}_c$.}
    \STATE{\textbf{a}) $[{\bm Q}_c,\, {\bm R}_c] = \qr({\bm Y}_c)$ and \textbf{b}) $[{\bm Q}_r,\, {\bm R}_r] = \qr({\bm Y}_r)$.}
    \STATE{\textbf{a}) $[\hat{\bm Q}_c,\, \sim,\, \sim] = \tsvd({\bm R}_c,\, p+l_\text{c})$ and \textbf{b}) $[\hat{\bm Q}_r,\, \sim,\, \sim] = \tsvd({\bm R}_r,\, p+l_\text{c})$.}
    	\STATE{\textbf{a}) ${\bm Q}_c = {\bm Q}_c \hat{\bm Q}_c$ and \textbf{b}) ${\bm Q}_r = {\bm Q}_r \hat{\bm Q}_r$.}
    \STATE{Least-squares solution: $({\bm \Omega}_c^* {\bm Q}_c) \hat{\bm X} = ({\bm Y}_r^* {\bm Q}_r)$.}
    \STATE{$[\hat{\bm U}_p,\, {\bm \Lambda}_p,\, \hat{\bm V}_p] = \tsvd(\hat{\bm X},\, p)$.}
    \STATE{\textbf{c}) ${\bm U}_p = {\bm Q}_c \hat{\bm U}_p$ and \textbf{d}) ${\bm V}_p = {\bm Q}_r \hat{\bm V}_p$.}
  \end{algorithmic}
\end{algorithm}

Following \cite{woolfe2008}, \cite{halko2011structure,martinsson2016rsvd} proposed a low-rank approximation which can also be written as ${\bm A} \approx {\bm Q}_c \llbracket \hat{\bm X} \rrbracket_p {\bm Q}_r^*$. The distinction is that \cite{halko2011structure,martinsson2016rsvd} suggested finding $\hat{\bm X}$ as the least-squares solution to \cref{eq:lstsqWoolfe} and a related equation $({\bm \Omega}_r^* {\bm Q}_r) \hat{\bm X}^* = {\bm Y}_c^* {\bm Q}_c$.
We are not aware of any research using the variant proposed by \cite{halko2011structure,martinsson2016rsvd}. However, we have found that for the matrices we have considered the \cite{halko2011structure,martinsson2016rsvd} variant mostly gave similar accuracy to the simpler algorithms presented above. Considering that result, and the additional computational complexity and expense of dealing with the additional least-squares equations, we do not consider the \cite{halko2011structure,martinsson2016rsvd} variant further.

\subsection{Oversampling when using $l_\text{c}$}

\subsubsection{Basic oversampling scheme}

Like the baseline algorithm, the modified algorithm requires some practical ways of choosing the oversampling parameters. The oversampling parameter $l_\text{c}$ can be chosen as
\begin{equation} \label{eq:lcutfixedratio} 
l_\text{c} = \lfloor \alpha l_1 \rfloor
,
\quad 
\text{with}
\quad 
0 \leq \alpha \leq 1
.
\end{equation}
Here we are concerned with schemes when $l_1 \simeq l_2$ or more precisely $l_1 = \lfloor (l_1 + l_2)/2 \rfloor = \lfloor T/2 \rfloor - p$. In that case, $l_\text{c}$ functions similarly in relation to $l_1$ as $l_1$ functions in relation to $l_2$ when using the baseline 1-view method. That is, choosing $l_\text{c}$ close to $l_1$ works well when the spectrum decays rapidly. Choosing $l_1 \simeq l_2$ and $\alpha = 1/2$ can work quite well for various matrices in a similar fashion to using \eqref{eq:overmed} with the baseline method.

\subsubsection{A minimum variance approach}

After all the effort of forming the sketches ${\bm Y}_c$ and ${\bm Y}_r$ we can try to apply some type of post-processing to determine an optimal $l_\text{c}$ parameter given the information contained in the sketches and sampling matrices. One simple idea could be to find the spectrum of ${\bm X}$ for some provisional $l_\text{c}$. Then if the provisional singular spectrum suggests, for instance, that the spectrum of ${\bm A}$ is likely to decay rapidly then a large $l_\text{c}$ can be chosen. However, the drawback is that it is unclear how this should be done, and it may require finding a complicated empirical selection scheme.

An approach we have found to work well for the problems we have considered looks at how the spectrum of ${\bm X}$ varies for all possible values of $l_\text{c}$ given ${\bm Y}_c$ and ${\bm Y}_r$. The approach is based on the observation, from multiple experiments, that the singular values of ${\bm X}$ tend to have large variance when choosing $l_\text{c}$ close to $l_1$ for $l_1 \simeq l_2$, because of the ill-conditioning issue. Furthermore, the spectrum of ${\bm X}$ can also exhibit large variance when choosing $l_\text{c}$ close to $0$ and often results in inferior approximations. We therefore consider finding $l_\text{c}$ that locally gives a matrix ${\bm X} (l_\text{c})$ with a singular spectrum that has minimum variance in some sense.

Defining the vector of locally normalized singular values ${\bm s} (l_\text{c})$, for $0 < l_\text{c} < l_1$ as
\begin{equation}
{\bm s} (l_\text{c}) :=
\left[ 
\frac{\lambda_1(l_\text{c}-1)}{\lambda_1(l_\text{c})}
,\, \cdots ,\,
\frac{\lambda_p(l_\text{c}-1)}{\lambda_p(l_\text{c})}
,\,
\frac{\lambda_1(l_\text{c})}{\lambda_1(l_\text{c})}
,\, \cdots ,\,
\frac{\lambda_p(l_\text{c})}{\lambda_p(l_\text{c})}
,\,
\frac{\lambda_1(l_\text{c}+1)}{\lambda_1(l_\text{c})}
,\, \cdots ,\,
\frac{\lambda_p(l_\text{c}+1)}{\lambda_p(l_\text{c})}
\right]
\end{equation}
and for $l_\text{c} = 0$
\begin{equation}
{\bm s} (l_\text{c}) :=
\left[ 
\frac{\lambda_1(l_\text{c})}{\lambda_1(l_\text{c})}
,\, \cdots ,\,
\frac{\lambda_p(l_\text{c})}{\lambda_p(l_\text{c})}
,\,
\frac{\lambda_1(l_\text{c}+1)}{\lambda_1(l_\text{c})}
,\, \cdots ,\,
\frac{\lambda_p(l_\text{c}+1)}{\lambda_p(l_\text{c})}
\right]
;
\end{equation}
the \textit{minimum variance} $l_\text{c}$ is found according to
\begin{equation} \label{eq:MinVar}
l_\text{c}^\text{MinVar} := \operatorname*{argmin}_{l_\text{c}} 
\,
\operatorname{Var}
[ {\bm s} (l_\text{c}) ]
,
\quad 
\text{with}
\quad 
0 \leq l_\text{c} < l_1
.
\end{equation}
Note that this post-processing does not require re-evaluating the sketches. We just need to generate the left singular vectors of ${\bm Y}_c$ and truncate them to $p + l_\text{c}$ for each trial $l_\text{c}$. For efficiency we evaluate ${\bm \Omega}_c^* {\bm Q}_c$ for $l_\text{c} = l_1$, where in this context ${\bm Q}_c$ denotes the left singular vectors of ${\bm Y}_c$. Then for each trial $l_\text{c}$ parameter we only need the first $p + l_\text{c}$ columns of ${\bm \Omega}_c^* {\bm Q}_c$.

The additional computational operations are $\mathcal{O}(l_1 [p + l_1]^2 n_r)$ ($\mathcal{O}([p + l_1]^2 n_r)$ for each possible $l_\text{c}$) when using \cref{alg:1view}. To improve efficiency it is possible to instead apply a variant of \cref{alg:1viewWoolfe} at a cost of $\mathcal{O}([p + l_1]^2 n_r + l_1 [p + l_1]^3)$. The $\mathcal{O}([p + l_1]^2 n_r)$ term comes from evaluating ${\bm Y}_r^* {\bm Q}_r$ once where ${\bm Q}_r$ is an orthonormal basis for ${\bm Y}_r$ (can skip truncating ${\bm Q}_r$). The remainder of the cost comes from solving the smaller least-squares problem $l_1 + 1$ times. This additional post-processing cost, for finding a better $l_\text{c}$, is potentially worth it for applications using the 1-view approach where the dominant computational cost comes from evaluating the matrix sketches. Furthermore, note that the computations pertaining to each trial $l_\text{c}$ can be carried out independently in parallel to the other trial variables.

Choosing $l_\text{c}$ by \eqref{eq:MinVar} worked well for the problems we considered and this minimum variance approach can perform quite close to the peak performance for a range of matrices, see section \ref{sec:1viewresults}. Nevertheless, the basic post-processing approach presented here to find a good $l_\text{c}$ parameter can be improved. For instance, when the rank $p$ is small (e.g., $p=1$) then the vector ${\bm s} (l_\text{c})$ presents a small sample size for estimating a variance. We have noticed that the minimum variance approach is less reliable for $p=1$. In that case it may be better to temporarily use a larger rank as input into the 1-view algorithm of choice and truncate afterwards, though this was not done for the test cases reported here.

\subsection{Extended sketching scheme} \label{sec:1viewextended}

In \cite{tropp2017}, an extended 1-view variant based on \cite{boutsidis2016,upadhyay2016} had the best performance for small memory budgets and matrices with flat spectra. However, for greater memory budgets and rapidly decaying spectra, \cite{tropp2017} found the baseline 1-view method to outperform the extended approach.

For the extended approach we assume that $l_1 = l_2$ and introduce additional subsampled randomized Fourier transform (SRFT) matrices
\begin{equation}
{\bm \Phi}_c \in  {\mathbb{R}}^{n_c \times s} ; \quad
{\bm \Phi}_r \in  {\mathbb{R}}^{n_r \times s} ,
\end{equation}
where $s \geq p + l_1$. Here, for $i \in \{ c, r \}$,
\begin{equation}
{\bm \Phi}_i = {\bm D}_i {\bm F}_i {\bm P}_i
\end{equation}
is a random SRFT matrix, where ${\bm D}_i \in {\mathbb{R}}^{n_i \times n_i}$ is a diagonal matrix with elements drawn independently from a Rademacher distribution (diagonal entries are $\pm 1$), ${\bm F}_i \in {\mathbb{R}}^{n_i \times n_i}$ denotes a discrete cosine transform matrix when ${\bm A}$ only has real entries or a discrete Fourier transform matrix when ${\bm A}$ has complex entries, and ${\bm P}_i \in {\mathbb{R}}^{n_i \times s}$ has $s$ columns that are sampled from the $n_i \times n_i$ identity matrix without replacement. Advantages of an SRFT sampling matrix are its low storage cost (only need to store $n_i + s$ entries for ${\bm D}_i$ and ${\bm P}_i$) and for a dense ${\bm A}$ then ${\bm A} {\bm \Phi}_c$ can be evaluated using $\mathcal{O}(n_r n_c \log{s})$ operations \cite{halko2011structure, liberty2007, woolfe2008} instead of the usual $\mathcal{O}(n_r n_c s)$ operations. However, we should note that we do not consider the extended approach discussed here as a viable option when approximating the types of Jacobian matrices motivating the present study. The reason is that for a Jacobian matrix, when using direct and adjoint methods to find ${\bm A}$ or its transpose times a matrix ${\bm \Phi}_i$, the cost scales with the number of columns $s$. The SRFT matrices need more columns than the standard Gaussian random matrices and using SRFT matrices is therefore not advantageous in the context of using adjoint and direct methods.

Using the extended approach we again find range and co-range sketches, but also an extended sketch ${\bm Z}$ as follows
\begin{equation}
{\bm Y}_c = {\bm A} {\bm \Omega}_r ; \quad
{\bm Y}_r = {\bm A}^* {\bm \Omega}_c ; \quad
{\bm Z} = {\bm \Phi}_r^* {\bm A} {\bm \Phi}_c .
\end{equation}
Subsequently, we evaluate the following QR-factorizations
\begin{align}
& {\bm Q}_c {\bm R}_c \vcentcolon = {\bm Y}_c ; \quad
{\bm Q}_r {\bm R}_r \vcentcolon = {\bm Y}_r ; \\
& {\bm U}_c {\bm T}_c \vcentcolon = {\bm \Phi}_r^* {\bm Q}_c ; \quad
{\bm U}_r {\bm T}_r \vcentcolon = {\bm \Phi}_c^* {\bm Q}_r .
\end{align}
Finally, a low-rank approximation can be found for the extended approach according to \cite{tropp2017}
\begin{equation} \label{eq:bwz}
\hat{\bm A}_\text{bwz} \vcentcolon = 
{\bm Q}_c {\bm T}_c^{\dagger} \left\llbracket {\bm U}_c^* {\bm Z} {\bm U}_r \right\rrbracket_p
({\bm T}_r^*)^{\dagger} {\bm Q}_r^*  .
\end{equation}
However, we have noticed that a more accurate low-rank approximation can be found by using instead
\begin{equation} \label{eq:bwzimproved}
\hat{\bm A}_\text{bwz2} \vcentcolon = 
{\bm Q}_c \left\llbracket {\bm T}_c^{\dagger} {\bm U}_c^* {\bm Z} {\bm U}_r
({\bm T}_r^*)^{\dagger} \right\rrbracket_p {\bm Q}_r^*  .
\end{equation}
Defining the QR-factorization $\hat{\bm Q} \hat{\bm R} := {\bm \Omega}_c^* {\bm Q}_c$ used in \cref{alg:1view}; the difference between \eqref{eq:bwz} and \eqref{eq:bwzimproved} is analogous to the difference between using $\hat{\bm A}_\text{woo} := {\bm Q}_c \hat{\bm R}^{\dagger} \llbracket {\hat{\bm Q}}^* {\bm Y}_r^*  \rrbracket_p$, and the baseline 1-view method $\hat{\bm A}_\text{tropp} := {\bm Q}_c \llbracket \hat{\bm R}^{\dagger}  {\hat{\bm Q}}^* {\bm Y}_r^*  \rrbracket_p$. The approximation $\hat{\bm A}_\text{woo}$ is an adaptation made by \cite{tropp2017} for an approach given by \cite{woodruff2014}. As previously mentioned, \cite{tropp2017} found that the $\hat{\bm A}_\text{woo}$ variant did not perform nearly as well as the baseline $\hat{\bm A}_\text{tropp}$.

As mentioned by \cite{tropp2017} it is unclear how to choose the oversampling factors for the above extended approach. Storing the sampling matrices and sketches requires $(2p + l_1 + l_2 + 1)(n_r + n_c) + s(s + 2)$ numbers \cite{tropp2017}. Tropp et al. \cite{tropp2017} compared the extended approach to the baseline method by considering oversampling parameters such that $(2p + l_1 + l_2 + 1)(n_r + n_c) + s(s + 2)$ approximately equals the memory allocated to the simpler 1-view methods $T (n_r + n_c)$. We do the same when comparing the extended approach with \cref{alg:1view}.

For the matrices tested here, we found that using \eqref{eq:bwzimproved} with
\begin{equation} \label{eq:bwzafac8}
l_1 = l_2 = 0.8 \lfloor T/2 - p \rfloor
\end{equation}
and choosing $s$ as the largest value fulfilling $(2p + l_1 + l_2 + 1)(n_r + n_c) + s(s + 2) \approx T (n_r + n_c)$ resulted in approximation errors close to the best performance. In terms of memory, this approach performed the best out of the 1-view schemes mentioned above, see section \ref{sec:1viewresults}.

\subsection{Matrices streamed row wise} \label{sec:singlepassRowMajor}

For the special case where the elements of a large matrix are read into random-access memory a few rows at a time, it is possible to attain the accuracy of a 2-view method while only accessing the elements of the matrix once \cite{yu2017single,yu2018singlepower}. Looping over all the rows in ${\bm A}$ only once the matrices ${\bm Y}_c = {\bm A} {\bm \Omega}_r$ and $\hat{\bm Y}_r = {\bm A}^* {\bm Y}_c$ can be found according to ${\bm Y}_c [i,\, :] = {\bm A}[i,\, :] {\bm \Omega}_r$ and $\hat{\bm Y}_r = {\bm A}^* {\bm Y}_c = \sum_i ({\bm A}[i,\, :])^* {\bm Y}_c [i,\, :]$, where ${\bm A}[i,\, :]$ denotes the $i$th row of ${\bm A}$. Evaluating $[{\bm Q}_c ,\, {\bm R}_c] = \qr ({\bm Y}_c)$ and finding the least-squares solution ${\bm B}$ to ${\bm B}^* {\bm R}_c = \hat{\bm Y}_r$, then we have a QB-factorization such that ${\bm Q}_c {\bm B} \approx {\bm A}$. In \cite{yu2017single}, this QB-factorization was found by an iterative blocked algorithm instead of forming ${\bm Y}_c := {\bm Q}_c {\bm R}_c$ and solving for ${\bm B}$ directly.

If ${\bm R}_c$ is invertible then ${\bm B}^* = \hat{\bm Y}_r {\bm R}_c^{-1} = {\bm A}^* {\bm Q}_c$ and we get the basic randomized 2-view approximation in a single pass over the rows of ${\bm A}$. However, it is worth noting that the least-squares problem ${\bm B}^* {\bm R}_c = \hat{\bm Y}_r$ can be badly conditioned. The conditioning issue can, for instance, be dealt with by finding a TSVD of ${\bm R}_c$ and estimating a solution using a pseudoinverse.

The above 1-view procedure is advantageous for large matrices that are stored out-of-core in row-major or column-major format. Of course, when the matrix is stored in column major format and accessed column wise, then the above procedure would be applied to the matrix's transpose \cite{yu2017single}. However, this approach does not apply to general streaming matrices, nor the Jacobian matrices motivating this study.

In \cite{yu2018singlepower}, the above single-pass ideas were extended to the power iteration approach to obtain higher quality approximations of large matrices accessed row wise. The following section discusses generalizing a pass-efficient randomized block Krylov algorithm and combining randomized block Krylov methods with the ideas of \cite{yu2017single,yu2018singlepower}.

\section{Pass-efficient randomized block Krylov methods} \label{sec:blockKrylovMethods}

As mentioned previously, if the 1-view approach is not suitably accurate, then more accurate methods such as \cref{alg:gensubit} should be considered. While \cref{alg:gensubit} is more pass efficient than classical iterative methods, which involve matrix vector multiplication, its pass efficiency can be improved by using a randomized block Krylov approach \cite{drineas2017,halko2011pca,martinsson2010normalized,musco2015krylov,rokhlin2009}. The next subsection gives a brief overview of the current state-of-the art for randomized block Krylov methods. The subsections that follow, outline two extensions of the block Krylov approach. The first approach, like \cref{alg:gensubit}, enables the user to specify any budget of views $v \geq 2$. The second approach incorporates the ideas discussed in section \ref{sec:singlepassRowMajor} to give a more pass-efficient method for large matrices stored in row-major format.

\subsection{A prototype block Krylov method}

Randomized methods were presented by
\cite{rokhlin2009,martinsson2010normalized,halko2011pca} which combine power or subspace iteration with a block Krylov approach, for improved accuracy given a number of power iterations. The applicability of this type of randomized block Krylov approach was demonstrated by \cite{halko2011pca} for the task of approximating data sets that are too large to fit into random-access memory. Therefore, the data needed to be stored out-of-core and accessing a matrix was time-consuming. Likewise, \cite{musco2015krylov} demonstrated the advantages of using a randomized block Krylov method for large data sets. Furthermore, their analysis suggests that the block Krylov approach can achieve a desired accuracy using fewer matrix views than a standard randomized subspace iteration method and using less time. Recently, \cite{drineas2017} presented a theoretical analysis of the randomized block Krylov method for the problem of approximating the subspace spanned by the dominant left-singular vectors of a matrix.

Following the work of
\cite{drineas2017,halko2011pca,martinsson2010normalized,musco2015krylov,rokhlin2009}, an approximate basis ${\bm Q}_c$ for the range of ${\bm A}$ can be found as the basis of a Krylov subspace given by
\begin{equation}
{\bm K}_c = \range (
{\bm A} {\bm \Omega}_r \,\,\,\,
[{\bm A} {\bm A}^*] {\bm A} {\bm \Omega}_r \,\,\,\,
\cdots \,\,\,\,
[{\bm A} {\bm A}^*]^q {\bm A} {\bm \Omega}_r
) ,
\end{equation}
where the random sampling matrix ${\bm \Omega}_r \in \mathbb{R}^{n_c \times (p+l)}$ is defined as in the discussion for the subspace iteration method. A low-rank approximation can be generated by ${\bm Q}_c \llbracket {\bm Q}_c^* {\bm A} \rrbracket_p$. Possible improvements over the simpler subspace iteration method come from using a larger and more complete basis ${\bm Q}_c$. This is the prototype algorithm used by \cite{drineas2017,halko2011pca,martinsson2010normalized,musco2015krylov,rokhlin2009}. For numerical stability it is best to evaluate $[{\bm A} {\bm A}^*]^j {\bm A} {\bm \Omega}_r$ using subspace iteration \cite{martinsson2010normalized,musco2015krylov}.

\subsection{A more general block Krylov method for $v \geq 2$}

Like the standard subspace iteration method, the above prototype block Krylov method uses an even number of $2(q+1)$ views. Again, we can generalize the above prototype approach to work for any views $v \geq 2$. For an odd number of views we simply form a basis ${\bm Q}_r$ for the co-range using a Krylov subspace defined as
\begin{equation} \label{eq:krylovCoRange}
{\bm K}_r = \range (
[{\bm A}^* {\bm A}] {\bm \Omega}_r \,\,\,\,
[{\bm A}^* {\bm A}]^2 {\bm \Omega}_r \,\,\,\,
\cdots \,\,\,\,
[{\bm A}^* {\bm A}]^{(v-1)/2} {\bm \Omega}_r
) .
\end{equation}

Combining the above thoughts we arrive at \cref{alg:blkKrylov}, which is a more flexible block Krylov method than that used by \cite{drineas2017,halko2011pca,martinsson2010normalized,musco2015krylov,rokhlin2009} and allows $v \geq 2$. For an even number of views \cref{alg:blkKrylov} uses ${\bm A} \approx {\bm Q}_c {\bm Q}_c^* {\bm A}$ otherwise it uses ${\bm A} \approx {\bm A} {\bm Q}_r {\bm Q}_r^*$. When the goal is to approximate the left singular vectors of a matrix ${\bm J}$ the same rule of thumb regarding accuracy applies to \cref{alg:blkKrylov} as to \cref{alg:gensubit}: apply \cref{alg:blkKrylov} to ${\bm J}^*$ for even $v$ but ${\bm J}$ for odd $v$. This is the opposite to how the block Krylov approach was used by \cite{drineas2017,musco2015krylov}.

\begin{algorithm}
\caption{Randomized SVD using generalized block Krylov method.} \label{alg:blkKrylov}
\begin{flushleft}
\textbf{INPUT:} Matrix ${\bm A} \in {\mathbb{R}}^{n_r \times n_c}$, integers $p > 0$, $l \geq 0$ and $v \geq 2$.\\
\textbf{RETURNS:} Approximate rank-$p$ SVD, ${\bm U}_p {\bm \Lambda}_p {\bm V}_p^*$, of ${\bm A}$.
\end{flushleft}
\begin{algorithmic}[1]
\STATE{${\bm Q}_r^0 = \randn(n_c,\, p+l)$.}
\FOR{$j = 1$ \textbf{to} $v-1$}
	\IF{$j$ is odd}
        \STATE{\textbf{if} $j<v-1$ \textbf{then} $[{\bm Q}_c^j,\, \sim] = \qr({\bm A} {\bm Q}_r^{j-1})$; \textbf{else} ${\bm Q}_c^j= {\bm A} {\bm Q}_r^{j-1}$.}
    \ELSE
        \STATE{\textbf{if} $j<v-1$ \textbf{then}  $[{\bm Q}_r^j,\, \sim] = \qr({\bm A}^* {\bm Q}_c^{j-1})$; \textbf{else} ${\bm Q}_r^j= {\bm A}^* {\bm Q}_c^{j-1}$.}
     \ENDIF
\ENDFOR
\IF{$v$ is even}
	\STATE{${\bm K}_c = [{\bm Q}_c^1 \,\,\,\, {\bm Q}_c^3 \,\,\,\, \cdots \,\,\,\, {\bm Q}_c^{v-1}]$.}
    \STATE{$[{\bm Q}_c,\, \sim] = \qr({\bm K}_c)$.}
    \STATE{$[{\bm Q}_r,\, {\bm R}_r] = \qr({\bm A}^* {\bm Q}_c)$.}
	\STATE{$[\hat{{\bm V}}_p,\, {\bm \Lambda}_p,\, \hat{\bm U}_p] = \tsvd({\bm R}_r,\, p)$.}
\ELSE
	\STATE{${\bm K}_r = [{\bm Q}_r^2 \,\,\,\, {\bm Q}_r^4 \,\,\,\, \cdots \,\,\,\, {\bm Q}_r^{v-1}]$.}
    \STATE{$[{\bm Q}_r,\, \sim] = \qr({\bm K}_r)$.}
    \STATE{$[{\bm Q}_c,\, {\bm R}_c] = \qr({\bm A} {\bm Q}_r)$.}
	\STATE{$[\hat{{\bm U}}_p,\, {\bm \Lambda}_p,\, \hat{\bm V}_p] = \tsvd({\bm R}_c,\, p)$.}
\ENDIF
\STATE{${\bm U}_p = {\bm Q}_c \hat{\bm U}_p$ and ${\bm V}_p = {\bm Q}_r \hat{\bm V}_p$.}
\end{algorithmic}
\end{algorithm}

Notice that \cref{alg:blkKrylov} is the same as \cref{alg:gensubit} for $v < 4$. The main computational cost involved in \cref{alg:blkKrylov} is from multiplying ${\bm A}$ with $(v + \lfloor v/2 \rfloor - 1)(p+l)$ vectors and the cost of the QR-factorizations is $\mathcal{O} (v^2 n_c [p+l]^2)$, assuming $n_r = n_c$. This is a higher computational cost for a given $v$ than that of \cref{alg:gensubit}, which requires multiplying ${\bm A}$ with $v (p+l)$ vectors and the cost of the QR-factorizations is $\mathcal{O} (v n_c [p+l]^2)$. However, \cref{alg:blkKrylov} may require fewer views than \cref{alg:gensubit} to achieve a desired accuracy when \cref{alg:gensubit} needs more than four views.

For $l=0$ and an even number of views, \cite{musco2015krylov} showed that \cref{alg:blkKrylov} requires $\mathcal{O} (1/\sqrt{\epsilon})$ views to achieve $\norm{{\bm A} - {\bm Q}_c {\bm Q}_c^* {\bm A}} \leq (1 + \epsilon) \norm{{\bm A} - \llbracket {\bm A} \rrbracket_p}$; however, \cref{alg:gensubit} requires $\mathcal{O} (1/\epsilon)$ views \cite{musco2015krylov}. Since their analysis assumed no oversampling it is unclear how the methods compare in practice when oversampling is applied. By oversampling a bit more the accuracy of \cref{alg:gensubit} can be increased and the oversampling can be chosen such that the cost of \cref{alg:gensubit} is similar to the cost of \cref{alg:blkKrylov}. Which approach should be chosen in practice?

The results in section \ref{sec:resultsBlockKrylov} suggest that \cref{alg:gensubit} with additional oversampling can be competitive with \cref{alg:blkKrylov} when the matrix has reasonably rapid spectral decay. \cref{alg:blkKrylov}, however, is more advantageous for problems where the tail of the singular spectrum is flat. This can be the case when dealing with noisy data matrices, which was the problem motivating \cite{musco2015krylov}.

\subsection{A block Krylov method for matrices stored row wise}

The ideas discussed in section \ref{sec:singlepassRowMajor} can also be applied in the block Krylov framework to improve pass-efficiency when dealing with large matrices that are accessed row by row. In that case, ${\bm A}^* {\bm A}$ times a matrix can be evaluated in a single-pass. Therefore, the Krylov subspace for the co-range of ${\bm A}$ given in \eqref{eq:krylovCoRange} can be evaluated using half the number of views needed by \cref{alg:blkKrylov}. Using this approach could be advantageous when dealing with large data matrices stored out-of-core.

\section{Experimental results} \label{sec:results}

\subsection{Test matrices}

For testing the randomized low-rank approximation methods discussed in the present study we consider seven test matrices. The first is a $6,135$ by $24,000$ Jacobian matrix ${\bm S}_\text{D}$ arising from a synthetic geothermal inverse problem like the one examined in \cite{bjarkason2017randomized}. The leading, normalized singular values of ${\bm S}_\text{D}$ are depicted in \cref{fig:SingularValues}. The other test matrices are the same as some of those used by \cite{tropp2017}; two matrices that have a singular spectrum with a flat tail, two where the trailing singular values decay polynomially and two where the decay of the tail is exponential.

\begin{figure}[t!]
  \centering
\includegraphics[width=0.8\textwidth]{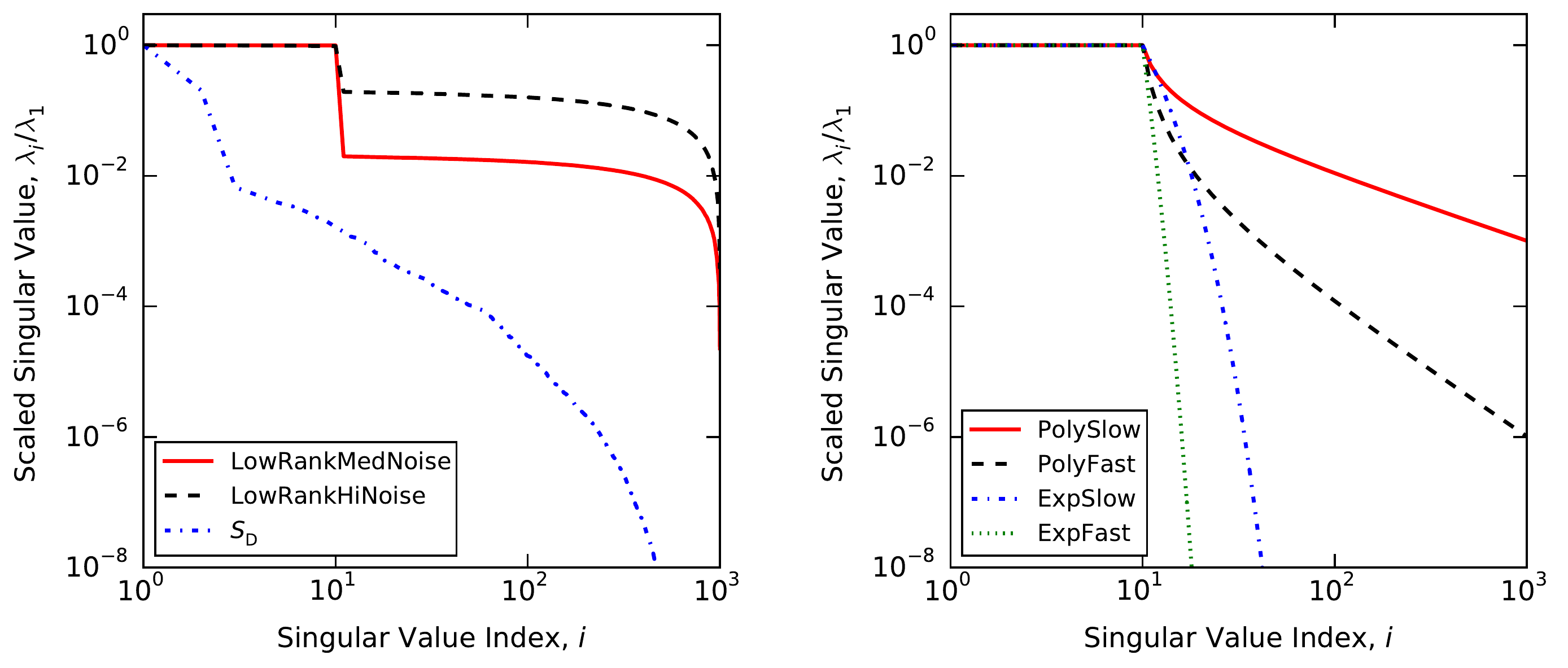}
  \caption{Scaled singular values for the test matrices.}
  \label{fig:SingularValues}
\end{figure}

Defining $R=10$ and $n=1,000$, each of the flat matrices is a single realization of a matrix which is a combination of a matrix of rank $R$ and a random noise matrix. That is, these flat or low-rank plus noise matrices are $n \times n$ matrices given by
\begin{equation}
\diag(\underbrace{1,\, \cdots ,\, 1}_\textrm{R},\, 0,\, \cdots ,\, 0)
+
\sqrt{ 
\frac{\eta R}{2 n^2}
}
\left( {\bm G} + {\bm G}^* \right) ,
\end{equation}
where the elements of ${\bm G} \in {\mathbb{R}}^{n \times n}$ are drawn independently from a standard Gaussian distribution. The noisy test matrices are a medium noise matrix (\texttt{LowRankMedNoise}) with $\eta = 10^{-2}$ and a high noise matrix (\texttt{LowRankHiNoise}) with $\eta = 1$.

The polynomial decay matrices are diagonal matrices of the following type:
\begin{equation}
\diag ( \underbrace{1,\, \cdots ,\, 1}_\textrm{R},\, 
2^{-\rho},\, 3^{-\rho},\, \cdots ,\, [n - R +1]^{-\rho} ) .
\end{equation}
The first polynomially decaying matrix has a slow decay $\rho = 1$ (\texttt{PolySlow}); the other decays faster with $\rho = 2$ (\texttt{PolyFast}). Similarly, the exponentially decaying matrices are defined as
\begin{equation}
\diag ( \underbrace{1,\, \cdots ,\, 1}_\textrm{R},\, 
10^{- \theta},\, 10^{- 2 \theta},\, \cdots ,\, 10^{- (n - R) \theta} ) .
\end{equation}
We use one matrix with a rapid decay $\theta = 1$ (\texttt{ExpFast}) and another with a slower decay $\theta = 0.25$ (\texttt{ExpSlow}). The singular spectra of the above test matrices are shown in \cref{fig:SingularValues}.

\subsection{Quantifying algorithmic accuracy}

To quantify the performance of the randomized algorithms, the generated rank-$p$ approximations $\hat{\bm A}_\text{out}$ are compared with the optimal rank-$p$ factorization of the matrix of interest ${\bm A}$. In terms of the Frobenius and spectral norms the optimal rank-$p$ factorization is the rank-$p$ TSVD of ${\bm A}$, denoted by $\llbracket {\bm A} \rrbracket_p$.

For some of the tests, we follow \cite{tropp2017} and compare the relative Frobenius norm errors, defined as
\begin{equation} \label{eq:relfroberr}
\text{Relative Frobenius Error } :=
\frac{ \norm{ {\bm A} - \hat{\bm A}_\text{out} }_\text{F} }{ \norm{ {\bm A} - \llbracket {\bm A} \rrbracket_p }_\text{F} } - 1 .
\end{equation}
Likewise, we also look at the relative spectral norm errors, defined as
\begin{equation} \label{eq:relspecerr}
\text{Relative Spectral Error } :=
\frac{ \norm{ {\bm A} - \hat{\bm A}_\text{out} } }{ \norm{ {\bm A} - \llbracket {\bm A} \rrbracket_p } } - 1 .
\end{equation}

\subsection{Effectiveness of the generalized subspace iteration method}

\subsubsection{Standard application}

\begin{figure}[b!]
  \centering
\includegraphics[width=\textwidth]{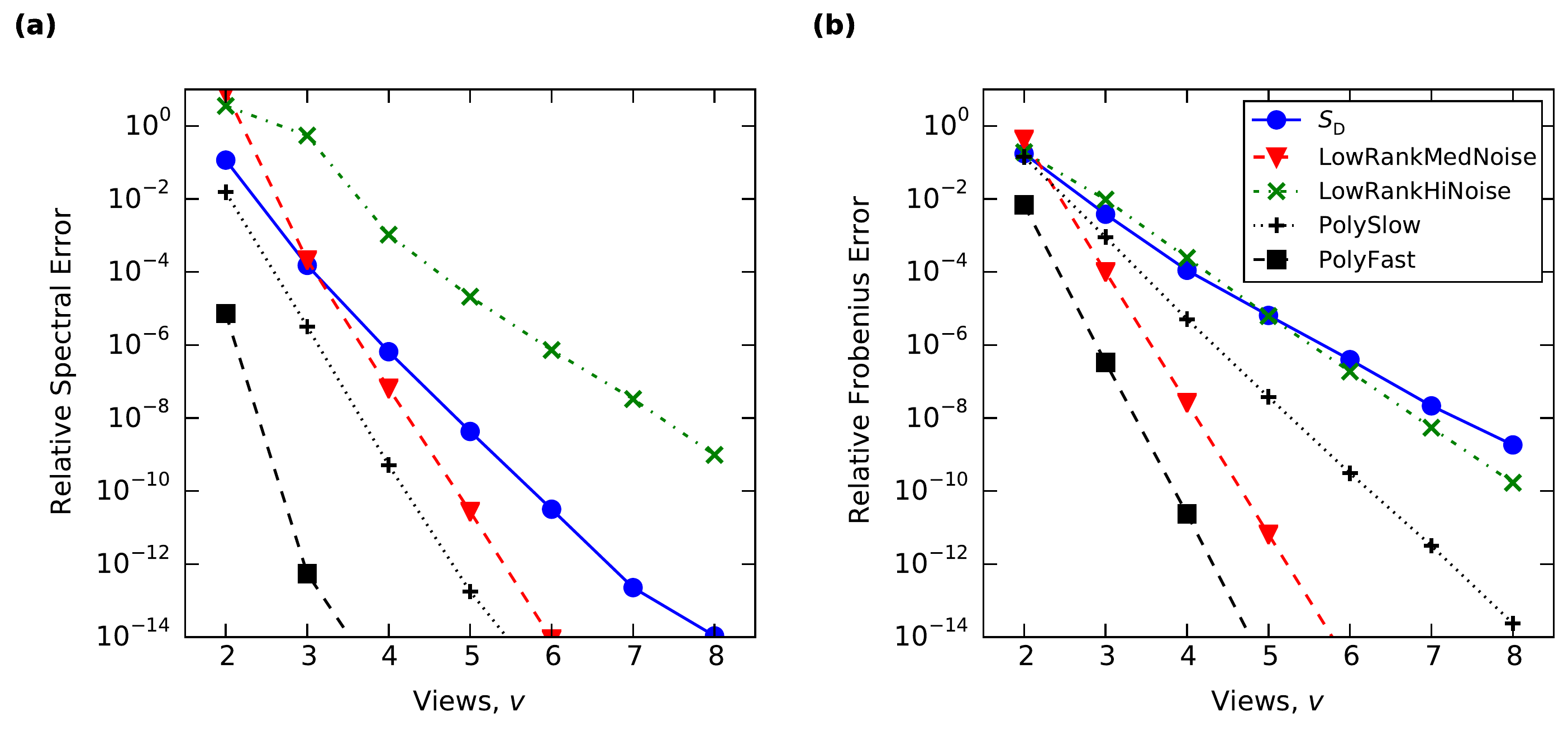}
  \caption{Matrix approximation errors when using the generalized subspace iteration method (\cref{alg:gensubit}), with a target rank $p = 10$ and oversampling parameter $l = 10$. a) Relative spectral norm error \eqref{eq:relspecerr}. b) Relative Frobenius norm error \eqref{eq:relfroberr}.}
  \label{fig:GenSubSpecFrobErrs}
\end{figure}

\cref{fig:GenSubSpecFrobErrs} shows the decay of the spectral and Frobenius norm errors as functions of matrix views when using the generalized subspace iteration method (\cref{alg:gensubit}). To generate the results shown in \cref{fig:GenSubSpecFrobErrs} we examined generating approximations of the test matrices with a target rank $p = 10$. The oversampling was fixed as $l = 10$. The approximation errors given in \cref{fig:GenSubSpecFrobErrs} are averages over $50$ calls of \cref{alg:gensubit}. As we would expect, based on \cref{thm:powit,thm:genpowit}, the relative errors decay exponentially with the number of matrix views. Approximation errors for the exponentially decaying test matrices are not shown in \cref{fig:GenSubSpecFrobErrs} because they have low approximation errors for any number of views.

The advantage of using the more general subspace \cref{alg:gensubit} over the standard subspace \cref{alg:subit} (same as \cref{alg:gensubit} for even views) is apparent from looking at \cref{fig:GenSubSpecFrobErrs}. That is, for some matrices an odd number of views can suffice to achieve a desired accuracy. In that case, the additional matrix view needed when using \cref{alg:subit} is excessive and a waste of computational effort.

\subsubsection{Application to normal matrices} \label{sec:resultsnormalmatrix}

In section \ref{sec:apply2normalmat} it was suggested that using ${\bm J}^*$ as input into \cref{alg:gensubit} may, for some applications, perform differently to using ${\bm J}$ as input. Here we consider using \cref{alg:gensubit} to form a low-rank approximation of ${\bm J}^* {\bm J}$. Here, the low-rank approximations are compared against $\llbracket {\bm J}^* {\bm J} \rrbracket_p$ and expected errors were estimated by running each method $500$ times. 

\begin{figure}[h!]
  \centering
\includegraphics[width=\textwidth]{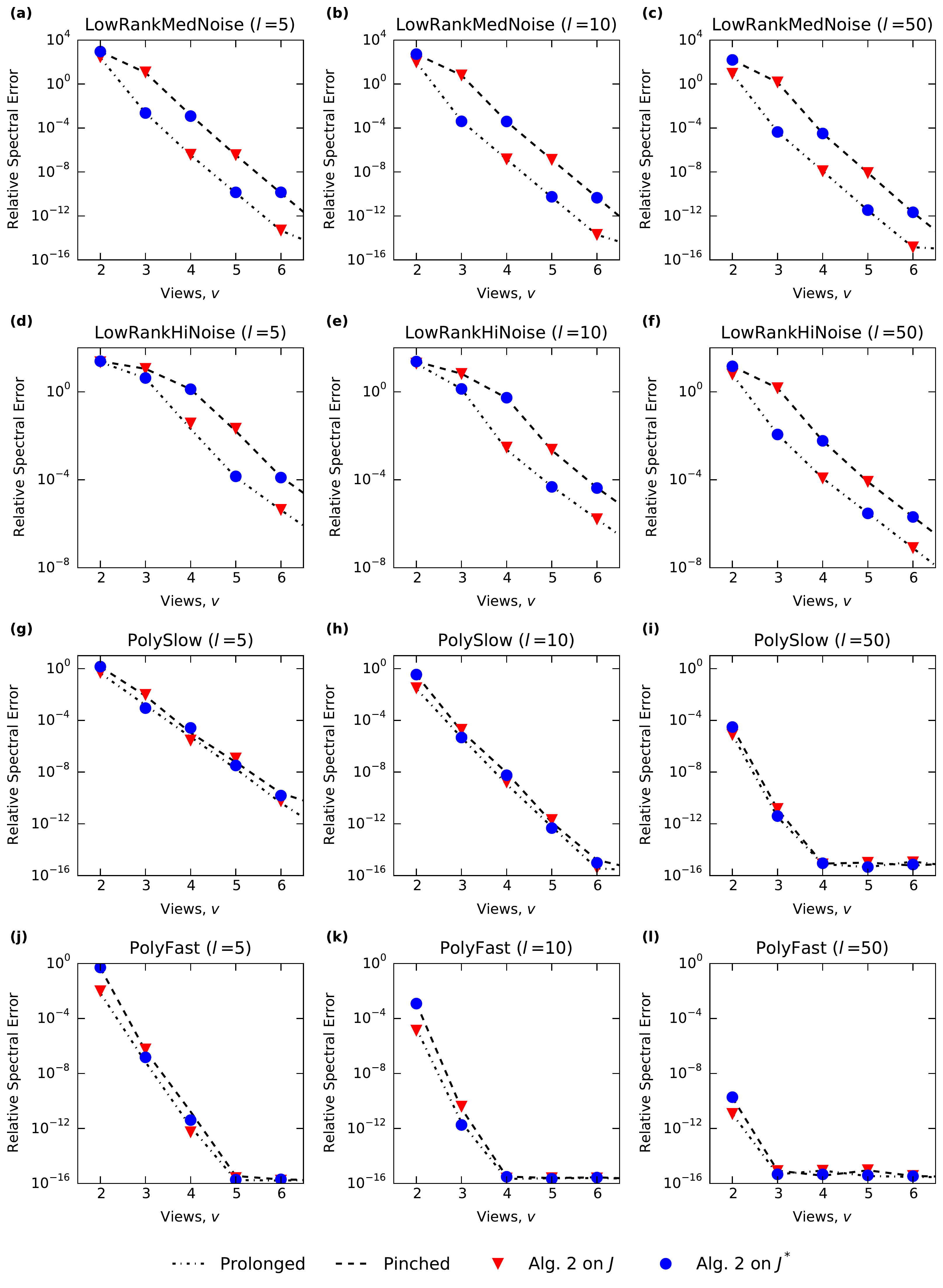}
  \caption{For selected test matrices ${\bm J}$, spectral norm approximation errors \eqref{eq:relspecerr} of the normal matrix ${\bm J}^* {\bm J}$ when using the Prolonged (Nystr\"{o}m) sketch \cref{alg:nystrom}, the Pinched sketch \cref{alg:pinched}, ${\bm J}$ as input into \cref{alg:gensubit}, or ${\bm J}^*$ as input into \cref{alg:gensubit}. For all the plots the target rank was $p=10$.}
  \label{fig:SpectralErrorNormalFig1}
\end{figure}

\cref{fig:SpectralErrorNormalFig1} showcases the difference between using ${\bm J}$ and ${\bm J}^*$ as input into \cref{alg:gensubit}, for the noisy and polynomial test matrices. As suggested by the discussion in section \ref{sec:apply2normalmat}, using ${\bm J}$ as input for even views and ${\bm J}^*$ as input for odd views gives the best accuracy. This is algebraically equivalent to using the Nystr\"{o}m type approach (\cref{alg:nystrom}). The alternative choices for \cref{alg:gensubit}, which equate to using the pinched method (\cref{alg:pinched}), perform worse. Notice that for some problems a poor choice of input into \cref{alg:gensubit} can result in little gain in accuracy over using the same input matrix and one less view. However, for some problems the difference can be less extreme or even negligible.

\cref{fig:SpectralErrorNormalFig2} tells a similar story when considering the Jacobian test matrix ${\bm S}_\text{D}$. For the results in \cref{fig:SpectralErrorNormalFig2}, we used a diagonal matrix containing the top $1,000$ singular values of ${\bm S}_\text{D}$ as a surrogate for ${\bm S}_\text{D}$. This was done to speed up the analysis but does not affect the overall results. The results show for ${\bm S}_\text{D}$ that the Nystr\"{o}m template has a noticeable advantage for a small number of views ($v < 4$), especially when considering the 2-view variants. This is good to know since four views or less is what we would hope to be sufficient.

\begin{figure}[htbp]
  \centering
\includegraphics[width=\textwidth]{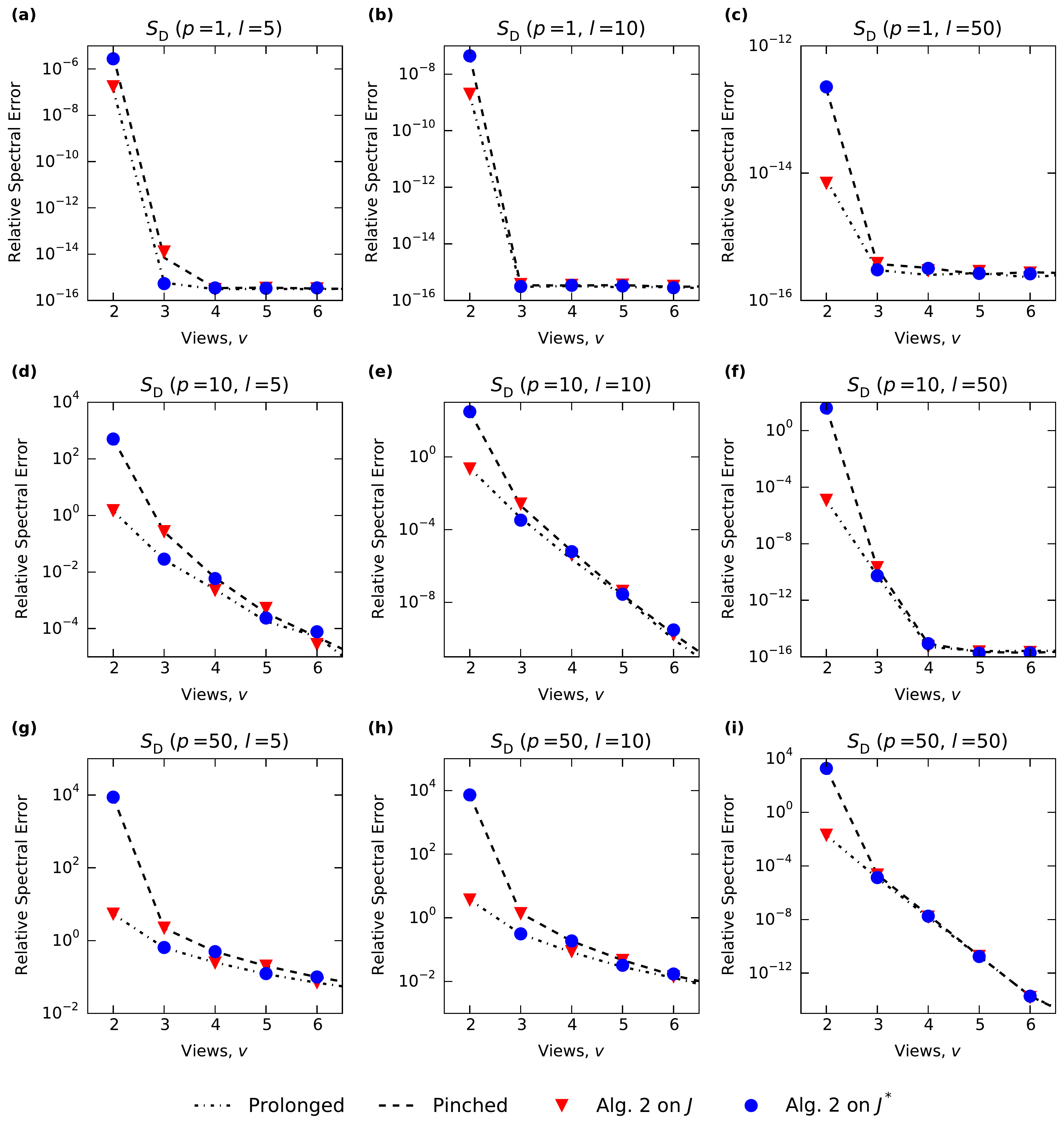}
  \caption{For the $6,135$ by $24,000$ Jacobian test matrix ${\bm J} = {\bm S}_\text{D}$, spectral norm approximation errors \eqref{eq:relspecerr} of the normal matrix ${\bm J}^* {\bm J}$ when using the Prolonged (Nystr\"{o}m) sketch \cref{alg:nystrom}, the Pinched sketch \cref{alg:pinched}, ${\bm J}$ as input into \cref{alg:gensubit}, or ${\bm J}^*$ as input into \cref{alg:gensubit}.}
  \label{fig:SpectralErrorNormalFig2}
\end{figure}

\subsubsection{Subspace iteration compared with the block Krylov approach}\label{sec:resultsBlockKrylov}

We also consider differences between applying the block Krylov \cref{alg:blkKrylov} and the simpler subspace iteration \cref{alg:gensubit}. For these tests we considered the Jacobian test matrix and the three matrices with the slowest decay, since the other test matrices do not require many views to achieve a high-quality factorization. \cref{fig:BlkKrylovError} compares the pass efficiency of \cref{alg:blkKrylov} and \cref{alg:gensubit} when using $l=10$ and averaging over $50$ runs. 

\begin{figure}[b!]
  \centering
\includegraphics[width=\textwidth]{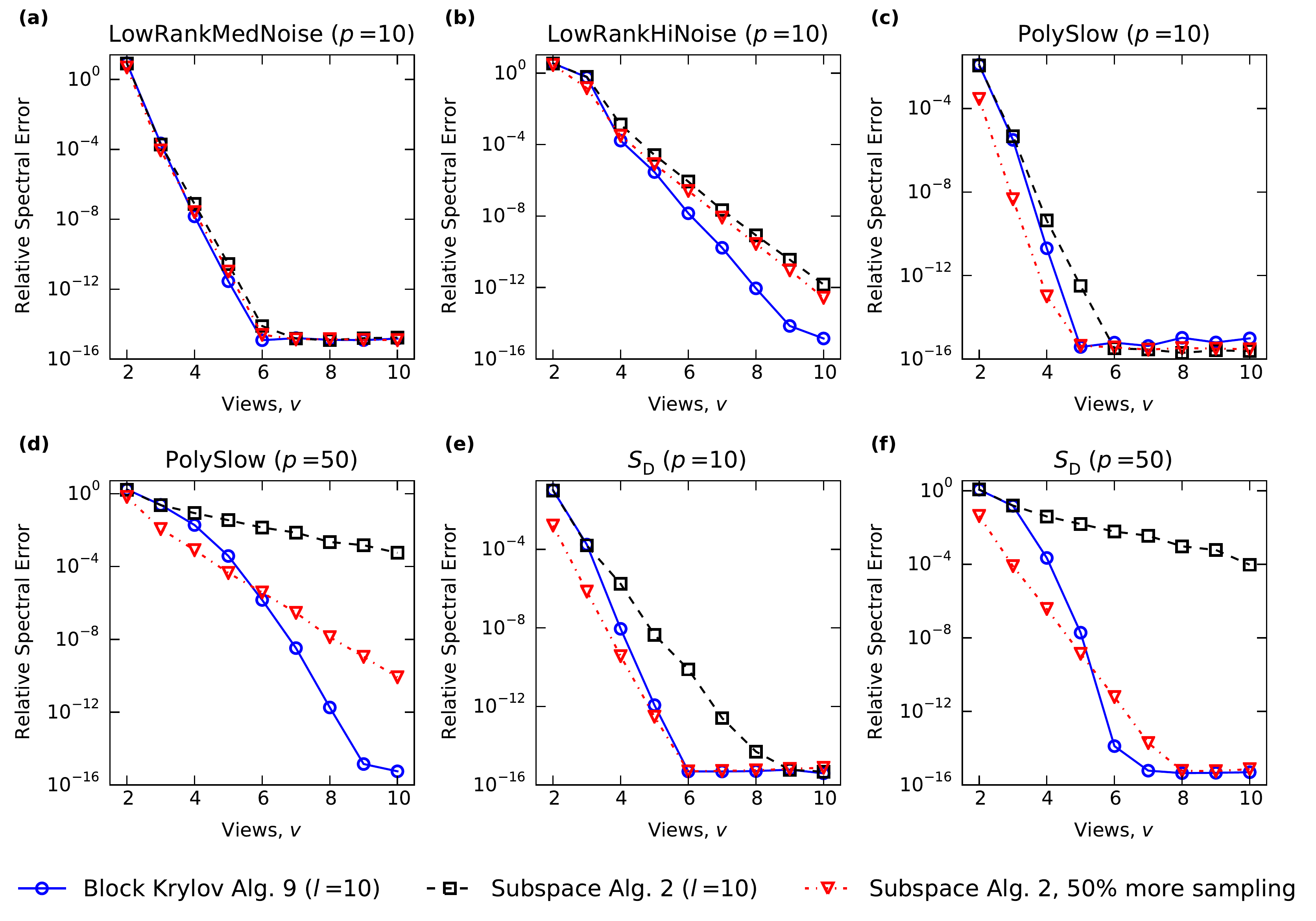}
  \caption{Improvement in matrix approximation errors when increasing the number of matrix passes or views, when using the generalized subspace iteration method (\cref{alg:gensubit}) and the generalized block Krylov algorithm (\cref{alg:blkKrylov}).}
  \label{fig:BlkKrylovError}
\end{figure}

\begin{figure}[htbp]
  \centering
\includegraphics[width=\textwidth]{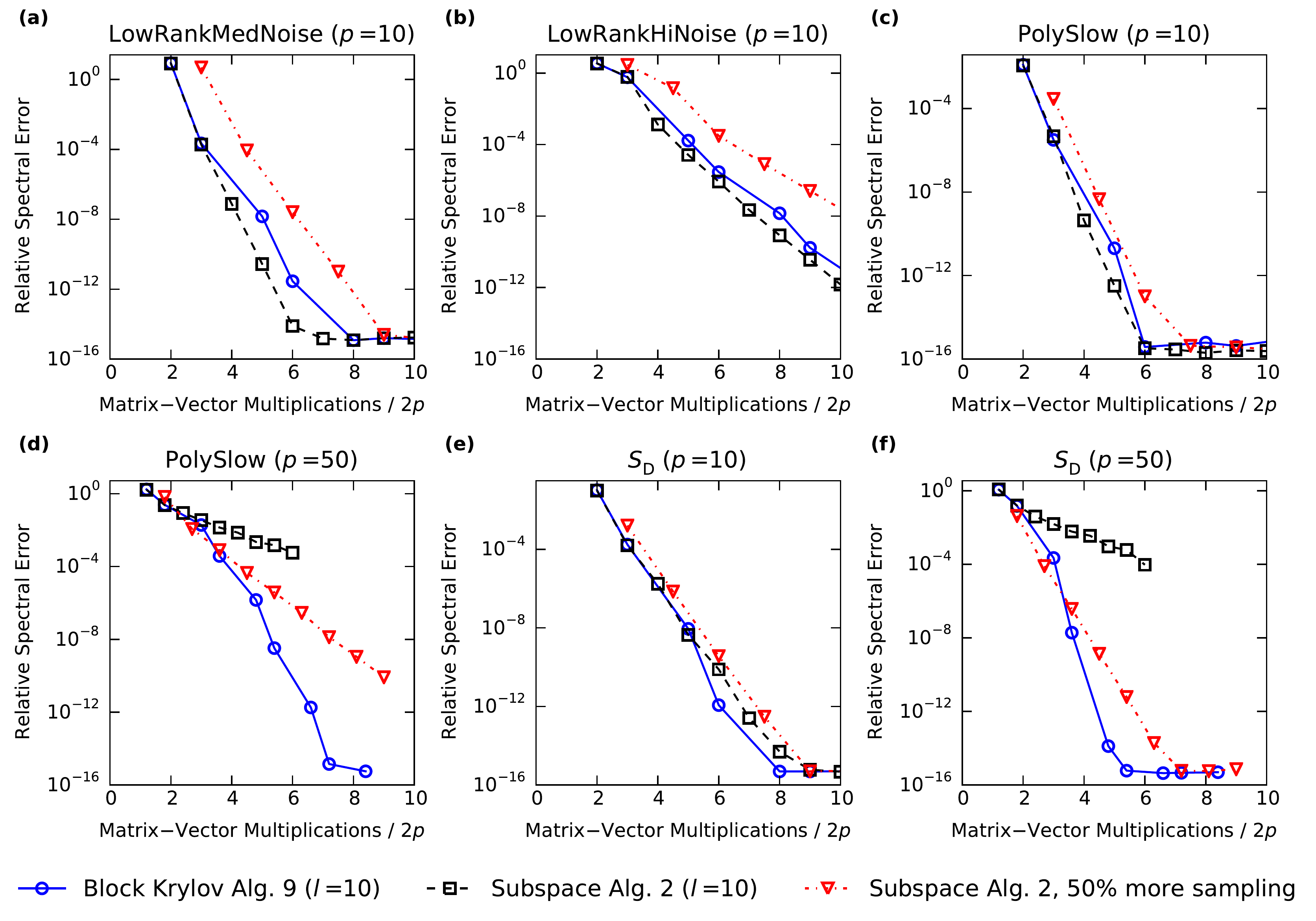}
  \caption{Improvement in matrix approximation errors with respect to the number of matrix-vector multiplications, when using the generalized subspace iteration method (\cref{alg:gensubit}) and the generalized block Krylov algorithm (\cref{alg:blkKrylov}).}
  \label{fig:BlkKrylovMatrixMultCost}
\end{figure}

The results show for fixed oversampling and views that the block Krylov method is more accurate. However, Figures \ref{fig:BlkKrylovError}c)-f) also show that the subspace iteration method can be made more accurate by oversampling more. For the matrices that have reasonably rapid spectral decay, additional oversampling can make the pass efficiency of \cref{alg:gensubit} comparable to \cref{alg:blkKrylov}. Figures \ref{fig:BlkKrylovError} and \ref{fig:BlkKrylovMatrixMultCost} show for reasonably decaying matrices that \cref{alg:gensubit} can achieve accuracy akin to \cref{alg:blkKrylov} using a similar number of matrix-vector multiplications and views. This is important to know since the main computational expense for the Jacobian matrices motivating this study is from the number of matrix-vector multiplications, which are found using adjoint and direct simulations.

Considering these results and the fact that \cref{alg:blkKrylov} has higher computational cost associated with the large Krylov space, \cref{alg:gensubit} may be advantageous for some problems. Furthermore, for each view \cref{alg:gensubit} involves multiplying the input matrix with a matrix having a fixed number of columns. This may be more suitable when dealing with adjoint and direct simulations as the sample size $p+l$ could be tailored to the available computational hardware. \cref{alg:blkKrylov}, on the other hand, would involve a variable number of adjoint and direct solves, which could make it harder to optimize. However, for data matrices with trailing singular values that decay slowly, the block Krylov approach can be better, see \cref{fig:BlkKrylovError,fig:BlkKrylovMatrixMultCost}, and \cite{musco2015krylov}. Furthermore, \cref{fig:BlkKrylovError,fig:BlkKrylovMatrixMultCost} show that increased sampling may not be worth the cost when the decay is slow.

\subsection{Comparison of 1-view streaming methods} \label{sec:1viewresults}

Section \ref{sec:1view} gives templates for 1-view variants that potentially improve upon the state-of-the-art 1-view methods outlined in \cite{tropp2017}. Here we are mainly interested in studying \cref{alg:1view} when using $l_1 \simeq l_2$. By $l_1 \simeq l_2$ we mean $l_1 =  \lfloor T/2 \rfloor - p$ (which gives $l_1 = l_2$ or $l_1 = l_2 - 1$). Section \ref{sec:testlcut} compares different schemes for the modification parameter $l_\text{c}$ when using \cref{alg:1view} with $l_1 \simeq l_2$. Section \ref{sec:testvsbaseline} compares selected schemes from section \ref{sec:testlcut} with the baseline 1-view scheme recommended by \cite{tropp2017}. Finally, section \ref{sec:optimal1viewtests} considers the improved extended scheme introduced in section \ref{sec:1viewextended} and how well practical oversampling schemes can perform compared with the best possible. The relative errors presented in this section are averages over $50$ trials.

\subsubsection{Choosing $l_\text{c}$}\label{sec:testlcut}

\cref{fig:FrobErrLcut} compares, for $l_1 \simeq l_2$, schemes where $l_\text{c}$ is chosen a priori as a fixed ratio of $l_1$ \eqref{eq:lcutfixedratio} and the minimum variance approach \eqref{eq:MinVar}. The minimum variance approach uses post-processing to choose $l_\text{c}$, every time a low-rank approximation is generated. Therefore, $l_\text{c}$ can vary between runs of the minimum variance approach though other input parameters stay fixed.

To increase accuracy $l_\text{c}$ should take on larger values when the singular spectrum decays fast. But, notice from \cref{fig:FrobErrLcut} how choosing a large value, especially $l_\text{c} = l_1$, is a bad idea unless the spectral decay is rapid and $l_1$ is large enough. These results for the relationship between $l_\text{c}$ and $l_1$ are analogous to the results for the relationship between $l_1$ and $l_2$ when using the baseline 1-view method \cite{tropp2017}. From \cref{fig:FrobErrLcut} we draw the conclusion that the simple choice $l_\text{c} = \lfloor l_1/2 \rfloor$ works quite well overall. However, like the baseline oversampling schemes proposed by \cite{tropp2017} and discussed in section \ref{sec:baselineoversampling}, the fixed $l_\text{c}$ schemes require some prior ideas about the expected spectral decay to get peak performance. The adaptive minimum variance approach, on the other hand, performs close to the best for most of the test problems and oversampling budgets.

\begin{figure}[b!]
  \centering
\includegraphics[width=0.95\textwidth]{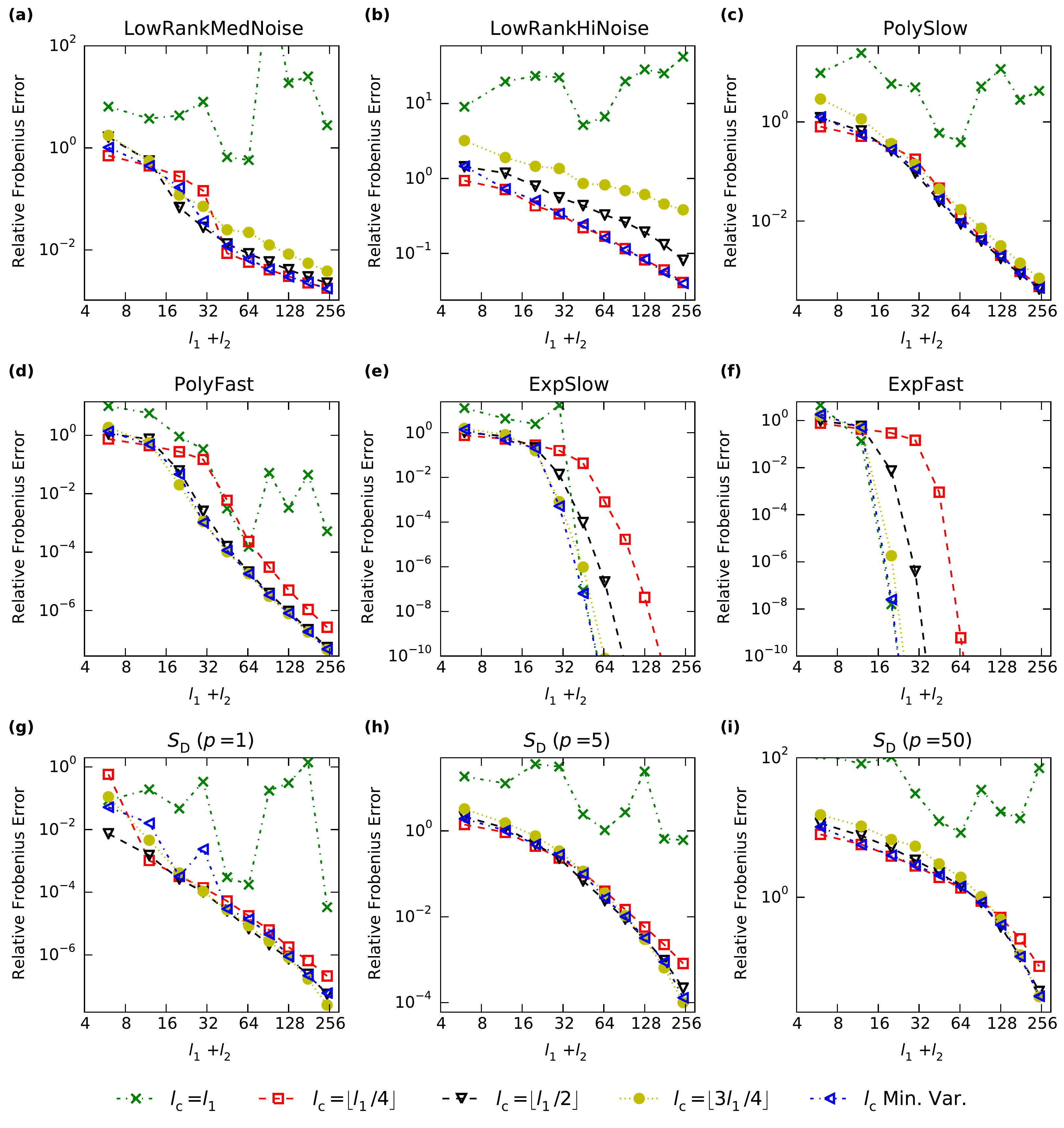}
  \caption{Relative Frobenius errors when using \cref{alg:1view} with $l_1 \simeq l_2$ ($l_1 = \lfloor (l_1 + l_2)/2 \rfloor = \lfloor T/2 \rfloor - p$). The rank of the approximations is $p=5$, unless specified otherwise. The plots compare the performance of choosing a fixed $l_\text{c}$ using \eqref{eq:lcutfixedratio} with choosing $l_\text{c}$ adaptively according to the minimum variance (Min. Var.) approach \eqref{eq:MinVar}.}
  \label{fig:FrobErrLcut}
\end{figure}

\subsubsection{Baseline method compared with $l_1 \simeq l_2$ variants} \label{sec:testvsbaseline}

\cref{fig:FrobErrTroppVsLcut} compares the baseline 1-view approach with \cref{alg:1view} using $l_1 \simeq l_2$, with $l_\text{c}$ chosen using the minimum variance approach or $l_\text{c} = \lfloor l_1/2 \rfloor$. For the baseline method we chose to run three variants, one for each oversampling scheme proposed by \cite{tropp2017}. These baseline oversampling schemes are designed for matrices with flat (\texttt{FLAT}: \eqref{eq:overflat}), moderately decaying (\texttt{DECAY}: \eqref{eq:overmed}) and rapidly decaying (\texttt{RAPID}: \eqref{eq:overrapid}) singular spectra. \cref{fig:FrobErrTroppVsLcut} shows that \cref{alg:1view} with $l_1 \simeq l_2$ performs similarly to the baseline 1-view approaches. The scheme using $l_\text{c} = \lfloor l_1/2 \rfloor$ works similarly to the most widely applicable baseline scheme (\texttt{DECAY}: \eqref{eq:overmed}). Again, the minimum variance approach worked the best overall.

\begin{figure}[t!]
  \centering
\includegraphics[width=0.95\textwidth]{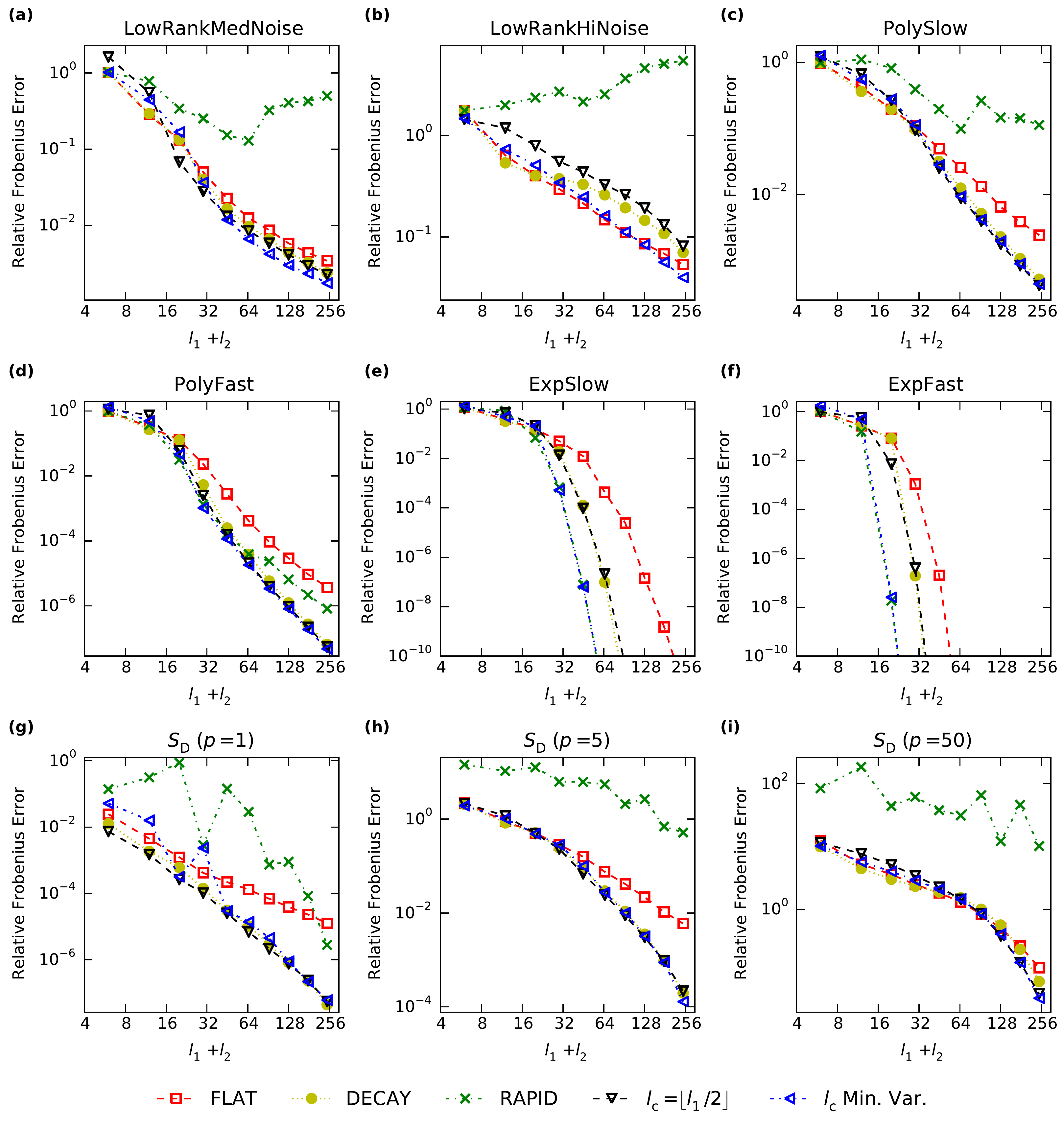}
  \caption{Relative Frobenius errors when using \cref{alg:1view} compared against the baseline 1-view algorithm (see section \ref{sec:1viewbaselinealg}). The rank of the approximations is $p=5$, unless specified otherwise. For \cref{alg:1view} we consider $l_1 \simeq l_2$ with $l_\text{c}$ chosen as $\lfloor l_1/2 \rfloor$ or chosen adaptively by the minimum variance (Min. Var.) approach \eqref{eq:MinVar}. For the baseline method we consider oversampling schemes for flat (\texttt{FLAT}: \eqref{eq:overflat}), moderately decaying (\texttt{DECAY}: \eqref{eq:overmed}), and rapidly decaying (\texttt{RAPID}: \eqref{eq:overrapid}) singular spectra.}
  \label{fig:FrobErrTroppVsLcut}
\end{figure}

\begin{figure}[t!]
  \centering
\includegraphics[width=0.95\textwidth]{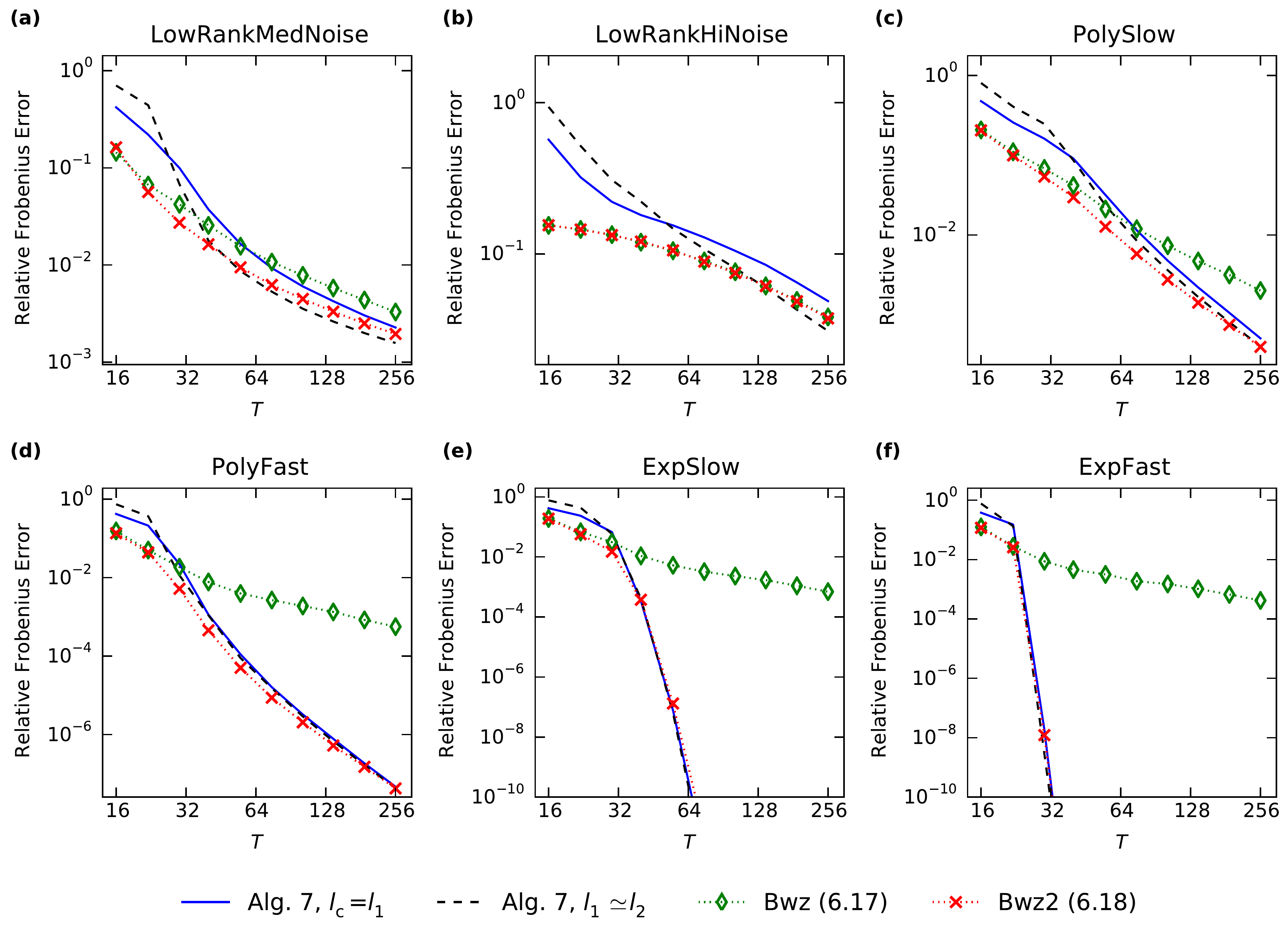}
  \caption{Best performance of \cref{alg:1view} (with $l_1 \simeq l_2$), the baseline 1-view algorithm (see section \ref{sec:1viewbaselinealg}), an inaccurate extended 1-view method \eqref{eq:bwz}, and an improved extended 1-view method \eqref{eq:bwzimproved}. The methods are compared in terms of their performance with respect to the memory budget $T$. All tests used a fixed rank $p=5$ for the approximations.}
  \label{fig:Oracle5}
\end{figure}

\subsubsection{Optimal performance and the extended scheme} \label{sec:optimal1viewtests}

To compare further the 1-view methods, outlined in section \ref{sec:1view}, we look at their optimal performance using a comparison similar to that used in \cite{tropp2017}. More specifically, we consider the baseline 1-view method, \cref{alg:1view} with $l_1 \simeq l_2$ and the two extended sketching variants. Here we try to estimate the \textit{best performance} a method can be expected to achieve for a given sampling budget $T$. Here the best performance of a method was found by running each method $50$ times for feasible oversampling parameters given the storage budget $T$ and choosing the fixed oversampling scheme that gave the minimum average Frobenius error. This differs slightly from the \textit{oracle performance} measure used by \cite{tropp2017}, where they ran each method for feasible oversampling parameters and found the minimum error. They repeated this and took the average minimum error over $20$ trials. The oracle performance measure gives lower errors than the best performance measure. In our opinion, the best performance measure better reflects the peak performance that we could hope to achieve. It should be noted that \cite{tropp2017} did not claim that oracle performance is achievable in practice but used it as a tool for comparing the possible upside of various methods.

\cref{fig:Oracle5} compares the best performance for the baseline method, \cref{alg:1view} with $l_1 \simeq l_2$, and the extended sketch methods. As claimed in section \ref{sec:1viewextended} the proposed modification to the extended sketch \cref{eq:bwzimproved} performs substantially better than using \cref{eq:bwz}. Using \cref{eq:bwzimproved} also retains the good performance characteristics of using \cref{eq:bwz} for small oversampling and flat spectra. Overall for a fixed memory budget $T$ the improved extended sketching method \cref{eq:bwzimproved} performs the best out of the presented 1-view methods. The baseline 1-view method and \cref{alg:1view} ($l_1 \simeq l_2$) have similar peak performance characteristics. But how do practical implementations compare to the best performance?

\cref{fig:OracleVsPractical5} shows approximation errors that are achievable in practice using the minimum variance approach along with the best performance for \cref{alg:1view} with $l_1 \simeq l_2$. The minimum variance scheme does well and gives approximations comparable to the best performance. \cref{fig:OracleVsPractical5} also depicts results of using \cref{eq:bwzimproved} with oversampling parameters chosen according to \cref{eq:bwzafac8}. This scheme, arrived at by simple trial and error, results in approximation errors that are close to the best performance. The oversampling scheme \cref{eq:bwzafac8} and the minimum variance method worked well for the matrices we tested, but this may not necessarily be the case in general. However, the presented 1-view methods and oversampling schemes are promising.

\begin{figure}[t!]
  \centering
\includegraphics[width=0.95\textwidth]{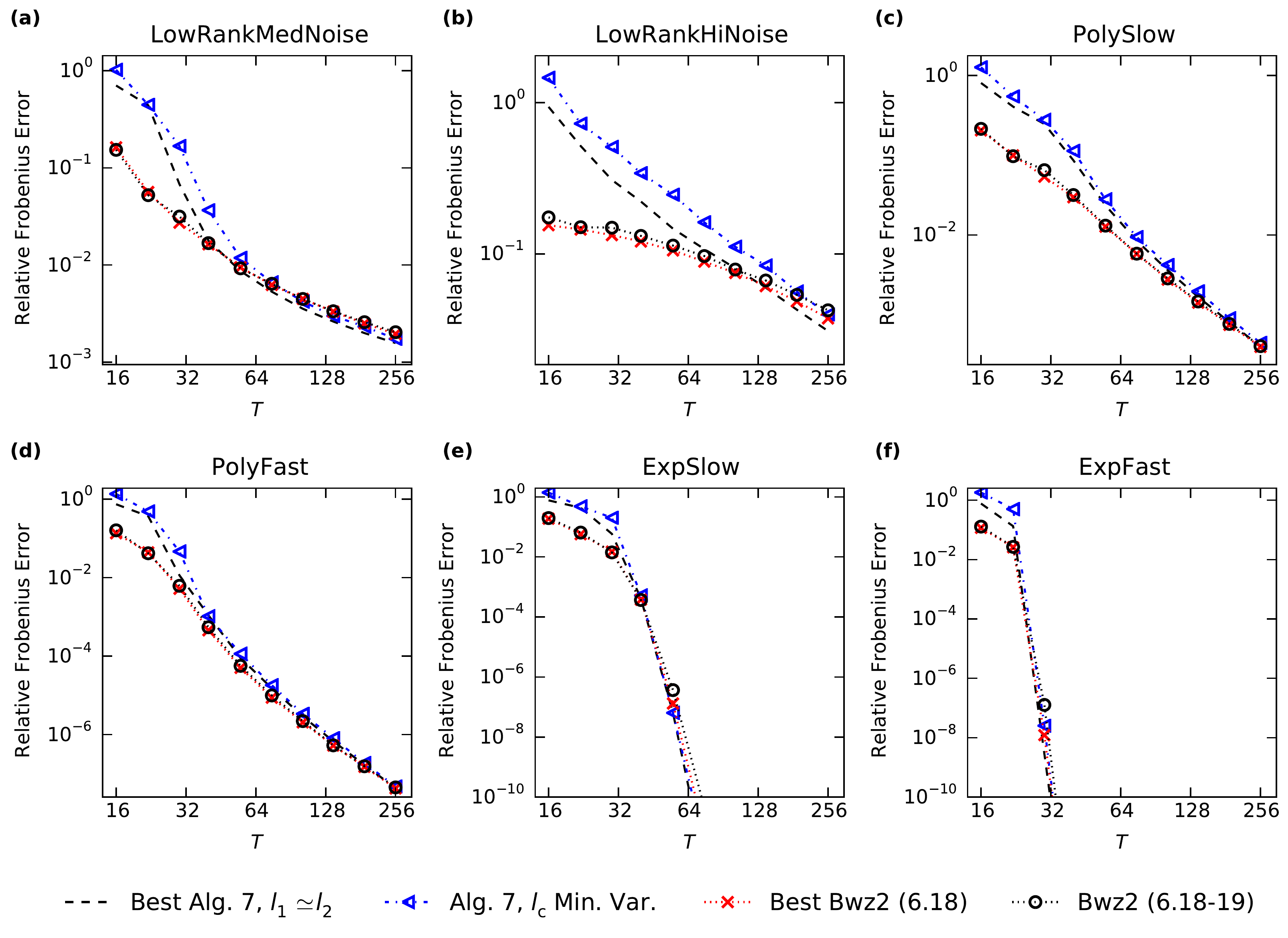}
  \caption{Best performance of \cref{alg:1view} (with $l_1 \simeq l_2$) and an improved extended 1-view method \eqref{eq:bwzimproved} compared with practical versions of both methods. The methods are compared in terms of their performance with respect to the memory budget $T$. All tests used a fixed rank $p=5$ for the approximations. \texttt{Min. Var.} refers to using \cref{alg:1view} with \eqref{eq:MinVar}.}
  \label{fig:OracleVsPractical5}
\end{figure}

\section{Conclusions}
\label{sec:conclusions}

We have presented practical randomized algorithms which are variations of popular randomized algorithms for a rank-$p$ approximation of a rectangular matrix. The study was aimed at practical algorithms that can be applied to a range of problems which may have an arbitrary budget for the number of times the matrix is accessed or viewed. To this end, we considered a more general randomized subspace iteration algorithm that generates a rank-$p$ factorization using any number of views $v \geq 2$ as a substitute for a widely used subspace algorithm which only uses an even number of views. We also extended this approach to a block Krylov framework, which can be advantageous for matrices that have a heavy tail in their singular spectrum.

For applications that can only afford viewing the data matrix once, we presented improved randomized 1-view methods. Adapting a 1-view algorithm recently recommended by \cite{tropp2017}, we arrived at a simple algorithm and sampling scheme that can outperform the algorithm recommended by \cite{tropp2017} for a range of matrices. The 1-view approach proposed here achieves this by using an adaptive post-processing step which analyses in a simple way the information contained in the randomized sketches created by the 1-view scheme. We also improve an extended 1-view method which was tested by \cite{tropp2017}. A simple adjustment to the extended 1-view method makes it considerably more accurate and it can outperform the previously mentioned 1-view method when the goal is to minimize the memory required by the randomized matrix sketches. Furthermore, based on 1-view and power iteration methods proposed by \cite{yu2017single,yu2018singlepower}, we proposed a highly pass-efficient randomized block Krylov algorithm that is suited to approximating large matrices stored out-of-core in either row-major or column-major format.

We also discussed how randomized algorithms can be applied to the type of nonlinear and large-scale inverse problems, motivating this study. In particular, we discussed various randomized strategies for approximating Levenberg-Marquardt model updates at a low computational cost. In relation to the motivating problem we discussed subtle differences, accuracy wise, between applying the randomized subspace or block-Krylov algorithms to a matrix of interest or its transpose. The subtle differences may be important when estimating normal matrices such as those appearing in inverse problems and are important in applications that involve estimating either the left-singular or right-singular subspace of a matrix. Computational experiments supported the claims made on the properties of the presented randomized algorithms.

\appendix
\section{Proof of \cref{thm:genpowit}}\label{sec:proof}
To prove \cref{thm:genpowit} we use the following theorem, which is analogous to \cite[Thm. 9.2]{halko2011structure}.

\begin{theorem}[Half power scheme]\label{thm:halfpowaid}
Let ${\bm A} \in {\mathbb{R}}^{n_r \times n_c}$ and let ${\bm \Omega}_r \in {\mathbb{R}}^{n_c \times s}$. For an integer $q \geq 1$ set ${\bm Y}_r = ({\bm A}^* {\bm A})^q {\bm \Omega}_r$. Let ${\bm Q}_r \in {\mathbb{R}}^{n_c \times s}$ be an orthonormal matrix which forms a basis for the range of ${\bm Y}_r$. Then
\begin{displaymath}
\norm{ {\bm A} ({\bm I} - {\bm Q}_r {\bm Q}_r^*) }^{2q}
\leq
\norm{ ({\bm A}^* {\bm A})^q ({\bm I} - {\bm Q}_r {\bm Q}_r^*) } \, .
\end{displaymath}
\end{theorem}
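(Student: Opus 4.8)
The plan is to reduce the statement to a clean inequality about an orthogonal projector, since the only property of ${\bm Q}_r$ the bound actually uses is that ${\bm P} := {\bm I} - {\bm Q}_r {\bm Q}_r^*$ is an orthogonal projector; the way ${\bm Q}_r$ is built from ${\bm Y}_r$ plays no role in the inequality itself. First I would write ${\bm B} := {\bm A}^* {\bm A}$, which is Hermitian positive semidefinite, and observe that $\norm{{\bm A}{\bm P}}^2 = \norm{{\bm P}{\bm A}^*{\bm A}{\bm P}} = \norm{{\bm P}{\bm B}{\bm P}}$, using $\norm{{\bm M}}^2 = \norm{{\bm M}^*{\bm M}}$ together with ${\bm P}^* = {\bm P} = {\bm P}^2$. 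Raising to the $q$th power, the left-hand side of the theorem is $\norm{{\bm P}{\bm B}{\bm P}}^q$, so it remains to prove $\norm{{\bm P}{\bm B}{\bm P}}^q \leq \norm{{\bm B}^q{\bm P}}$.

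The core step, which parallels the convexity estimate underlying \cite[Thm.~9.2]{halko2011structure}, is to show $\norm{{\bm P}{\bm B}{\bm P}}^q \leq \norm{{\bm P}{\bm B}^q{\bm P}}$ for psd ${\bm B}$ and any orthogonal projector ${\bm P}$. I would argue that $\norm{{\bm P}{\bm B}{\bm P}} = \max\, {\bm x}^*{\bm B}{\bm x}$ and $\norm{{\bm P}{\bm B}^q{\bm P}} = \max\, {\bm x}^*{\bm B}^q{\bm x}$, the maxima ranging over unit vectors ${\bm x}$ with ${\bm P}{\bm x} = {\bm x}$; this follows because both matrices are psd, so their spectral norms are attained as Rayleigh quotients, and the substitution ${\bm z} = {\bm P}{\bm y}$ lets the projectors be absorbed into the constraint on ${\bm x}$. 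Fixing the maximizer ${\bm x}$ of the first quantity and expanding in the eigenbasis ${\bm w}_i$ of ${\bm B}$ with eigenvalues $\mu_i \geq 0$, I write ${\bm x}^*{\bm B}{\bm x} = \sum_i \mu_i |{\bm w}_i^*{\bm x}|^2$ and ${\bm x}^*{\bm B}^q{\bm x} = \sum_i \mu_i^q |{\bm w}_i^*{\bm x}|^2$, where the weights $|{\bm w}_i^*{\bm x}|^2$ sum to $1$. Since $t \mapsto t^q$ is convex, Jensen's inequality gives $({\bm x}^*{\bm B}{\bm x})^q \leq {\bm x}^*{\bm B}^q{\bm x}$, and taking a further maximum over admissible ${\bm x}$ on the right yields the claim.

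The remaining step is to discharge the outer projector by submultiplicativity: $\norm{{\bm P}{\bm B}^q{\bm P}} \leq \norm{{\bm P}}\,\norm{{\bm B}^q{\bm P}} \leq \norm{{\bm B}^q{\bm P}}$, since an orthogonal projector has spectral norm at most one. Chaining the three estimates gives $\norm{{\bm A}{\bm P}}^{2q} = \norm{{\bm P}{\bm B}{\bm P}}^q \leq \norm{{\bm P}{\bm B}^q{\bm P}} \leq \norm{{\bm B}^q{\bm P}} = \norm{({\bm A}^*{\bm A})^q({\bm I} - {\bm Q}_r{\bm Q}_r^*)}$, which is the asserted bound. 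I expect the main obstacle to be the convexity step: one must be careful in reducing the spectral norm of ${\bm P}{\bm B}{\bm P}$ to a Rayleigh quotient restricted to the range of ${\bm P}$ (so that both projectors are genuinely absorbed into the constraint) and in confirming that $({\bm x}^*{\bm B}{\bm x})^q \leq {\bm x}^*{\bm B}^q{\bm x}$ for every unit ${\bm x}$. The reduction of the left-hand side and the final submultiplicativity bound are routine norm manipulations by comparison.
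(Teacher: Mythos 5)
Your proof is correct and follows essentially the same route as the paper's: both arguments rest on the inequality $\norm{{\bm P} {\bm B} {\bm P}}^t \leq \norm{{\bm P} {\bm B}^t {\bm P}}$ for an orthogonal projector ${\bm P}$ and positive-semidefinite ${\bm B} = {\bm A}^* {\bm A}$, which the paper imports as Prop.~8.6 of Halko et al.\ and which you re-derive inline via Rayleigh quotients and Jensen's inequality. The only cosmetic divergence is the closing step — you apply the lemma with exponent $q$ and discard the outer projector by submultiplicativity, whereas the paper applies it with exponent $2q$ and uses the exact identity $\norm{{\bm P} ({\bm A}^* {\bm A})^{2q} {\bm P}} = \norm{({\bm A}^* {\bm A})^{q} {\bm P}}^2$ — and both yield the identical bound.
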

\begin{proof}
Given an orthogonal projector ${\bm R}$, a nonnegative diagonal matrix ${\bm D}$ and $t \geq 1$ we use the claim that \cite[Prop. 8.6]{halko2011structure}
\begin{equation}\label{eq:halkoclaim}
\norm{{\bm R} {\bm D} {\bm R}}^t 
\leq
\norm{{\bm R} {\bm D}^t {\bm R}} \, .
\end{equation}
Now, with the SVD ${\bm A} = {\bm U} {\bm \Lambda} {\bm V}^*$ and an orthogonal projector ${\bm P} \in {\mathbb{R}}^{n_c \times n_c}$ we get
\begin{align*}
\norm{{\bm A} {\bm P}}^{4q} &=
\norm{{\bm P} {\bm A}^* {\bm A} {\bm P}}^{2q} =
\norm{({\bm V}^* {\bm P} {\bm V}) {\bm \Lambda}^2 ({\bm V}^* {\bm P} {\bm V})}^{2q} \\
& \leq 
\norm{({\bm V}^* {\bm P} {\bm V}) {\bm \Lambda}^{4q} ({\bm V}^* {\bm P} {\bm V})} = 
\norm{{\bm P} ({\bm A}^* {\bm A})^{2q} {\bm P}} \\ 
&= 
\norm{{\bm P} ({\bm A}^* {\bm A})^{q} ({\bm A}^* {\bm A})^{q} {\bm P}} =  
\norm{({\bm A}^* {\bm A})^{q} {\bm P}}^2 \, .
\end{align*}
The second and fourth relations apply because of the unitary invariance of the spectral norm. The third relation applies because ${\bm V}^* {\bm P} {\bm V}$ is an orthogonal projector and \cref{eq:halkoclaim} therefore applies. Finally, replace ${\bm P}$ with the orthogonal projector ${\bm I} - {\bm Q}_r {\bm Q}_r^*$ and take the square root of the first and last spectral norms.
\end{proof}

With \cref{thm:halfpowaid} in hand we prove \cref{thm:genpowit} in the same fashion as \cite{halko2011structure} proved \cref{thm:powit}.

\begin{proof}[Proof of \cref{thm:genpowit}]
Define ${\bm B} = ({\bm A}^* {\bm A})^q$.
Then, by H\"older's inequality and \cref{thm:halfpowaid},
\begin{displaymath}
\mathbb{E} \left[ \norm{ {\bm A} - {\bm A} {\bm Q}_r {\bm Q}_r^* } \right] 
\leq
\left( \mathbb{E} \left[ \norm{ {\bm A} - {\bm A} {\bm Q}_r {\bm Q}_r^* }^{2q}  \right] \right)^{1/(2q)} 
\leq
\left( \mathbb{E} \left[ \norm{ {\bm B} - {\bm B} {\bm Q}_r {\bm Q}_r^* } \right]  \right)^{1/(2q)} 
.
\end{displaymath}
Now,
\begin{align*}
 \mathbb{E} \left[ \norm{ {\bm B} - {\bm B} {\bm Q}_r {\bm Q}_r^* } \right] 
=
\mathbb{E} \left[ \norm{ {\bm B}^* - {\bm Q}_r {\bm Q}_r^* {\bm B}^* } \right]
 \leq
\left( 1 + \sqrt{\frac{p}{l-1}} \right) \lambda_{p+1}^{2q} +
\frac{\mathrm{e} \sqrt{p+l}}{l} 
\left( \sum_{j>p} \lambda_j^{4q} \right)^{1/2} \, .
\end{align*}
The above uses that ${\bm B}$ has singular values $\left\{ \lambda_j^{2q} \right\}_{j=1}^{\text{min}(n_r,n_c)}$ and the inequality follows from \cite[Thm. 10.6]{halko2011structure}, which is a special case of \cref{thm:powit} with $q=0$.
\end{proof}

\section{Properties of truncated Levenberg-Marquardt updates} \label{sec:appendixTSVDLMupdates}
Consider the Levenberg-Marquardt update equation \eqref{eq:LMupdate} and that we have a TSVD approximation of the Jacobian ${\bm J}\approx \hat{\bm J}$. We assume that the rank of the approximation is $s$, that is ${\bm J}\approx \hat{\bm J} = {\bm U}_{s} {\bm \Lambda}_{s} {\bm V}_{s}^*$. This approximation could be formed using a randomized algorithm, such as \cref{alg:gensubit}, but could also be the exact TSVD. For approximately solving \eqref{eq:LMupdate} we consider using the rank-$p$ approximation $\llbracket \hat{\bm J} \rrbracket_p = {\bm U}_{p} {\bm \Lambda}_{p} {\bm V}_{p}^*$, where $p \leq s$, and one of the following approximate LM updates (see also \cref{eq:LMupdatedx1,eq:LMupdatedx2,eq:LMupdatedx3}):
\begin{align}
\delta {\bm x}_1 =
 \sum_{i=1}^p \alpha_i {\bm v}_i
; \quad
\delta {\bm x}_2 = 
 \sum_{i=1}^p \beta_i {\bm v}_i
; \quad
\delta {\bm x}_3 = 
 - \frac{1}{\mu + \gamma} \left[ ({\bm g}_\text{obs} + \mu {\bm x}) + \sum_{i=1}^p \lambda_i^2 \beta_i {\bm v}_i \right]
 ,
\end{align}
where
\begin{align}
\alpha_i = - \frac{1}{\mu + \gamma + \lambda_i^2}  \left[  {\lambda}_i {\bm u}_i^*   {\bm d} +  \mu {\bm v}_i^* {\bm x} \right]
; \quad
\beta_i = - \frac{1}{\mu + \gamma + \lambda_i^2}  \left[  {\bm v}_i^*   {\bm g}_\text{obs} +  \mu {\bm v}_i^* {\bm x} \right]
.
\end{align}

\subsection{Length of parameter update}

For the first two update methods we have
\begin{align}
\norm{\delta {\bm x}_1}^2 = \sum_{i=1}^p \alpha_i^2 
; \quad
\norm{\delta {\bm x}_2}^2 = \sum_{i=1}^p \beta_i^2
.
\end{align}
Therefore, $\norm{\delta {\bm x}_1}$ and $\norm{\delta {\bm x}_1}$ increase monotonically with the truncation $p$, for a given TSVD approximation $\hat{\bm J}$. This is a good property as it means that $p$ can be used to regularize the model update, both in terms of step length as well as filtering out components that would contribute high-frequency components. However, this is not the case for the third updating scheme, as shown below.

For the following discussion we use the matrix ${\bm V} = [{\bm V}_{s} \,\, | \,\, {\bm V}_{\bot}] = [ {\bm v}_1 \,  {\bm v}_2 \, \dots \, {\bm v}_{N_m} ]$, where ${\bm V}_{\bot}$ is a complementary basis orthogonal to ${\bm V}_{s}$ such that ${\bm V}$ defines a complete orthonormal basis for ${\mathbb{R}}^{N_m}$. In that case,
\begin{align}
\delta {\bm x}_3 
= 
\sum_{i=1}^p \beta_i {\bm v}_i 
+ \sum_{i=p+1}^{N_m} \tilde{\beta}_i {\bm v}_i  
,
\end{align}
where
\begin{equation}
\tilde{\beta}_i = - \frac{1}{\mu + \gamma} ( {\bm v}_i^*   {\bm g}_\text{obs} + \mu {\bm v}_i^*   {\bm x}) =  \frac{\mu + \gamma + \lambda_i^2}{\mu + \gamma} \beta_i
,
\end{equation}
that is, $\tilde{\beta}_i = \kappa_i \beta_i$ where $\kappa_i \geq 1$. Accordingly, for $1 \leq p < s$,
\begin{align}
\norm{[\delta {\bm x}_3]_p}^2 
= 
\sum_{i=1}^p \beta_i^2 + \sum_{i=p+1}^{N_m} \tilde{\beta}_i^2
\geq 
\sum_{i=1}^{p+1} \beta_i^2 + \sum_{i=p+2}^{N_m} \tilde{\beta}_i^2
= 
\norm{[\delta {\bm x}_3]_{p+1}}^2
,
\end{align}
since $\tilde{\beta}_i^2 \geq \beta_i^2$. Decreasing $p$, therefore, increases the length of the model update $\delta {\bm x}_3$ and truncating $p$ is not effective as a regularization tool. Furthermore,
\begin{align}
\norm{\delta {\bm x}_3}^2 
= 
\sum_{i=1}^p \beta_i^2 + \sum_{i=p+1}^{N_m} \tilde{\beta}_i^2
= 
\norm{\delta {\bm x}_2}^2  + \sum_{i=p+1}^{N_m} \tilde{\beta}_i^2
\geq \norm{\delta {\bm x}_2}^2 
.
\end{align}
Therefore, when using $\delta {\bm x}_3$, $\gamma$ and $\mu$ become more important for regulating the problem and may need to be chosen larger. A larger $\gamma$ may be needed to make $\norm{\delta {\bm x}_3}$ comparable to $\norm{\delta {\bm x}_2}$. For an exact TSVD or when using a randomized approximation such that $\hat{\bm J} = {\bm J} {\bm Q}_r {\bm Q}_r^*$ then $\delta {\bm x}_1 = \delta {\bm x}_2$ \eqref{eq:dx1dx2same} and we get \cite[Lemma 5.8]{voronin2015lstsq}.

\subsection{Comparison with full update}

We also look at comparing the TSVD updates with the full LM update. Given the exact SVD of ${\bm J}$ whose full rank is $r$, the full LM solution \eqref{eq:LMupdate} can be written as
\begin{align}
	 \delta \bar{\bm x}  =  
     - \left[ {\bm J}^* {\bm J}  + (\mu + \gamma ) {\bm I}\right]^{-1}  ( {\bm J}^* {\bm d}  + \mu {\bm x} )   
= 
\sum_{i=1}^r \beta_i {\bm v}_i 
+
\sum_{i=r+1}^{N_m} \tilde{\beta}_i {\bm v}_i
.
\end{align}
Then using the exact rank-$p$ TSVD the difference between the third update scheme and the full update is
\begin{align}
\delta {\bm x}_3 - \delta \bar{\bm x} = 
\sum_{i=p+1}^{r} (\tilde{\beta_i} - \beta_i) {\bm v}_i
=
\sum_{i=p+1}^{r} \frac{\lambda_i^2}{\mu + \gamma} \beta_i {\bm v}_i
\end{align}
and
\begin{align}
\norm{\delta {\bm x}_3 - \delta \bar{\bm x}} &=
\left\{ \sum_{i=p+1}^{r} \left[ \frac{\lambda_i^2}{(\mu + \gamma)(\mu + \gamma + \lambda_i^2)} \right]^2   
( {\bm v}_i^*   {\bm g}_\text{obs} + \mu {\bm v}_i^*   {\bm x})^2   \right\}^{1/2}
\\
&\leq
\frac{\lambda_{p+1}^2}{(\mu + \gamma)(\mu + \gamma + \lambda_{p+1}^2)}
\norm{{\bm g}_\text{obs} + \mu {\bm x}}
.
\end{align}
For $\mu = 0$ or ${\bm x} = {\bm 0}$ we get the result given by \cite[Prop. 5.7]{voronin2015lstsq}, that is
\begin{align}
\norm{\delta {\bm x}_3 - \delta \bar{\bm x}}_2 &=
\left\{ \sum_{i=p+1}^{r} \left[ \frac{\lambda_i^2}{ (\mu + \gamma) (\mu + \gamma + \lambda_i^2)} \right]^2   
( \lambda_i {\bm u}_i^*  {\bm d})^2   \right\}^{1/2}
\\
&\leq
\frac{\lambda_{p+1}^3}{ (\mu + \gamma) (\mu + \gamma + \lambda_{p+1}^2)}  \norm{ {\bm d} }
.
\end{align}

We can similarly consider $\delta {\bm x}_2$, given the exact TSVD of ${\bm J}$. Then
\begin{align}
\delta {\bm x}_2 - \delta \bar{\bm x} = 
- \sum_{i=p+1}^{r} \beta_i {\bm v}_i
- \sum_{i=r+1}^{N_m} \tilde{\beta}_i {\bm v}_i
\end{align}
and
\begin{align}
& \norm{\delta {\bm x}_2 - \delta \bar{\bm x}}^2 = 
\sum_{i=p+1}^{r} \beta_i^2 
+ \sum_{i=r+1}^{N_m} \tilde{\beta}_i^2 
\\
&=
\sum_{i=p+1}^{r} \left( \frac{1}{\mu + \gamma + \lambda_i^2} \right)^2 ( {\bm v}_i^* [{\bm g}_\text{obs} + \mu {\bm x}] )^2
+ \sum_{i=r+1}^{N_m} \left( \frac{\mu}{\mu + \gamma } \right)^2 ( {\bm v}_i^* {\bm x} )^2 
.
\label{eq:errdx2}
\end{align}
Comparing this with the error for $\delta {\bm x}_3$, the last term in \eqref{eq:errdx2} suggests that $\delta {\bm x}_2$ and $\delta {\bm x}_1$ may have difficulties smoothing out high-frequency components that can build up in ${\bm x}$ during an inversion. This is consistent with our experience, that $\delta {\bm x}_3$ can perform better than $\delta {\bm x}_2$ and $\delta {\bm x}_1$ at lowering the regularization term at late inversion iterations. This is because $\delta {\bm x}_3$ works in the full model parameter space which also helps to lower the overall objective function \eqref{eq:lstsqproblem}. On the other hand, $\delta {\bm x}_2$ and $\delta {\bm x}_1$ are better for regulating model updates at early inversion iterations.

For the case when $\mu=0$ or ${\bm x} = {\bm 0}$ we get
\begin{align}
\norm{\delta {\bm x}_2 - \delta \bar{\bm x}} =
\left\{ \sum_{i=p+1}^{r} \left( \frac{1}{ \mu + \gamma + \lambda_i^2} \right)^2   
( \lambda_i {\bm u}_i^*  {\bm d})^2   \right\}^{1/2}
\leq \max \left( \left\{ \frac{\lambda_i}{ \mu + \gamma + \lambda_i^2}  \right\}_{i=p+1}^r \right)  \norm{ {\bm d} }
,
\end{align}
where
\begin{equation}
\left\{ \frac{\lambda_i}{ \mu + \gamma + \lambda_i^2}  \right\}_{i=p+1}^r = \left[ \frac{\lambda_{p+1}}{ \mu + \gamma + \lambda_{p+1}^2} ,\, \frac{\lambda_{p+2}}{ \mu + \gamma + \lambda_{p+2}^2} ,\, \cdots ,\, \frac{\lambda_{r}}{ \mu + \gamma + \lambda_{r}^2} \right]
.
\end{equation}
For this case \cite[Prop. 5.6]{voronin2015lstsq} gave the bound
\begin{equation} \label{eq:VoroninBound}
\norm{\delta {\bm x}_2 - \delta \bar{\bm x}} 
\leq  \frac{\lambda_{p+1}}{ \mu + \gamma + \lambda_{p+1}^2}   \norm{ {\bm d} }
.
\end{equation}
However, though, \eqref{eq:VoroninBound} holds for $\lambda_{p+1} \leq \sqrt{\mu + \gamma}$ it does not hold in general. Instead if $\lambda_{p+1} > \sqrt{\mu + \gamma}$ and $\lambda_r \leq \sqrt{\mu + \gamma}$, we may expect that
\begin{equation}
\norm{\delta {\bm x}_2 - \delta \bar{\bm x}} 
\leq \max \left( \left\{ \frac{\lambda_i}{ \mu + \gamma + \lambda_i^2}  \right\}_{i=p+1}^r \right)  \norm{ {\bm d} }
\approx  \frac{1}{2 \sqrt{\mu + \gamma}}  \norm{ {\bm d} }
.
\end{equation}
The above approximation for the bound is good when one of the discarded singular values is close to $\sqrt{\mu + \gamma}$. Otherwise, if $\lambda_r > \sqrt{\mu + \gamma}$,
\begin{equation}
\norm{\delta {\bm x}_2 - \delta \bar{\bm x}} 
\leq  \frac{\lambda_{r}}{ \mu + \gamma + \lambda_{r}^2}   \norm{ {\bm d} }
.
\end{equation}

The above error bounds are given for the case of an exact TSVD. Nevertheless, they can also be informative for reasonably good randomized low-rank approximations. The bounds for ${\bm x} = {\bm 0}$ can additionally give insight into what to expect at early inversion iterations when $\mu {\bm x}$ is small compared to the observation gradient ${\bm g}_\text{obs}$.

\section*{Acknowledgments}

The author thanks Farbod (Fred) Roosta-Khorasani for thoughtful discussions on randomized methods, and Prof. Michael J. O'Sullivan, Oliver J. Maclaren, Ruanui Nicholson and N. Benjamin Erichson for helpful discussions and feedback on the manuscript.

\bibliographystyle{siamplain}
\bibliography{references}
\end{document}